\theoremstyle{plain}
\newtheorem{theo}{Theorem}[section]
\newtheorem{lem}[theo]{Lemma}
\newtheorem{prop}[theo]{Proposition}
\theoremstyle{definition}
\newtheorem{definition}[theo]{Definition}
\theoremstyle{remark}
\newtheorem{rem}[theo]{Remark}
\numberwithin{equation}{section}
\newcommand{\C}{\mathbb{C}}
\newcommand{\R}{\mathbb{R}}
\newcommand{\N}{\mathbb{N}}
\newcommand{\M}{\mathbb{M}}
\newcommand{\divrg}{\textrm{div}\,}
\title{Stable determination of an inclusion in an elastic body by boundary measurements
\thanks{The first and the fourth authors are supported by Universit\`a degli Studi di Trieste FRA 2012 `Problemi Inversi',
the second author is supported by MIUR, PRIN grant no. 2008ZPC95C\_002,
the third author is partially supported by the Carlos III
University of Madrid-Banco de Santander Chairs of Excellence
Programme for the 2013-2014 Academic Year.}
}
\author{Giovanni Alessandrini\thanks{Dipartimento di Matematica e Geoscienze,
Universit\`a degli Studi di Trieste, via Valerio 12/1, 34127
Trieste, Italy. E-mail: \textsf{alessang@units.it}}, Michele Di
Cristo\thanks{Dipartimento di Matematica Francesco Brioschi,
Politecnico di Milano, Piazza Leonardo da Vinci 32, 20133 Milano,
Italy. E-mail: \textsf{michele.dicristo@polimi.it}}, \\ Antonino
Morassi\thanks{Dipartimento di Ingegneria Civile e Architettura,
Universit\`a degli Studi di Udine, via Cotonificio 114, 33100
Udine, Italy. E-mail: \textsf{antonino.morassi@uniud.it}} \  and
Edi Rosset\thanks{Dipartimento di Matematica e Geoscienze,
Universit\`a degli Studi di Trieste, via Valerio 12/1, 34127
Trieste, Italy. E-mail: \textsf{rossedi@univ.trieste.it}}}
\date{}
\begin{document}

\maketitle

\begin{abstract}

We consider the inverse problem of identifying an unknown
inclusion contained in an elastic body by the
Dirichlet-to-Neumann map. The body is made by linearly elastic,
homogeneous and isotropic material. The Lam\'{e} moduli of the
inclusion are constant and different {}from those of the
surrounding material. Under mild a-priori regularity assumptions
on the unknown defect, we establish a logarithmic stability
estimate.
Main tools are
propagation of smallness arguments based on three-spheres
inequality for solutions to the Lam\'{e} system and a refined asymptotic analysis
 of the fundamental solution of the Lam\'{e} system
in presence of an inclusion which shows surprising features.

\end{abstract}

\centerline{}

\section{Introduction}
\label{SecIntroduction}

This paper deals with the inverse problem of determining an
elastic inclusion $D$ contained in an elastic body $\Omega$ by
measuring displacements and tractions at the boundary $\partial
\Omega$. More precisely, let $\Omega$ be a bounded domain in $
\R^3$ and let $D$ be an open set contained in $\Omega$.
Assume that both the body $\Omega$ and the inclusion $D$ are made
by different homogeneous, isotropic, elastic materials, with
Lam\'{e} moduli $\mu$, $\lambda$ and $\mu^I$, $\lambda^I$,
respectively, satisfying the strong convexity conditions $\mu>0$,
$2\mu+3\lambda>0$, $\mu^I>0$, $2\mu^I+3\lambda^I>0$. For a given
$g \in H^{ \frac{1}{2}}(\partial \Omega)$, consider the weak
solution $u\in H^1(\Omega)$ to the Dirichlet problem
\begin{center}
\( {\displaystyle \left\{
\begin{array}{lr}
  \divrg((\C + (\C^I - \C)\chi_D)\nabla u)=0,
  & \hbox{in}\ \Omega,
    \vspace{0.25em}\\
  u=g, & \hbox{on}\ \partial \Omega,
\end{array}
\right. } \) \vskip -4.4em
\begin{eqnarray}
& & \label{eq:Intro.1}\\
& & \label{eq:Intro.2}
\end{eqnarray}
\end{center}
where $\C$, $\C^I$ are the elastic tensors of the body and of the
inclusion, respectively, and $\chi_D$ is the characteristic
function of $D$. We denote by $\Lambda_D : H^{ \frac{1}{2}}
\rightarrow H^{ - \frac{1}{2}}$ the Dirichlet-to-Neumann map
associated to the problem \eqref{eq:Intro.1}--\eqref{eq:Intro.2},
that is the operator which maps the Dirichlet data $u|_{\partial
\Omega}$ onto the corresponding Neumann data $((\C + (\C^I - \C)\chi_D)\nabla u)\nu$, taken in the weak sense (see \eqref{eq:3.4}, \eqref{eq:3.4bis} below),
where $\nu$ is the outer unit normal to
$\partial\Omega$. The inverse problem we are considering here is to
determine $D$ when $\Lambda_D$ is given.

This problem is one of the fundamental issues of inverse problems
in linear elasticity. In fact, the physical problem described by
equations \eqref{eq:Intro.1}--\eqref{eq:Intro.2} corresponds to a
class of diagnostic problems very common in practical
applications, in which the inclusion is constituted by a faulty or
damaged portion of the elastic body and only the exterior boundary
of the experimental sample is accessible to measurements. The
hypothesis of piecewise constant coefficients is also realistic
and describes practical situations in which there is a jump of the
elastic coefficients at the interface of the inclusion. However,
despite the simplicity of its formulation and the relevant
implications in practical applications, few general results on
this inverse problem are known.

The inverse problem of determining an elastic inclusion could be
framed as a special case of determination of the Lam\'{e} moduli
{}from the Dirichlet-to-Neumann map. In this case, however,  most
of the results currently available concern only regular elastic
coefficients. In \cite{NU93}, Nakamura and Uhlmann
established that in two dimensions the Lam\'{e} moduli are
uniquely determined by the Dirichlet-to-Neumann map, assuming that
they are smooth (e.g., $C^\infty(\overline{\Omega})$) and
sufficiently close to a pair of positive constants. For the
three-dimensional case, the uniqueness for both Lam\'{e} moduli
was proved in \cite{NU94}, \cite{ER02}, \cite{NU03}, provided that
they are smooth and that the shear modulus is close to a positive
constant. Some of the above uniqueness results have been
proved in the case of partial Cauchy data, see \cite{IUY12} for
more details.
Concerning results for less regular coefficients, quite recently, the uniqueness and Lipschitz stability in the case of discontinuous piecewise constant Lam\'{e} tensors, with unknown constants, but with a known decomposition of the domain, has been achieved in \cite{BFV13}.

An alternative approach is the one based on
identification of an unknown boundary, namely the interface
$\partial D$ of the inclusion, by measurements taken on $\partial
\Omega$. The extreme cases of a cavity or of a rigid inclusion in an
isotropic elastic body were considered in \cite{MR04} and in
\cite{MR09}, respectively. For this class of inverse problems,
under mild regularity assumptions on the unknown interface,
Morassi and Rosset established a stability estimate of log-log
type {}from a single pair of Cauchy data. For the elastic
inclusion even the uniqueness question {}from a finite number of
boundary measurements, not to mention stability, remains a largely
open issue.

In connection with the problems discussed above, we wish to
mention the reconstruction issue that has drawn a lot of attention
in recent years.

Ikehata developed in \cite{Ike02} the so-called probe method for
reconstructing inclusions in elastic bodies by means of singular
or fundamental solutions. A key ingredient of the method is a
Runge type approximation theorem, which is useful to guarantee the
existence of an approximating sequence to the singular solution.
The basic idea of this method comes {}from Isakov's fundamental
paper \cite{Is88}, in which the uniqueness of the determination of
an inclusion in an electrical conductor {}from the
Dirichlet-to-Neumann map was proved.
See also a corresponding result of uniqueness for elastic inclusions \cite{INT99}.
Unfortunately, Runge type
approximation theorems are  typically based on nonconstructive
arguments and, therefore, they are not suitable for stability estimates.
Still along this line of research, interesting results for the
reconstruction of an unknown inclusion in two dimensions were
obtained by Ikehata in \cite{Ike04}.

Uhlmann and Wang proposed in \cite{UW08} a method for constructing
complex geometrical optics (CGO) solutions with general phases for
various systems with Laplacian principal part, which include the
inhomogeneous Lam\'{e} system in the plane. In particular, in
\cite{UWW09}, the authors provided a reconstruction algorithm to
the inverse problem of determining $D$ {}from $\Lambda_D$. The
idea is to probe the medium with CGO solutions having
polynomial-type phase functions. The method works for bounded or
unbounded planar regions, made by inhomogeneous Lam\'{e} material,
and does not need a Runge type theorem. Using the CGO solutions,
the authors develop an algorithm to reconstruct the exact shape of
a large class of inclusions, including star-shaped domains.
Numerical implementation of the method gave encouraging results.
Extension to three dimensions, however, does not seem to be  easy as
this method heavily relies on the use of conformal mappings.

In this paper we prove, under suitable mild a-priori assumptions
on the regularity and on the topology of $D$, a continuous
dependence of $D$ {}from $\Lambda_D$ with a modulus of continuity
of logarithmic type. Our proof is inspired by the paper by
Alessandrini and Di Cristo \cite{ADiC05}, in which a logarithmic
stability estimate for the corresponding problem in impedance
tomography, which involves a single scalar elliptic equation with
piecewise constant coefficient, was obtained. In this direction,
we would like also to mention the recent papers by Di Cristo and
Vessella \cite{DiCV10}, \cite{DiCV11}, for  analogous results for the stable
determination of a time varying inclusion within a thermal
conductor. The aforementioned papers are based on quantitative
estimates of unique continuation and on accurate study of the
asymptotic behavior of fundamental solutions when the singularity
gets close to the unknown interface.

The approach we follow here goes along the same line of reasoning but there are several
steps which present new difficulties and in which we have been forced
to introduce novel arguments. Let us outline the main steps of the proof and the new challenges that we have encountered.

Consider two possible inclusions $D_1$, $D_2$ and their corresponding Dirichlet-to-Neumann
maps $\Lambda_{D_1}$, $\Lambda_{D_2}$. The main steps are the following.

i) We introduce the fundamental solutions $\Gamma^{D_1}$, $\Gamma^{D_2}$ for the Lam\'{e} system \eqref{eq:Intro.1} in the full space when $D=D_1,D_2$ respectively.

ii) We show that $(\Gamma^{D_1} - \Gamma^{D_2})(y,w)$ can be dominated linearly  by
$\Lambda_{D_1} - \Lambda_{D_2}$ when $y,w$ are outside of $\Omega$ (see \eqref{eq:21.0}, \eqref{eq:24.1}).

iii) We propagate the smallness of $(\Gamma^{D_1} - \Gamma^{D_2})(y,w)$ as $y,w$ are moved inside of $\Omega$ in the connected component $\mathcal{G}$ of $\R^3 \setminus
(\overline{D_1 \cup D_2})$ which contains $\R^3 \setminus \overline{\Omega}$.

iv) We examine the asymptotics of $(\Gamma^{D_1} - \Gamma^{D_2})(y,w)$ as $y,w$ approach
to a point $P$ of $\partial D_1\setminus \overline{D_2}$ (or $\partial D_2\setminus \overline{D_1}$).

v)  We evaluate the distance between $D_1$ and $D_2$ by matching the smallness estimates of Step
iii) with the blowup asymptotics of Step iv).

Let us now illustrate with some more details the character of such steps.

Step i) is based on a-priori regularity estimates of solutions of the Lam\'{e} system
with piecewise constant Lam\'{e} moduli. For this purpose we appeal to the theory
of existence developed by Hofmann and Kim \cite{HK07} and to the a-priori bounds due to Li and
Nirenberg \cite{LN03}. Details can be found in Section \ref{funda}.

Step ii) is based on a version of the so-called Alessandrini's identity, Lemma \ref{lem:13.1}.

Step iii) contains a complication of geometrical/topological character due to the fact that quantitative estimates of unique continuation can be obtained only in sets which are not only
topologically connected, but also whose connectedness is expressed in tight quantitative terms.
Namely, pairs of points need to be connected by chains of balls of controlled size and number, in such a way that the iteration of the three-spheres-inequality gives rise to controlled constants
and moduli of continuity  in the estimates of propagation of smallness.

Note that $D$ is contained in $\Omega$ with no constraint on the distance between
$\overline{D}$ and $\partial\Omega$, actually we may even admit that
$\partial D\cap \partial\Omega\neq\emptyset$.

Indeed, in previous studies, it was assumed $\hbox{dist}(\overline{D},\partial\Omega)\geq const>0$,
here we show that this requirement is unnecessary since for our purposes the D--N map is estimated only for solutions to \eqref{eq:Intro.1} defined on a domain $\widetilde{\Omega}$
strictly larger than $\Omega$. See Section \ref{proof_upperbound}, Step 1.

Step iv) is the one in which the difference between the scalar conductivity equation and the vector Lam\'{e} system becomes more evident and (in our view) presents the most challenging and interesting features.

In fact, in the scalar case it was possible to pick $y=w$ and prove that
$(\Gamma^{D_1} - \Gamma^{D_2})(y,y)$ blows up as $y$ tends nontangentially
to  $P\in\partial D_1\setminus \overline{D_2}$, and to evaluate quantitatively the
blowup rate. In the present case the situation is more complicated for a number of reasons. First of all the fundamental solutions are matrix valued (not scalar) functions and, therefore, it is crucial to understand which of the entries of $\Gamma^{D_1} - \Gamma^{D_2}$ has the desired
blowup behavior. Second, we are assuming that either $\mu^I\neq\mu$ or $\lambda^I\neq\lambda$
with \emph{no order condition} between such parameters. Hence, we cannot expect, in general, that the difference matrix $\Gamma^{D_1} - \Gamma^{D_2}$ may satisfy any positivity condition. For these reasons we have chosen to examine each diagonal entry of $\Gamma^{D_1} - \Gamma^{D_2}$
separately.
Similarly to the scalar case, we can show that, as $y, w$ tend to $P\in\partial D_1\setminus \overline{D_2}$, $(\Gamma^{D_1} - \Gamma^{D_2})(y,w)$ has, in a suitable reference frame, the same asymptotic behavior of $(\Gamma^{+} - \Gamma)(y,w)$. Here $\Gamma$ is the standard Kelvin fundamental solution with Lam\'{e} moduli $\mu$, $\lambda$ and $\Gamma^+$ is the fundamental solution $\Gamma^D$ when $D$ is replaced by the upper half plane $\{x_3>0\}$.

We can take advantage of the fact that $\Gamma^+$ is explicitly known, in fact its expression,
although complicated, was calculated by Rongved \cite{Ron55} in 1955. With the aid of Rongved's formulas we have been able to estimate the blowup rate of $(\Gamma^{+} - \Gamma)_{ii}(y,w)$, $i=1,2,3$, as $y, w\rightarrow 0$ vertically along the line $\{x_1=x_2=0\}$ for suitable choices of $y$, $w$. A notable fact is that we are obliged to pick very specific choices of $y$, $w$, $w\neq y$ (see Proposition \ref{prop:62.1}). In fact we have found explicit examples of moduli $(\lambda,\mu)\neq(\lambda^I,\mu^I)$ for which $(\Gamma^{+} - \Gamma)_{ii}(y,y)=0$. We emphasize
that such a precise analysis has been possible on the grounds of the explicit (algebraic)
character of $\Gamma^{+}$ (see Section \ref{Rongved}).

The organization of the paper is as follows. In Section
\ref{SecNot-and-info} we introduce some notation and the a-priori
information needed for our stability result. The main result of stability, Theorem
\ref{theo:4.1}, is stated in Section \ref{Formulation}. Section \ref{topo} is
devoted to some technical details of topological-metric character related to the evaluation
of the distance between the inclusions and to estimates of propagation of smallness.
The main properties of the fundamental solution of the Lam\'{e} system with
discontinuous coefficients are presented in Section \ref{funda}.
In Section \ref{proof_main} we state two auxiliary estimates, Theorem \ref{theo:1.23} and
Theorem \ref{theo:55.1}, and in their basis we prove the main
Theorem \ref{theo:4.1}. Theorem
\ref{theo:1.23} is proven in the following Section \ref{proof_upperbound}.
Section \ref{asymptotic} contains evaluations of the asymptotic behaviour of the fundamental
solution $\Gamma^D$, in preparation of the proof of Theorem \ref{theo:55.1}, which is completed in
Section \ref{proof_lowerbound}.
Section
\ref{Rongved} is devoted to the analysis of Rongved's
fundamental solution $\Gamma^{+}$. We also investigate the peculiar behaviour of $\Gamma^+ -\Gamma$ (which shows remarkable differences with the scalar case of the conductivity equations) by exploring explicit examples of material parameters $\mu$, $\lambda$ and $\mu^I$, $\lambda^I$.
Finally, Section \ref{technical} contains the proof of our main topological-metric lemma, Lemma 4.2.

\section{Notation and a-priori information}
\label{SecNot-and-info}

\subsection{Notation and definitions}

Let us denote by $\R^{3}_+= \{x\in \R^3 \ | \ x_3>0 \}$ and
$\R^3_-= \{x\in \R^3 \ | \ x_3<0 \}$. Given $x\in{\R}^3$, we
shall denote $x=(x',x_3)$, where $x'=(x_1,x_2)\in{\R}^{2}$,
$x_3\in{\R}$. Given $x \in \R^3$, $r>0$, we shall use the
following notation for balls and cylinders.
\begin{equation*}
    B_r(x)=\{y \in \R^3 \ \mid \ |y-x|<r\}, \quad B_r=B_r(0),
\end{equation*}
\begin{equation*}
    B'_r(x')=\{y' \in \R^2 \ \mid \ |y'-x'|<r\}, \quad B'_r=B'_r(0),
\end{equation*}
\begin{equation*}
    Q_{a,b}(x) = \{ (y', y_3) | \ |y'-x'|<a, \ |y_3-x_3|<b \},
    \quad Q_{a,b}=Q_{a,b}(0),
\end{equation*}
\begin{equation*}
    Q_{a,b}(x)^+ = \{ (y', y_3) | \ |y'-x'|<a, \ 0<y_3-x_3<b \},
    \quad Q_{a,b}^+=Q_{a,b}^+(0).
\end{equation*}

\begin{definition}
  \label{def:2.1} (${C}^{k,\alpha}$ regularity)
Let $E$ be a domain in ${\R}^{3}$. Given $k$,
$\alpha$, $k\in \N$, $0<\alpha\leq 1$, we say that $E$
is of \textit{class ${C}^{k,\alpha}$ with
constants $\rho_{0}$, $M_{0}>0$}, if, for any $P \in \partial E$, there
exists a rigid transformation of coordinates under which we have
$P=0$ and
\begin{equation*}
  E \cap B_{\rho_{0}}(0)=\{x \in B_{\rho_{0}}(0)\quad | \quad
x_{3}>\varphi(x')
  \},
\end{equation*}
where $\varphi$ is a ${C}^{k,\alpha}$ function on $B'_{\rho_{0}}$
satisfying
\begin{equation*}
\varphi(0)=0,
\end{equation*}
\begin{equation*}
\nabla \varphi (0)=0, \quad \hbox{when } k \geq 1,
\end{equation*}
\begin{equation*}
\|\varphi\|_{{C}^{k,\alpha}(B'_{\rho_{0}}(0))} \leq M_{0}\rho_{0}.
\end{equation*}

\medskip
\noindent When $k=0$, $\alpha=1$, we also say that $E$ is of
\textit{Lipschitz class with constants $\rho_{0}$, $M_{0}$}.
\end{definition}

\begin{rem}
  \label{rem:2.1}
  We use the convention to normalize all norms in such a way that their
  terms are dimensionally homogeneous and coincide with the
  standard definition when the dimensional parameter equals one.
  For instance, the norm appearing above is meant as follows
\begin{equation*}
  \|\varphi\|_{{C}^{k,\alpha}(B'_{\rho_{0}}(0))} =
  \sum_{i=0}^k \rho_0^i
  \|\nabla^i\varphi\|_{{L}^{\infty}(B'_{\rho_{0}}(0))}+
  \rho_0^{k+\alpha}|\nabla^k\varphi|_{\alpha, B'_{\rho_0}(0)},
\end{equation*}
where
\begin{equation*}
|\nabla^k\varphi|_{\alpha, B'_{\rho_0}(0)}= \sup_
{\overset{\scriptstyle x', \ y'\in B'_{\rho_0}(0)}{\scriptstyle
x'\neq y'}} \frac{|\nabla^k\varphi(x')-\nabla^k\varphi(y')|}
{|x'-y'|^\alpha}.
\end{equation*}

Similarly, for a vector function $u: \Omega \subset \R^3
\rightarrow \R^3$, we set
\begin{equation*}
\|u\|_{H^1(\Omega, \R^3)}=\left(\int_\Omega u^2
+\rho_0^2\int_\Omega|\nabla u|^2\right)^{\frac{1}{2}},
\end{equation*}
and so on for boundary and trace norms such as
$\|\cdot\|_{H^{\frac{1}{2}}(\partial\Omega, \R^3)}$,
$\|\cdot\|_{H^{-\frac{1}{2}}(\partial\Omega, \R^3)}$.
\end{rem}

For any $U \subset \R^3$ and for any $r>0$, we denote
\begin{equation}
  \label{eq:2.int_env}
  U_{r}=\{x \in U \mid \textrm{dist}(x,\partial U)>r
  \},
\end{equation}
\begin{equation}
  \label{eq:2.ext_env}
  U^{r}=\{x \in \R^3 \mid \textrm{dist}(x, U )<r
  \}.
\end{equation}

\medskip
We denote by $\M^{m\times n}$ the space of $m \times n$ real
valued matrices and by ${\cal L} (X, Y)$ the space of bounded
linear operators between Banach spaces $X$ and $Y$. When $m=n$, we
shall also denote $\M^n=\M^{n\times n}$.

For every pair of real $n$-vectors $a$ and $b$, we denote by $a
\otimes b$ the $n \times n$ matrix with entries
\begin{equation}
  \label{eq:diade}
  (a \otimes b)_{ij} = a_i b_j, \quad i,j=1,...,n.
\end{equation}

For every $3 \times 3$ matrices $A$, $B$ and for every $\C\in{\cal
L} ({\M}^{3}, {\M}^{3})$, we use the following notation:
\begin{equation}
  \label{eq:2.notation_1}
  ({\C}A)_{ij} = \sum_{k,l=1}^{3} C_{ijkl}A_{kl},
\end{equation}
\begin{equation}
  \label{eq:2.notation_2}
  A \cdot B = \sum_{i,j=1}^{3} A_{ij}B_{ij},
\end{equation}
\begin{equation}
  \label{eq:2notation_3}
  |A|= (A \cdot A)^{\frac {1} {2}},
\end{equation}
where $C_{ijkl}$, $A_{ij}$ and $B_{ij}$ are the entries of $\C$,
$A$ and $B$ respectively.

Given two bounded closed sets $A,B\subset \R^3$, let us recall that the Hausdorff distance $d_H(A,B)$ is defined as
\begin{equation*}
  \label{eq:Hausdorff}
  d_H(A,B)= \max\{\max_{x\in A}d(x,B), \max_{x\in B}d(x,A)\}
\end{equation*}

\subsection{A-priori information}
\label{SubsecInfo}

Throughout the paper, we use the following a-priori assumptions.

i) \textit{Domain}

Let $\Omega$ be a bounded domain in $\R^3$ such that
\begin{equation}
  \label{eq:1.0}
  \R^3\setminus \overline{\Omega}\ \hbox{is connected},
\end{equation}
\begin{equation}
  \label{eq:1.1}
  |\Omega| \leq M_1 \rho_0^3,
\end{equation}
\begin{equation}
  \label{eq:1.1bis}
  \Omega \ \hbox{is of class } C^{1,\alpha}, \ \hbox{with
  constants } \ \rho_0, \ M_0,
\end{equation}
where $\rho_0$, $M_0$, $M_1$ are given positive constants, and
$0<\alpha < 1$.

ii) \textit{Inclusion}

Let $D$ be a domain contained in $\Omega$ satisfying
\begin{equation}
  \label{eq:1.2}
  \R^3 \setminus \overline{D} \ \hbox{is connected},
\end{equation}
\begin{equation}
  \label{eq:1.4}
    D \ \hbox{is of class } C^{1,\alpha},\ \hbox{with
  constants } \ \rho_0, \ M_0,
\end{equation}
where $\rho_0$, $M_0$ are given positive constants, and $0<\alpha
< 1$.

iii) \textit{Material}

The body $\Omega$ is assumed to be made of linearly elastic,
isotropic and homogeneous material, with elastic tensor $\C$ of
components
\begin{equation}
  \label{eq:2.1}
    C_{ijkl}=\lambda \delta_{ij}\delta_{kl} + \mu
    (\delta_{ki}\delta_{lj}+\delta_{li}\delta_{kj}),
\end{equation}
where $\delta_{ij}$ is the Kronecker's delta. The \emph{constant}
Lam\'{e} moduli $\lambda$, $\mu$ satisfy the strong convexity
conditions
\begin{equation}
  \label{eq:2.2}
    \mu \geq \alpha_0, \quad 2\mu+3\lambda \geq \gamma_0,
\end{equation}
where $\alpha_0>0$, $\gamma_0 >0$ are given constants.
We shall also assume upper bounds on the Lam\'{e}  moduli
\begin{equation}
  \label{eq:2.2upper}
    \mu \leq \overline{\mu}, \quad \lambda \leq \overline{\lambda},
\end{equation}
where also $\overline{\mu}>0$, $\overline{\lambda}\in \R$ are known quantities.
In some
points of our analysis, we will express the constitutive equation
\eqref{eq:2.1} in terms of $\mu$ and of Poisson's ratio $\nu$,
instead of the Lam\'{e} moduli $\mu$, $\lambda$. Recalling that
\begin{equation}
  \label{eq:2.2BIS}
    \nu = \frac{\lambda}{2(\lambda + \mu)},
\end{equation}
by \eqref{eq:2.2}, \eqref{eq:2.2upper} we have
\begin{equation}
  \label{eq:2.2TER}
    -1<\nu_0\leq \nu \leq \nu_1< \frac{1}{2},
\end{equation}
where $\nu_0$, $\nu_1$ only depend on $\alpha_0$, $\gamma_0$,
$\overline{\mu}$, $\overline{\lambda}$. Let us notice that
\eqref{eq:2.1} trivially implies that
\begin{equation}
  \label{eq:2.2bis}
    C_{ijkl}=C_{klij}=C_{lkij}, \quad \hbox{ }{i,j,k,l=1,2,3}.
\end{equation}
We recall that the first equality in \eqref{eq:2.2bis} is usually named as the major symmetry of the tensor $\C$, whereas the second equality is called the minor symmetry.

Also we note that \eqref{eq:2.2} is equivalent to
\begin{equation}
  \label{eq:2.3}
    \C A \cdot A \geq \xi_0 |A|^2
\end{equation}
for every $3 \times 3$ symmetric matrix $A$, where $\xi_0=\min \{
2\alpha_0, \gamma_0\}$.

Similarly, the inclusion $D$ is made of isotropic homogeneous
material having elasticity tensor $\C^I$, with \emph{constant}
Lam\'{e} moduli $\lambda^I$, $\mu^I$ satisfying the conditions
\eqref{eq:2.2}, \eqref{eq:2.2upper} and such that
\begin{equation}
  \label{eq:3.1}
   (\lambda-\lambda^I)^2+ (\mu-\mu^I)^2\geq \eta_0^2>0,
\end{equation}
for a given constant $\eta_0>0$.

\medskip

In what follows we shall refer to the constants $M_0$, $\alpha$, $M_1$,
$\alpha_0$, $\gamma_0$, $\overline{\mu}$, $\overline{\lambda}$, $\eta_0$ as to the
\emph{a-priori data}.

\medskip

Observe that, in view of \eqref{eq:2.2BIS} and of the a-priori bounds on the Lam\'{e} moduli,
{}from \eqref{eq:3.1} it also follows
\begin{equation}
  \label{eq:3.1bis}
   (\nu-\nu^I)^2+ (\mu-\mu^I)^2\geq C\eta_0^2>0,
\end{equation}
where $C$ only depends on $\alpha_0$, $\gamma_0$, $\overline{\mu}$, $\overline{\lambda}$.

Finally, note that the jump condition \eqref{eq:3.1} does \emph{not} imply any kind of monotonicity relation between $\C$ and $\C^I$.

\section{Formulation of the inverse problem and stability result}
\label{Formulation}

For any $f \in H^{1/2}(\partial \Omega)$, let $u \in H^1(\Omega)$
be the weak solution to the Dirichlet problem
\begin{center}
\( {\displaystyle \left\{
\begin{array}{lr}
  \divrg((\C + (\C^I - \C)\chi_D)\nabla u)=0,
  & \hbox{in}\ \Omega,
    \vspace{0.25em}\\
  u=f, & \hbox{on}\ \partial \Omega
\end{array}
\right. } \) \vskip -4.4em
\begin{eqnarray}
& & \label{eq:3.2}\\
& & \label{eq:3.3}
\end{eqnarray}
\end{center}
where $\chi_D$ is the characteristic function of $D$.

Let us recall that the so-called Dirichlet-to-Neumann map
\begin{equation}
  \label{eq:3.4}
   \Lambda_D: H^{1/2}(\partial \Omega) \rightarrow
   H^{-1/2}(\partial \Omega),
\end{equation}
is defined in the weak form by
\begin{equation}
  \label{eq:3.4bis}
   <\Lambda_D f, v|_{\partial \Omega}>=\int_\Omega (\C + (\C^I - \C)\chi_D)
   \nabla u\cdot \nabla v,
\end{equation}
for every $v\in H^1(\Omega)$.

In what follows it will be convenient to write, with a slight, but customary, abuse of notation,
\begin{equation*}
   <\Lambda_D f, v|_{\partial \Omega}>=\int_{\partial\Omega} v \Lambda_D f.
\end{equation*}

The inverse problem we are interested in consists in recovering
the inclusion $D$ {}from the knowledge of the map $\Lambda_D$ and,
more precisely, we want to prove a stability estimate. Our main
result is the following.

\begin{theo}
   \label{theo:4.1}

Let $\Omega \subset \R^3$ satisfy
\eqref{eq:1.1}--\eqref{eq:1.1bis} and let $D_1$, $D_2$ be two
inclusions contained in $\Omega$ satisfying
\eqref{eq:1.2}--\eqref{eq:1.4}. Let $\C$ and $\C^I$ be the
constant elastic tensors of the material of $\Omega$ and of the
inclusions $D_i$, $i=1,2$, respectively, where $\C$ and $\C^I$
satisfy \eqref{eq:2.1}--\eqref{eq:2.2upper} and \eqref{eq:3.1}. If, for
some $\epsilon$, $0<\epsilon<1$,
\begin{equation}
  \label{eq:4.1}
   \| \Lambda_{D_1}-\Lambda_{D_2}\|_{\mathcal{L}(H^{1/2}(\partial \Omega), H^{-1/2}(\partial
   \Omega))}\leq \frac{\epsilon}{\rho_0},
\end{equation}
then
\begin{equation}
  \label{eq:4.2}
   d_H(\partial D_1, \partial D_2) \leq \rho_0\omega(\epsilon),
\end{equation}
where $\omega$ is an
increasing function on $[0,+\infty)$ satisfying
\begin{equation}
  \label{eq:4.3}
   \omega(t) \leq C |\log t |^{-\eta}, \ \hbox{for every } \
   0<t<1,
\end{equation}
where $C>0$ and $\eta$, $0<\eta\leq 1$, are constants only
depending on the a-priori data.
\end{theo}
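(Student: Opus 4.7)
The plan is to follow the five-step program laid out in the introduction, adapting the quantitative-stability scheme of Alessandrini--Di Cristo for the scalar conductivity case to the vector-valued Lam\'e setting. Let $\mathcal{G}$ denote the connected component of $\R^3\setminus(\overline{D_1\cup D_2})$ containing $\R^3\setminus\overline{\Omega}$, and let $\Gamma^{D_j}$ be the matrix-valued fundamental solution of the Lam\'e operator with coefficients $\C+(\C^I-\C)\chi_{D_j}$, whose existence and regularity are furnished by the tools recalled in Section \ref{funda}. The strategy is to compare $\Gamma^{D_1}$ and $\Gamma^{D_2}$ at points $(y,w)$ placed first outside $\Omega$, then propagated through $\mathcal{G}$, and finally pushed to a point $P$ of $\partial D_1\setminus\overline{D_2}$ (or symmetrically) that essentially realizes $d_H(\partial D_1,\partial D_2)$.

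First I would establish an Alessandrini-type identity expressing $(\Gamma^{D_1}-\Gamma^{D_2})(y,w)$, for $y,w$ outside $\Omega$, as a duality pairing $\langle(\Lambda_{D_1}-\Lambda_{D_2})\Gamma(\cdot,y)|_{\partial\Omega},\Gamma(\cdot,w)|_{\partial\Omega}\rangle$ involving the traces of the free-space Kelvin solution. Combined with \eqref{eq:4.1} and elementary bounds on $\Gamma$, this yields an initial upper bound controlled linearly by $\epsilon/\rho_0$ for points lying in an exterior shell. I would then propagate this smallness from outside a slightly enlarged domain $\widetilde\Omega\supset\Omega$ into the interior of $\mathcal{G}$, by iterating the three-spheres inequality for the Lam\'e system in each variable $y$ and $w$ separately. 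The geometric complication, addressed by the topological-metric lemma of Section \ref{topo}, is to build chains of balls of controlled radius and cardinality inside $\mathcal{G}$, delicate precisely because one does not assume any positive distance between $\overline{D_j}$ and $\partial\Omega$. The outcome is the logarithmic smallness estimate formalized as Theorem \ref{theo:1.23}.

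The heart of the matter, and what I expect to be the main obstacle, is Step iv): producing a matching quantitative lower bound for $|(\Gamma^{D_1}-\Gamma^{D_2})(y,w)|$ as $(y,w)$ approaches $P$. My plan is to flatten $\partial D_1$ near $P$, freeze the coefficients, and show that the leading singular contribution of $\Gamma^{D_1}-\Gamma^{D_2}$ near $P$ coincides with that of $\Gamma^+-\Gamma$, where $\Gamma^+$ is Rongved's explicit half-space fundamental solution. Here the vector case is genuinely harder than the scalar one: since no ordering between $(\lambda,\mu)$ and $(\lambda^I,\mu^I)$ is assumed, $\Gamma^{D_1}-\Gamma^{D_2}$ need not be sign-definite, and the explicit computations of Section \ref{Rongved} reveal that certain diagonal entries $(\Gamma^+-\Gamma)_{ii}(y,y)$ can vanish identically for some admissible parameters. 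This obliges one to examine the diagonal entry by entry and to choose two \emph{distinct} sequences $y_t\neq w_t$ approaching $P$ along the inner normal, in such a way that at least one diagonal entry admits a blow-up estimate of the form $|(\Gamma^{D_1}-\Gamma^{D_2})_{ii}(y_t,w_t)|\geq c\,t^{-\beta}$ with $\beta>0$ depending only on the a-priori data. This is the content of Theorem \ref{theo:55.1}.

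The proof of Theorem \ref{theo:4.1} is then completed by matching the two estimates. Denoting $d=d_H(\partial D_1,\partial D_2)$ and picking $(y_t,w_t)$ with $t$ of order $d$ along the normal at a point $P$ realizing the Hausdorff distance, the smallness upper bound of Theorem \ref{theo:1.23} at $(y_t,w_t)$ reads $|\Gamma^{D_1}-\Gamma^{D_2}|\leq C\,\omega(\epsilon,t)$ with $\omega$ of iterated logarithmic type, while Theorem \ref{theo:55.1} forces $|\Gamma^{D_1}-\Gamma^{D_2}|\geq c\,t^{-\beta}$; combining these and optimizing in $t$ yields $d\leq \rho_0 C|\log\epsilon|^{-\eta}$, which is exactly \eqref{eq:4.2}--\eqref{eq:4.3}. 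Everything else — the a-priori $L^\infty$ estimates needed to compare $\Gamma^{D_j}$ with the frozen-coefficient model, the construction of the enlarged $\widetilde\Omega$ allowing $\partial D\cap\partial\Omega\neq\emptyset$, and the explicit Rongved computation — serves this final matching argument.
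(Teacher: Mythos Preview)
Your proposal is correct and follows essentially the same five-step approach as the paper, culminating in the matching of Theorem \ref{theo:1.23} (upper bound) against Theorem \ref{theo:55.1} (lower bound $C/h$) at the scale $h=\overline{h}\rho$. One small correction: the Alessandrini identity actually used (see \eqref{eq:20.4}) pairs the traces of the perturbed fundamental solutions $\Gamma^{D_1}(\cdot,y)$ and $\Gamma^{D_2}(\cdot,w)$ rather than the free-space Kelvin solution, though this does not affect the argument since both obey the same pointwise bounds of Proposition \ref{prop:14.1}.
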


\begin{rem}
  \label{rem:localmap}
  In the case when $D_1$, $D_2$ are at a prescribed positive distance {}from $\partial\Omega$, it is also  possible to obtain a result analogous to the above Theorem when the
  Dirichlet-to-Neumann maps $\Lambda_{D_1}$, $\Lambda_{D_2}$ are replaced with local maps.
  For instance, fixing $Q\in\partial\Omega$ and given $\rho_1>0$, denoting
  $\Sigma=\partial\Omega\cap B_{\rho_1}(Q)$, we introduce
  \begin{equation*}
  H_{co}^{1/2}(\Sigma)=\{g\in H^{1/2}(\partial\Omega)\ |\ \hbox{supp}\ g\subset\subset\Sigma\}
  \end{equation*}
   and define
   \begin{equation*}
   \Lambda^\Sigma_{D_i}: H_{co}^{1/2}(\Sigma)\rightarrow(H_{co}^{1/2}(\Sigma))^*\subset
  H^{-1/2}(\partial\Omega)
  \end{equation*}
  as the restriction of $\Lambda_{D_i}$ to $H_{co}^{1/2}(\Sigma)$.
  Thus, replacing the assumption \eqref{eq:4.1} with
  \begin{equation*}
  \| \Lambda_{D_1}^\Sigma-\Lambda_{D_2}^\Sigma\|_{\mathcal{L}\left(H^{1/2}_{co}(\Sigma),
  \left(H^{1/2}_{co}(\Sigma)\right)^*\right)}\leq \frac{\epsilon}{\rho_0},
  \end{equation*}
  we obtain \eqref{eq:4.2}--\eqref{eq:4.3} with constants only depending on the a-priori data
   and on $\rho_1$. Such a result is a nearly straightforward adaptation of the theory developed in \cite{AK12}.

\end{rem}
The proof of Theorem \ref{theo:4.1} will be given in Section \ref{proof_main}.
In the following two sections, we introduce some auxiliary results, concerning
the topological-metric aspects of the problem and the main properties of the fundamental solution
of the Lam\'e system with discontinuous coefficients.

\section{Metric lemmas}
\label{topo}

Let $\mathcal{G}$ be the connected component of $\R^3 \setminus
(\overline{D_1 \cup D_2})$ which contains $\R^3 \setminus
\overline{\Omega}$ and let us denote
\begin{equation}
  \label{eq:5.1}
   \Omega_D= \R^3 \setminus \overline{\mathcal{G}}.
\end{equation}
As we shall see later, one of the key ingredients of the stability
proof consists in propagating the smallness {}from the boundary
$\partial \Omega$ inside $\Omega$. Since the value $d_H(\partial
D_1, \partial D_2)$ may be attained at some point not belonging to
$\overline{\mathcal{G}}$ and, therefore, not reachable {}from the
exterior, it is necessary to introduce a \textit{modified
distance} following the ideas developed in \cite{ADiC05}.
Precisely, let us introduce the \textit{modified distance} between $D_1$ and $D_2$
\begin{equation}
  \label{eq:5.2}
   d_\mu (D_1,D_2) = \max \left \{
   \max_{x\in \partial D_1 \cap \partial \Omega_D}
   \hbox{dist}(x,D_2),  \ \max_{x\in \partial D_2 \cap \partial \Omega_D}
   \hbox{dist}(x,D_1) \right \}.
\end{equation}
We remark here that $d_\mu$ is not a metric and, in general, it
does not dominate the Hausdorff distance. However, under our a
priori assumptions on the inclusion, the following lemma holds
true.

\begin{lem}[Proposition 3.3 in \cite{ADiC05}]
   \label{lem:6.1}

Under the assumptions of Theorem \ref{theo:4.1}, there exists a constant $c_0 \geq 1$ only depending on $M_0$ and
$\alpha$ such that
\begin{equation}
  \label{eq:6.1}
   d_H (\partial D_1, \partial D_2) \leq c_0 d_\mu (D_1,D_2).
\end{equation}
\end{lem}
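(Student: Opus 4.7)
The plan is a case analysis on where the Hausdorff distance is realized, combined with the $C^{1,\alpha}$ regularity of the inclusions. By the symmetry of $d_H$, I may assume there exists $x_0 \in \partial D_1$ with $\mathrm{dist}(x_0, \partial D_2) = d_H(\partial D_1, \partial D_2) =: d$, and the goal is to show $d \leq c_0 d_\mu(D_1, D_2)$ for some $c_0$ depending only on $M_0$ and $\alpha$.

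In the favorable case $x_0 \in \partial \Omega_D$, I would first observe that $D_2 \subset \mathrm{int}(\Omega_D)$: indeed $D_2$ is open and $D_2 \subset \overline{D_1 \cup D_2} \subset \Omega_D$. Therefore $x_0 \notin D_2$, so the distance from $x_0$ to $\overline{D_2}$ is attained on $\partial D_2$, giving $\mathrm{dist}(x_0, D_2) = \mathrm{dist}(x_0, \partial D_2) = d$. Since $x_0 \in \partial D_1 \cap \partial \Omega_D$, the definition of $d_\mu$ yields $d \leq d_\mu$ directly, so $c_0 = 1$ suffices here.

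In the delicate case $x_0 \in \mathrm{int}(\Omega_D) \setminus \partial \Omega_D$, the point $x_0$ is screened from the exterior component $\mathcal{G}$ by $\overline{D_1 \cup D_2}$. The $C^{1,\alpha}$ exterior sphere condition at $x_0 \in \partial D_1$ provides a ball $B_{\rho_1}(z) \subset \overline{D_1}^c$ tangent to $\partial D_1$ at $x_0$, with $\rho_1 \geq c(M_0,\alpha)\rho_0$. Shrinking $\rho_1$ if necessary, I can arrange $B_{\rho_1}(z) \subset \Omega_D$, so the ball misses $\mathcal{G}$; invoking the connectedness assumptions \eqref{eq:1.0} and \eqref{eq:1.2} (which rule out trapped cavities between $D_1$ and $D_2$), the ball must in fact lie in $\overline{D_2}$. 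In particular $x_0 \in \overline{D_2}$, and either $x_0 \in \partial D_2$ (whence $d = 0$) or $x_0 \in D_2$. In the latter case I would trace along $\partial D_1$ starting from $x_0$, by iterating a chain of exterior $\rho_1$-balls whose centers follow $\partial D_1$, until a first point $x_0^\ast \in \partial D_1 \cap \partial D_2 \subset \partial \Omega_D$ is reached. The quantitative cone conditions attached to $\partial D_1$ and $\partial D_2$ force $|x_0 - x_0^\ast|$ to be controlled by a constant multiple of $d_\mu$, and the triangle inequality $d \leq |x_0 - x_0^\ast| + \mathrm{dist}(x_0^\ast, \partial D_2)$ combined with the favorable case applied to $x_0^\ast$ then closes the argument.

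The chief obstacle is exactly the chain-of-balls construction in the delicate case: one must show that the walk along $\partial D_1$ from the hidden $x_0$ to the first visible point $x_0^\ast \in \partial \Omega_D$ has length bounded by a constant multiple of $d_\mu$ (rather than merely of $d$, which would be circular). This is a purely geometric step driven by the uniform $C^{1,\alpha}$ regularity, and it must also handle the degenerate configuration in which one inclusion is entirely nested inside the other; in that configuration $\partial D_1 \cap \partial \Omega_D$ may be empty, but the symmetric term in $d_\mu$ then coincides with $d_H$ up to the same constant, as worked out in the scalar setting of \cite{ADiC05}.
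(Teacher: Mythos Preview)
The paper does not give its own proof of this lemma; it is imported as Proposition~3.3 of \cite{ADiC05}, so there is no in-paper argument to compare against. Your Case~1 (when $x_0\in\partial D_1\cap\partial\Omega_D$) is correct.

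In Case~2 there is a genuine gap. The claim that the shrunk exterior ball $B_{\rho_1}(z)\subset\Omega_D\setminus\overline{D_1}$ must lie in $\overline{D_2}$ is not a consequence of \eqref{eq:1.0} and \eqref{eq:1.2}: those hypotheses say only that $\R^3\setminus\overline{\Omega}$ and each $\R^3\setminus\overline{D_i}$ are individually connected, and this does \emph{not} exclude bounded components of $\R^3\setminus\overline{D_1\cup D_2}$ trapped between the two inclusions. For instance, let $D_1$ and $D_2$ be two (smoothed) half-spherical-shells whose union is the full shell $\{1<|x|<2\}$; each $\R^3\setminus\overline{D_i}$ is connected, yet $\R^3\setminus\overline{D_1\cup D_2}$ has the bounded component $\{|x|<1\}$. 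Hence the ball may sit in such a cavity rather than in $\overline{D_2}$, and the conclusion $x_0\in\overline{D_2}$ does not follow. Since the subsequent ``walk along $\partial D_1$'' and the crucial bound $|x_0-x_0^\ast|\leq C\,d_\mu$ are in any case only asserted and referred back to \cite{ADiC05}, your sketch does not stand on its own as a proof---which is consistent with the paper's decision to treat the lemma as a citation rather than reprove it.
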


It is easy to verify that
\begin{equation*}
    \max_{x\in \partial D_1 \cap \partial \Omega_D}
   \hbox{dist}(x,D_2) =  \max_{x\in \partial D_1 \cap \partial \Omega_D}
   \hbox{dist}(x, \partial D_2)
\end{equation*}
\begin{equation*}
    \max_{x\in \partial D_2 \cap \partial \Omega_D}
   \hbox{dist}(x,D_1) = \max_{x\in \partial D_2 \cap \partial \Omega_D}
   \hbox{dist}(x, \partial D_1),
\end{equation*}
so that $d_\mu(D_1,D_2) \leq
d_H(\partial D_1, \partial D_2)$, and therefore, in view of Lemma \ref{lem:6.1}, these two
quantities are
comparable.

Another obstacle comes out {}from the fact that the propagation of
smallness arguments are based on an iterated application of the
three-spheres inequality for solutions to the Lam\'{e} system over
chains of balls contained in $\mathcal{G}$ and, in this step, it
is crucial to control {}from below the radii of these balls. In order to circumvent
the case in which points of $\partial \Omega_D$ are not reachable by such chains of
balls, we
found it convenient to adapt to our case ideas first presented in
\cite{AS12} in dealing with crack detection in electrical
conductors, which we summarize in the lemma below.
We note, incidentally, that this issue was somewhat underestimated in \cite{ADiC05}.
The procedure developed here  enables to fill the possible
gaps in the proofs in \cite{ADiC05} (and also in \cite{DiC07}, \cite{DiCV10}, \cite{DiCV11}).

Let us premise some notation. Given $O=(0,0,0)$, $v$ a unit
vector, $h>0$ and $\vartheta \in \left ( 0, \frac{\pi}{2} \right
)$, we denote
\begin{equation}
  \label{eq:cono}
   C (O, v, h, \vartheta )
   =\left \{
   x \in \R^3 |\ |x - (x \cdot v)v| \leq \sin \vartheta |x|, \ 0 \leq x\cdot v \leq h  \right \}
\end{equation}
the closed truncated cone with vertex at $O$, axis along the
direction $v$, height $h$ and aperture $2\vartheta$. Given $R$,
$d$, $0 < R < d$ and $Q=-de_3 $, let us consider the cone $C
\left (O,-e_3, \frac{d^2-R^2}{d}, \arcsin
\frac{R}{d}\right )$. We note that the lateral boundary of this cone
is tangent to the sphere $\partial B_R(Q)$ along the circumference of its base.

{}From now on, for simplicity, we assume that
\begin{equation}
  \label{eq:8.0}
   d_\mu (D_1,D_2) = \max_{x \in \partial D_1 \cap \partial
   \Omega_D} \hbox{dist}(x, \partial D_2)
\end{equation}
and we write $d_\mu = d_\mu (D_1,D_2)$.

Let us define
\begin{equation}
  \label{eq:8.2}
   S_{2\rho_0} = \left \{ x\in\R^3 \ | \rho_0 < \hbox{dist}(x, \overline{\Omega}) <2\rho_0 \right
   \}.
\end{equation}

We shall make use of paths connecting points in order that appropriate tubular neighbourhoods of such paths still remain within $\R^3\setminus \Omega_D$.

Let us pick a point $P\in\partial D_1\cap\partial\Omega_D$, let $\nu$ be the outer unit normal to $\partial D_1$ at $P$ and let $d>0$ be such that the segment $[(P+d\nu),P]$ is contained in $\R^3\setminus\Omega_D$. Given $P_0\in \R^3\setminus\Omega_D$, let $\gamma$ be a path
in $\R^3\setminus\Omega_D$ joining $P_0$ to $P+d\nu$. We consider the following neighbourhood of
$\gamma\cup [(P+d\nu),P]\setminus \{P\}$ formed by a tubular neighbourhood of $\gamma$ attached to a cone with vertex at $P$ and axis along $\nu$
\begin{equation}
  \label{eq:matita}
   V(\gamma) = \bigcup_{S \in \gamma} B_R(S) \cup
    C \left (P,\nu, \frac{d^2-R^2}{d}, \arcsin
   \frac{R}{d}\right ).
\end{equation}
Note that two significant parameters are associated to such a set, the radius $R$ of the tubular neighbourhood of $\gamma$, $\cup_{S\in\gamma}B_R(S)$, and the half-aperture $\arcsin{\frac{R}{d}}$
of the cone $C \left (P,\nu, \frac{d^2-R^2}{d}, \arcsin
\frac{R}{d}\right )$. In other terms, $V(\gamma)$ depends on $\gamma$ and also on the parameters
$R$ and $d$. At each of the following steps, such two parameters shall be appropriately chosen and
shall be accurately specified. For the sake of simplicity we convene to maintain the notation $V(\gamma)$ also when different values of $R$, $d$ are introduced.

Also we warn the reader that it will be convenient at various stages to use a reference
frame such that $P=O=(0,0,0)$ and $\nu=-e_3$.
\begin{lem}
   \label{lem:8.1}
Under the above notation, there exist positive constants $\overline{d}$,
$c_1$, where $ \frac{\overline{d}}{\rho_0}$ only depends on $M_0$ and
$\alpha$, and $c_1$ only depends on $M_0$, $\alpha$, $M_1$, and
there exists a point $P \in
\partial D_1$ satisfying
\begin{equation}
  \label{eq:8BIS.2}
   c_1 d_\mu \leq \hbox{dist}(P,D_2),
\end{equation}
and such that, giving any point $P_0 \in S_{2\rho_0}$, there
exists a path $\gamma \subset ( \overline{\Omega^{\rho_0}} \cup
S_{2\rho_0} ) \setminus \overline{\Omega_D} $ joining $P_0$ to
$P+\overline{d}\nu$, where $\nu$ is the unit outer normal to $D_1$ at $P$,
such that, choosing a coordinate system with origin $O$ at $P$ and
axis $e_3=-\nu$, the set $V(\gamma)$ introduced in \eqref{eq:matita}
satisfies
\begin{equation}
  \label{eq:8BIS.4}
   V(\gamma) \subset \R^3 \setminus \Omega_D,
\end{equation}
provided $R = \frac{\overline{d}}{ \sqrt{1+L_0^2}  }$, where $L_0$, $0<L_0 \leq M_0$, is
a constant only depending on $M_0$ and $\alpha$.
\end{lem}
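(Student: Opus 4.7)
The argument splits naturally into three pieces: (a) use the $C^{1,\alpha}$ regularity of $D_1$ to produce a universal exterior cone at each boundary point; (b) select the vertex $P\in\partial D_1$ by a careful iterative procedure so that both \eqref{eq:8BIS.2} and \eqref{eq:8BIS.4} hold; (c) connect the base point $P+\overline{d}\nu$ of this cone to any $P_0\in S_{2\rho_0}$ by a suitable tube.

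For (a), at a generic $P\in\partial D_1$ with outward unit normal $\nu$, Definition \ref{def:2.1} parametrizes $\partial D_1$ locally as the graph $x_3=\varphi(x')$ with $\varphi(0)=0$, $\nabla\varphi(0)=0$, and $|\nabla\varphi(x')|\leq M_0\rho_0^{-\alpha}|x'|^\alpha$. Fixing a Lipschitz threshold $L_0=L_0(M_0,\alpha)\in(0,M_0]$, I would obtain a radius $r_0$ comparable to $\rho_0$ on which $\varphi$ is $L_0$-Lipschitz, whence $\{x_3<-L_0|x'|\}\cap B_{r_0}$ is disjoint from $\overline{D_1}$. Setting $\overline{d}=r_0\sqrt{1+L_0^2}/L_0$ and $R=\overline{d}/\sqrt{1+L_0^2}$ identifies this region with the truncated cone appearing in \eqref{eq:matita}, so that $\overline{d}/\rho_0$ depends only on $M_0,\alpha$.

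For (b), I would start at a maximizer $\tilde P$ of \eqref{eq:8.0}, for which $\hbox{dist}(\tilde P,D_2)=d_\mu$. If the exterior cone at $\tilde P$ already lies in $\overline{\mathcal{G}}$, set $P=\tilde P$ and $c_1=1$. Otherwise the cone intersects either $\overline{D_2}$ or a pocket of $\R^3\setminus\overline{D_1\cup D_2}$ not contained in $\mathcal{G}$. Using first-order optimality at $\tilde P$, which forces the closest point $Q_0\in\overline{D_2}$ to lie on the ray $\tilde P+\R^+\nu$, displace $P$ tangentially along $\partial D_1$ by a length proportional to $d_\mu$; the bound on the second fundamental form of $\partial D_1$ tilts the new $\nu$ so that $Q_0$ exits the angular sector of aperture $\arctan(1/L_0)$ defining the cone, while the triangle inequality preserves $\hbox{dist}(P,D_2)\geq c_1 d_\mu$ for a suitable $c_1$. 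The iteration terminates after a bounded number of steps, the bound depending on $M_0,\alpha,M_1$ via the volume constraint \eqref{eq:1.1} and the consequent surface-area bound on $\partial D_1$. This is essentially the chain-of-balls idea of \cite{AS12} adapted to the present setting; the pocket case reduces to the previous one by the same type of displacement, since pockets are bounded by pieces of $\partial D_1\cup\partial D_2$.

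For (c), both endpoints $P_0$ and $P+\overline{d}\nu$ belong to the open connected set $\mathcal{G}$, so a polygonal path $\gamma_0\subset\mathcal{G}$ joining them exists by connectedness. To confine $\gamma_0$ within $\overline{\Omega^{\rho_0}}\cup S_{2\rho_0}$, I would retract any excursion outside this slab onto $S_{2\rho_0}$, using the connectedness of $\R^3\setminus\overline{\Omega}$ and the $C^{1,\alpha}$ regularity of $\partial\Omega$. Finally, push the resulting path away from $\partial\Omega_D$ by a distance $R$ via a standard perturbation: this is feasible because of the uniform exterior cone property enjoyed by $\partial D_1,\partial D_2,\partial\Omega$, which ensures that the tube $\bigcup_{S\in\gamma}B_R(S)$ stays in $\R^3\setminus\Omega_D$ and still terminates at $P+\overline{d}\nu$.

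The main obstacle lies in Step (b): reconciling the universal scale $\overline{d}\sim\rho_0$, dictated by the regularity of $\partial D_1$, with the variable scale $d_\mu$, which may be arbitrarily small, requires a quantitative balance between tangential displacement along $\partial D_1$ and the induced rotation of $\nu$, as well as a finite-iteration argument in the spirit of \cite{AS12}. This is the heart of the technical content of the lemma, and the place where the proof diverges most sharply from the scalar analogue of \cite{ADiC05}.
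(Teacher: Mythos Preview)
Your program misses the paper's central device: a dichotomy on the size of $d_\mu$ relative to a fixed threshold $d_1=d_1(M_0,\alpha)$. In the paper's proof, if $d_\mu\le d_1$ then Lemma~\ref{lem:6.1} forces $d_H(\partial D_1,\partial D_2)\le d_0$, and one invokes a regularity theorem of Alessandrini--Beretta--Rosset--Vessella \cite[Theorem~3.6]{ABRV00} to conclude that $\partial\Omega_D$ is itself Lipschitz with constants $r_0,L_0$ depending only on $M_0,\alpha$. This is the key step: once $\partial\Omega_D$ is uniformly Lipschitz, the set $(\R^3\setminus\Omega_D)_{h r_0}$ is connected for small $h$ (Lemma~5.5 of \cite{ARRV09}), and the path $\gamma$ with its tubular neighbourhood of width $R\sim r_0$ is obtained directly, with $P$ simply the maximizer in \eqref{eq:8.0}. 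If instead $d_\mu\ge d_1$, then $d_\mu$ is bounded below by a fixed constant $d_2$; one uses connectedness of $(\R^3\setminus D_2)_{d_2}$ and a first-hitting-time argument along a path toward the maximizer to produce $P\in\partial D_1$ with $\hbox{dist}(P,D_2)\ge d_2/2$, and \eqref{eq:8BIS.2} follows because $d_\mu\le\hbox{diam}(\Omega)\le C(M_0,M_1)\rho_0$.

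Your step (b) attempts to bypass this dichotomy by iterative tangential displacement, but the scales do not match. The cone at $\tilde P$ has height $\overline d\sim\rho_0$ and fixed aperture, while $d_\mu$ may be arbitrarily small; a tangential move of order $d_\mu$ rotates $\nu$ only by $O((d_\mu/\rho_0)^\alpha)$ (note $\partial D_1$ is merely $C^{1,\alpha}$, so there is no second fundamental form to invoke), far too little to expel $D_2$ from a cone of size $\rho_0$, whereas a move of order $\rho_0$ destroys any lower bound on $\hbox{dist}(P,D_2)$. The first-order-optimality claim is also fragile, since the maximum in \eqref{eq:8.0} may be attained on the relative boundary of $\partial D_1\cap\partial\Omega_D$. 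Most seriously, your step (c) asserts that an arbitrary path in $\mathcal G$ can be pushed to distance $R\sim\rho_0$ from $\partial\Omega_D$ by a ``standard perturbation''; but this is exactly the quantitative connectivity at stake, and it is false in general without knowing that $\partial\Omega_D$ is uniformly Lipschitz---which is precisely what \cite{ABRV00} supplies in the small-$d_\mu$ regime.
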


The proof of Lemma \ref{lem:8.1} is given in Section \ref{technical}.

\section{Fundamental solution of the Lam\'e system with discontinuous coefficients}
\label{funda}

In this Section, $D$ is a domain of class $C^{1,\alpha}$ with constants $\rho_0$, $M_0$, $0<\alpha<1$, and $\C$, $\C^I$ satisfy \eqref{eq:2.1}--\eqref{eq:2.2upper}.

Given $y \in \R^3$ and a \textit{concentrated force} $l \delta(\cdot - y)$ applied at $y$,
$l \in \R^3$, $|l|=1$, let us consider the \textit{normalized
fundamental solution} $u^D \in L^1_{loc}(\R^3, \R^3)$ defined by
\begin{equation}
  \label{eq:13.2}
  \left\{ \begin{array}{ll}
  \divrg_x \left ( (\C + (\C^I - \C)\chi_D)\nabla_x u^D(x,y;l) \right ) =-l\delta(x-y),
  & \hbox{in}\ \R^3\setminus \{y\},\\
  &  \\
      \lim_{|x| \rightarrow \infty} u^D(x,y;l)=0,\\
  \end{array}\right.
\end{equation}

where $\delta(\cdot - y)$ is the Dirac distribution supported at
$y$, that is
\begin{equation}
  \label{eq:14.1}
   \int_{\R^3} (\C + (\C^I - \C)\chi_D)\nabla_x u^D(x,y;l) \cdot
   \nabla_x \varphi(x) = l \cdot \varphi (y), \quad \hbox{for every
   } \varphi \in C_c^\infty(\R^3, \R^3).
\end{equation}
It is well-known that
\begin{equation}
  \label{eq:14.2}
   u^D(x,y;l) = \Gamma^D(x,y)l,
\end{equation}
where $\Gamma^D=\Gamma^D(\cdot,y) \in L^1_{loc}(\R^3,
\mathcal{L}(\R^3,\R^3))$ is the \textit{normalized fundamental
matrix} for the operator $\divrg_x((\C + (\C^I -
\C)\chi_D)\nabla_x (\cdot))$. The existence of $\Gamma^D$ is
ensured by the following Proposition.
\begin{prop}
   \label{prop:14.1}
Under the above assumptions, there exists a unique fundamental
matrix $\Gamma^D(\cdot, y) \in C^0(\R^3\setminus \{y\})$.
Moreover, we have
\begin{equation}
  \label{eq:14.3}
   \Gamma^D(x,y) = (\Gamma^D(y,x))^T, \quad \hbox{for every } x\in
   \R^3, \ x \neq y,
\end{equation}
\begin{equation}
  \label{eq:14.4}
   |\Gamma^D(x,y)| \leq C |x-y|^{-1}, \quad \hbox{for every } x\in
   \R^3, \ x \neq y,
\end{equation}
\begin{equation}
  \label{eq:14.5}
   |\nabla_x \Gamma^D(x,y)| \leq C |x-y|^{-2}, \quad \hbox{for every } x\in
   \R^3, \ x \neq y,
\end{equation}
where the constant $C>0$ only depends on $M_0$, $\alpha$,
$\alpha_0$, $\gamma_0$, $\overline{\lambda}$, $\overline{\mu}$.
\end{prop}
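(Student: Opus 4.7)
The plan is to construct $\Gamma^D(\cdot,y)$ by invoking the general Green's matrix theory for divergence-form elliptic systems with bounded measurable coefficients developed by Hofmann and Kim \cite{HK07}. The coefficient tensor $\C + (\C^I - \C)\chi_D$ is bounded and measurable on $\R^3$, and by \eqref{eq:2.2}, \eqref{eq:2.2upper}, \eqref{eq:2.3} it satisfies the Legendre condition uniformly on both sides of $\partial D$. Since $n=3$, the Hofmann--Kim construction produces a Green's matrix vanishing at infinity (normalized via \eqref{eq:14.1}) and yields at once the pointwise bound \eqref{eq:14.4}, with constant depending only on the ellipticity and upper bounds of the coefficients, i.e.\ only on $\alpha_0, \gamma_0, \overline{\mu}, \overline{\lambda}$. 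Uniqueness within $L^1_{loc}$ with the decay at infinity follows from a standard energy argument: the difference of two such candidates is a bounded weak solution to the homogeneous system on $\R^3$ vanishing at infinity, hence identically zero by Liouville-type results for strongly elliptic systems.

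For the gradient bound \eqref{eq:14.5}, I would apply the a-priori piecewise Schauder estimates of Li--Nirenberg \cite{LN03} for divergence-form systems whose coefficients are piecewise Hölder continuous (here, piecewise constant) across a $C^{1,\alpha}$ interface. Specifically, fix $x\neq y$ and set $r=|x-y|/4$; then $\Gamma^D(\cdot,y)$ is a weak solution of the homogeneous system in $B_{2r}(x)\setminus\overline{B_{r/2}(y)}$. Combining the $L^\infty$ bound from \eqref{eq:14.4} with the Li--Nirenberg interior/interface gradient estimate applied on $B_{r}(x)$ and rescaling to a unit ball (the Lamé system being homogeneous under dilations) gives $|\nabla_x \Gamma^D(x,y)|\leq Cr^{-1}|\Gamma^D(\cdot,y)|_{L^\infty(B_{2r}(x))}\leq C|x-y|^{-2}$, with a constant depending only on the a-priori data thanks to the $C^{1,\alpha}$ regularity of $\partial D$ with constants $\rho_0, M_0$. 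Continuity of $\Gamma^D(\cdot,y)$ on $\R^3\setminus\{y\}$ is then a by-product, since across $\partial D$ the De Giorgi--Nash--Meyers regularity (or the same Li--Nirenberg estimate) gives Hölder continuity.

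The symmetry \eqref{eq:14.3} will follow from the major symmetry $C_{ijkl}=C_{klij}$ of both $\C$ and $\C^I$, noted in \eqref{eq:2.2bis}, by the classical duality trick. Testing the equation for $\Gamma^D(\cdot,y)e_i$ against a suitable mollification/truncation of $\Gamma^D(\cdot,x)e_j$ and vice versa, and using symmetry of the bilinear form $\int (\C+(\C^I-\C)\chi_D)\nabla u\cdot\nabla v$, one obtains $e_j^T\Gamma^D(x,y)e_i=e_i^T\Gamma^D(y,x)e_j$ after passing to the limit; the removal of mollification is justified by the bounds \eqref{eq:14.4}, \eqref{eq:14.5} and the fact that the singularities sit at distinct points.

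The main obstacle is the careful propagation of constants. One must verify that the constants produced in \cite{HK07} and \cite{LN03} for our specific piecewise constant, strongly elliptic Lamé tensor across a $C^{1,\alpha}$ interface depend only on $M_0,\alpha,\alpha_0,\gamma_0,\overline{\mu},\overline{\lambda}$ and not on auxiliary moduli of continuity or hidden norms. In particular, the Li--Nirenberg estimate must be applied in a scale-invariant form so that no spurious dependence on $\rho_0$ enters (beyond the dimensional one absorbed into the factor $|x-y|^{-2}$). This requires checking that on balls of radius smaller than $\rho_0$ centered near $\partial D$, the rescaled interface still has $C^{1,\alpha}$ norm bounded by a quantity controlled by $M_0$, which follows directly from Definition \ref{def:2.1} and Remark \ref{rem:2.1}; the remainder of the verification is a careful but routine rescaling.
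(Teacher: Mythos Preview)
Your overall strategy matches the paper's---both rest on \cite{HK07} for existence, symmetry and the pointwise bound \eqref{eq:14.4}, and on \cite{LN03} for the gradient bound \eqref{eq:14.5} (the paper in fact uses exactly your choice $r=|x-y|/4$). However, there is a genuine logical gap in your ordering. For elliptic \emph{systems} with merely bounded measurable coefficients, the Hofmann--Kim construction does \emph{not} ``yield at once'' the bound \eqref{eq:14.4}: De Giorgi--Nash--Moser theory fails for systems, and in \cite{HK07} the pointwise Green's matrix estimate is obtained only under an additional hypothesis---a uniform local H\"older estimate for weak solutions (their condition of type \eqref{eq:15.2}). The paper's proof is organized around precisely this point: it first invokes Li--Nirenberg (Lemma \ref{lem:14BIS.1}) to establish that every local weak solution satisfies a Lipschitz estimate \eqref{eq:19.3}, thereby \emph{verifying} the hypothesis needed in \cite{HK07}; only then do existence, \eqref{eq:14.3} and \eqref{eq:14.4} follow. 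In your write-up the Li--Nirenberg estimate appears only afterward, as a tool for \eqref{eq:14.5} and for continuity, and you appeal to ``De Giorgi--Nash--Meyers regularity'' across $\partial D$, which is not available here since the Lam\'e operator is a genuine system.

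The fix is simple and keeps your argument intact: move the Li--Nirenberg interior/interface estimate to the front, state it as providing the uniform local H\"older (indeed Lipschitz) bound for all weak solutions of $\divrg((\C+(\C^I-\C)\chi_D)\nabla u)=0$, and then invoke \cite{HK07} with this hypothesis in hand. Your derivation of \eqref{eq:14.5} and your symmetry argument are fine as stated; note that the paper simply records that \eqref{eq:14.3} is already part of the Hofmann--Kim output once the H\"older hypothesis is verified, so your separate duality computation is not needed.
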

Let us premise the following Lemma due to Li and Nirenberg
\cite{LN03}.
\begin{lem}
   \label{lem:14BIS.1}
Under the above hypotheses, let $u \in H^1(Q_{r,rM_0})$ be a solution
to
\begin{equation}
  \label{eq:14BIS.1}
   \divrg((\C + (\C^I - \C)\chi_D)\nabla u )=0, \quad \hbox{in }
   Q_{r,rM_0}.
\end{equation}
Then, $u \in C^0(Q_{r,rM_0})$ and, for every $x \in Q_{r,rM_0}$
such that $Q_{2\rho,2\rho M_0}(x) \subset Q_{r,rM_0}$, we have
\begin{multline}
  \label{eq:14BIS.2}
   \| \nabla u\|_{L^\infty ( Q_{\rho,\rho M_0}(x))}
   +
   \rho^{\beta} |\nabla u |_{\beta, Q_{\rho,\rho M_0}(x) \cap
   \overline{D}} +
    \rho^{\beta} | \nabla u |_{\beta, Q_{\rho,\rho M_0}(x)
    \setminus{D}}\leq
    \\
    \leq
    \frac{C}{\rho^{1+ \frac{3}{2}}}
    \left (
    \int_{Q_{2\rho,2\rho M_0}( {x})}
    |u|^2
    \right )^{ \frac{1}{2}},
\end{multline}
where $|\cdot|_\beta$ denotes the usual H\"{o}lder seminorm,
$\beta = \frac{\alpha}{2(1+\alpha)}$ and $C>0$ only depends on
$M_0$, $\alpha$, $\alpha_0$, $\gamma_0$, $\overline{\lambda}$,
$\overline{\mu}$.
\end{lem}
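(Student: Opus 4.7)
The statement is a rescaled, localized instance of the piecewise-regular transmission theorem of Li and Nirenberg \cite{LN03}, and my plan is to reduce to their result by a scaling and a flattening argument.

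\emph{Scaling reduction.} First, by the change of variables $\tilde u(y) = u(x + \rho y)$, it suffices to prove the estimate at unit scale (say $\rho = 1$) with $Q_{1,M_0}$ in place of $Q_{\rho,\rho M_0}(x)$ and $Q_{2,2M_0}$ in place of the enlarged cylinder. The precise factors $\rho^{-(1+3/2)}$ and $\rho^\beta$ in \eqref{eq:14BIS.2} are then dictated by dimensional accounting: differentiation costs a factor $\rho^{-1}$, the $L^\infty$-$L^2$ normalization costs $\rho^{-3/2}$ in three dimensions, and the Hölder quotient $[\nabla u]_\beta$ contributes $\rho^{-\beta}$ which is absorbed by the prefactor $\rho^\beta$ on the left.

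\emph{Local flattening of the interface.} At unit scale, points lying at distance comparable to $1$ from $\partial D$ are treated by standard Schauder interior estimates for the constant-coefficient Lamé system on each pure phase. Near $\partial D$, the $C^{1,\alpha}$ regularity of the interface yields a local $C^{1,\alpha}$-diffeomorphism flattening $\partial D$ onto $\{y_3=0\}$; under this change of variables, the equation $\divrg((\C+(\C^I-\C)\chi_D)\nabla u)=0$ becomes a divergence-form elliptic system with $C^{0,\alpha}$ coefficients on each side of the flat interface, with piecewise constant jump across $\{y_3=0\}$. Uniform ellipticity is preserved thanks to \eqref{eq:2.3} and the bi-Lipschitz character of the diffeomorphism.

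\emph{Application of Li-Nirenberg and passage to $L^2$.} To the flattened system I would apply the Li-Nirenberg transmission regularity theorem, which delivers one-sided $C^{1,\beta}$ regularity for $u$, with the sharp exponent $\beta = \frac{\alpha}{2(1+\alpha)}$ arising as the fixed point of the Campanato-type excess-decay iteration across a $C^{1,\alpha}$ interface (one factor of $\alpha$ from the Hölder regularity of $\partial D$, one factor of $1/(2(1+\alpha))$ from the interpolation between $C^{0}$ and $C^{1,\alpha}$ in the iteration). The bound is produced initially in terms of $\|u\|_{L^\infty(Q_{2,2M_0})}$; to convert this to the $L^2$ norm appearing in \eqref{eq:14BIS.2} I would apply the De Giorgi-Nash-Moser $L^\infty$-$L^2$ estimate componentwise, or equivalently iterate Caccioppoli plus Sobolev embedding on a nested family of cylinders between $Q_{2,2M_0}$ and a slightly larger intermediate cube.

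\emph{Main obstacle.} The genuinely nontrivial ingredient is the one-sided $C^{1,\beta}$ transmission estimate across a $C^{1,\alpha}$ interface with discontinuous coefficients, which is precisely the content of \cite{LN03} and which I would invoke as a black box. Reproducing it from scratch would require freezing the coefficients, using explicit transmission solutions of the constant-coefficient flat-interface model to build a Campanato-type excess decay, and iterating through the curvature of $\partial D$; all other steps (scaling, flattening, Caccioppoli-Moser) are standard once that estimate is in hand.
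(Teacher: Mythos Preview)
Your proposal is correct in spirit and in its essential content: the paper does not prove this lemma at all but simply attributes it to Li and Nirenberg \cite{LN03} and states it without proof, so your plan of invoking \cite{LN03} as a black box after the standard scaling and flattening reductions is precisely what the paper does (implicitly). One minor caution: your passage from $L^\infty$ to $L^2$ via ``De Giorgi--Nash--Moser componentwise'' is not literally available for systems, but the alternative you mention (iterated Caccioppoli plus Sobolev) is fine, and in any case the Li--Nirenberg estimate already delivers the $L^2$ right-hand side directly.
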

\begin{proof}[Proof of Proposition \ref{prop:14.1}]
In view of the results presented in \cite{HK07}, in order to ensure the existence
of $\Gamma^D$ and properties \eqref{eq:14.3}, \eqref{eq:14.4},
it is sufficient
to prove that there exist constants $\mu_0\in (0,1]$, $C>0$ such
that, for every $R>0$ and $\overline{x} \in \R^3$, all weak solutions $u \in H^1(B_{2R}(\overline{x}))$ of the equation
\begin{equation}
  \label{eq:15.1}
   \divrg (( \C + (\C^I -\C)\chi_D)\nabla u)=0
\end{equation}
satisfy
\begin{equation}
  \label{eq:15.2}
   |u|_{ \mu_0, B_R(\overline{x})}
    \leq \frac{C}{R^{\mu_0}}
   \left ( \frac{1}{| B_{2R}(\overline{x})   |}   \int_{B_{2R}(\overline{x})} |u|^2 \right ) ^{ \frac{1}{2}},
\end{equation}
see Lemma 2.3 in \cite{HK07}. In fact, we shall derive \eqref{eq:15.2} with $\mu_0=1$.

By Lemma \ref{lem:14BIS.1},
$u \in W^{1,\infty}(
B_{R}(\overline{x}))$ and consequently it is Lipschitz continuous.
By the results in \cite{LN03}, we have
\begin{equation}
  \label{eq:19.2}
   |u|_{ 1,B_R(\overline{x})}=\|\nabla u\|_{ L^\infty(B_R(\overline{x}))}
    \leq \frac{C}{R^{ \frac{5}{2}  }}
   \left ( \int_{B_{2R}(\overline{x})} |u|^2 \right ) ^{ \frac{1}{2}},
\end{equation}
where $C>0$ only depends on $\alpha$, $M_0$, $\alpha_0$,
$\gamma_0$, $\overline{\lambda}$, $\overline{\mu}$. By
\eqref{eq:19.2}, we finally obtain the desired
estimate
\begin{equation}
  \label{eq:19.3}
   |u|_{ 1,B_R(\overline{x})}
    \leq \frac{C}{R}
   \left ( \frac{1}{| B_{2R}(\overline{x})   |}   \int_{B_{2R}(\overline{x})} |u|^2 \right ) ^{ \frac{1}{2}},
\end{equation}
where $C>0$ only depends on $\alpha$, $M_0$, $\alpha_0$,
$\gamma_0$, $\overline{\lambda}$, $\overline{\mu}$.

It remains to prove estimate \eqref{eq:14.5}. By applying \eqref{eq:19.2} to $\Gamma^D(\cdot,y)$ in $B_s(x)$,
where $s = \frac{|x-y|}{4}$, we have
\begin{equation*}
     \| \nabla_x \Gamma^D(\cdot, y)\|_{ L^\infty (B_s(x))}
     \leq
     \frac{C}{s^{ \frac{5}{2}  }}
     \left (
     \int_{B_{2s}(x)} |\Gamma^D (\xi,y)|^2 d\xi
     \right )^{ \frac{1}{2}  }.
\end{equation*}
Since $|\xi-y|\geq 2s$, and by \eqref{eq:14.4},we have
\begin{equation}
      \label{eq:19BIS.1}
     \| \nabla_x \Gamma^D(\cdot, y)\|_{ L^\infty (B_s(x))}
     \leq
      \frac{C}{|x-y|^2},
\end{equation}
where $C>0$ only depends on $\alpha$, $M_0$, $\alpha_0$,
$\gamma_0$, $\overline{\lambda}$, $\overline{\mu}$.
\end{proof}

\section{Proof of the main theorem}
\label{proof_main}

We begin with the following identity, the prototype of which can be attributed to Alessandrini \cite{Al88} in connection with the inverse conductivity problem.

\begin{lem}
   \label{lem:13.1}
Under the assumptions of Theorem \ref{theo:4.1}, let $u_i \in H^1(\Omega)$, $i=1,2$, be solutions to
\eqref{eq:3.2} with $D=D_i$ respectively.
Then the following identity
holds
\begin{multline}
  \label{eq:13.1}
   \int_\Omega( \C +(\C^I-\C)\chi_{D_1}) \nabla u_1 \cdot \nabla u_2
   - \int_\Omega( \C +(\C^I-\C)\chi_{D_2}) \nabla u_1 \cdot \nabla
   u_2= \\
   = <(\Lambda_{D_1} -
   \Lambda_{D_2})u_2,u_1>.
\end{multline}
\end{lem}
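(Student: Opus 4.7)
The plan is to reduce the identity to two applications of the weak formulation \eqref{eq:3.4bis} of the Dirichlet-to-Neumann map, combined with the symmetry of the bilinear form which comes from the major symmetry \eqref{eq:2.2bis} of the isotropic tensors $\C$ and $\C^I$. More precisely, since $u_1$ is the solution to the Dirichlet problem with inclusion $D_1$ and trace $u_1|_{\partial\Omega}$, taking $v=u_2$ as a test function in \eqref{eq:3.4bis} I would obtain
\begin{equation*}
  \int_\Omega (\C + (\C^I-\C)\chi_{D_1})\nabla u_1 \cdot \nabla u_2 = \langle \Lambda_{D_1} u_1|_{\partial\Omega}, u_2|_{\partial\Omega}\rangle.
\end{equation*}
In the same way, since $u_2$ solves the problem with inclusion $D_2$, choosing $v=u_1$ yields
\begin{equation*}
  \int_\Omega (\C + (\C^I-\C)\chi_{D_2})\nabla u_2 \cdot \nabla u_1 = \langle \Lambda_{D_2} u_2|_{\partial\Omega}, u_1|_{\partial\Omega}\rangle.
\end{equation*}

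The only non-bookkeeping point is the symmetry of the integrand. Using \eqref{eq:2.2bis}, a direct index computation shows $(\C A)\cdot B = (\C B)\cdot A$ for all $3\times 3$ matrices $A,B$ (and likewise for $\C^I$), so the second displayed integral coincides with $\int_\Omega (\C + (\C^I-\C)\chi_{D_2})\nabla u_1 \cdot \nabla u_2$. Similarly, applying the same argument to $\Lambda_{D_1}$ itself (substituting any pair of boundary data into \eqref{eq:3.4bis}) one derives the symmetry of the Dirichlet-to-Neumann map, namely $\langle \Lambda_{D_1} u_1|_{\partial\Omega}, u_2|_{\partial\Omega}\rangle = \langle \Lambda_{D_1} u_2|_{\partial\Omega}, u_1|_{\partial\Omega}\rangle$. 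Subtracting the two identities and invoking this symmetry gives the desired formula \eqref{eq:13.1}. I do not foresee any genuine obstacle here; the proof is a standard polarization-type argument and the role of isotropy is only to guarantee the major symmetry of the elastic tensor, which makes the bilinear form symmetric.
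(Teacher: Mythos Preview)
Your argument is correct and is precisely the approach the paper has in mind: the proof in the paper is the one-line statement that the identity is a ``straightforward consequence of \eqref{eq:3.4bis} and of the symmetry properties of $\C$, $\C^I$,'' which is exactly what you have spelled out. Your additional observation that the major symmetry also yields the self-adjointness of $\Lambda_{D_1}$, needed to pass from $\langle \Lambda_{D_1} u_1, u_2\rangle$ to $\langle \Lambda_{D_1} u_2, u_1\rangle$, is the only nontrivial step and you have handled it correctly.
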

\begin{proof}
Straightforward consequence of \eqref{eq:3.4bis} and of the symmetry properties of $\C$, $\C^I$.
\end{proof}

\medskip
Let us choose $y$, $w \in \R^3$, $y\neq w$, and $l$, $m \in
\R^3$ such that $|l|=|m|=1$. We define the functions
\begin{equation}
  \label{eq:20.1}
     S_{D_1}(y,w;l,m) = \int_{D_1} (\C^I -\C)\nabla_x (
     \Gamma^{D_1}(x,y)l) \cdot \nabla_x (
     \Gamma^{D_2}(x,w)m),
\end{equation}
\begin{equation}
  \label{eq:20.2}
     S_{D_2}(y,w;l,m) = \int_{D_2} (\C^I -\C)\nabla_x (
     \Gamma^{D_1}(x,y)l) \cdot \nabla_x (
     \Gamma^{D_2}(x,w)m),
\end{equation}
\begin{equation}
  \label{eq:20.3}
     f(y,w;l,m)=S_{D_1}(y,w;l,m)-S_{D_2}(y,w;l,m).
\end{equation}
The following Lemma takes its inspiration {}from a result due to Beretta, Francini and Vessella
\cite[Proposition 3.2]{BFV13}.
\begin{lem}
   \label{lem:21.0}
   For every $y,w\in\R^3$, $y\neq w$, we have
\begin{equation}
  \label{eq:21.0}
     f(y,w;l,m)=(\Gamma^{D_2}- \Gamma^{D_1}) (y,w)m\cdot l.
\end{equation}
\end{lem}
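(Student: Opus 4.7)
The plan is to derive \eqref{eq:21.0} as an ``Alessandrini-type'' identity adapted to fundamental solutions rather than to boundary-data solutions. Set $\C_1 := \C + (\C^I - \C)\chi_{D_1}$ and $\C_2 := \C + (\C^I - \C)\chi_{D_2}$. By the defining distributional identity \eqref{eq:14.1}, $\Gamma^{D_i}(\cdot,z)k$ satisfies
\[
\int_{\R^3} \C_i \nabla_x(\Gamma^{D_i}(x,z)k)\cdot\nabla \varphi(x)\,dx = k\cdot\varphi(z), \qquad \varphi \in C_c^\infty(\R^3,\R^3).
\]
Formally inserting $\varphi(x) = \Gamma^{D_2}(x,w)m$ in the equation for $\Gamma^{D_1}(\cdot,y)l$ produces
\[
\int_{\R^3}\C_1\nabla_x(\Gamma^{D_1}(x,y)l)\cdot\nabla_x(\Gamma^{D_2}(x,w)m)\,dx = l\cdot\Gamma^{D_2}(y,w)m,
\]
while the symmetric choice $\varphi(x) = \Gamma^{D_1}(x,y)l$ in the equation for $\Gamma^{D_2}(\cdot,w)m$, together with the major symmetry of $\C_2$ and the reciprocity relation \eqref{eq:14.3}, yields
\[
\int_{\R^3}\C_2\nabla_x(\Gamma^{D_1}(x,y)l)\cdot\nabla_x(\Gamma^{D_2}(x,w)m)\,dx = m\cdot\Gamma^{D_1}(w,y)l = l\cdot\Gamma^{D_1}(y,w)m.
\]
Subtracting the two identities and using $\C_1 - \C_2 = (\C^I-\C)(\chi_{D_1}-\chi_{D_2})$, the difference of the left-hand sides equals $S_{D_1}(y,w;l,m) - S_{D_2}(y,w;l,m) = f(y,w;l,m)$ by \eqref{eq:20.1}--\eqref{eq:20.3}, while the difference of the right-hand sides equals $l\cdot(\Gamma^{D_2}-\Gamma^{D_1})(y,w)m = (\Gamma^{D_2}-\Gamma^{D_1})(y,w)m\cdot l$. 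This is exactly \eqref{eq:21.0}.

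The step that demands care is legitimizing the choice $\varphi = \Gamma^{D_2}(\cdot,w)m$, which is singular at $x = w$ and by \eqref{eq:14.4}--\eqref{eq:14.5} decays only as $|x|^{-1}$ at infinity (with gradient $\sim |x|^{-2}$), so it is not an admissible test field in \eqref{eq:14.1}. The cure is the standard excision plus truncation: I would apply a Green-type identity on the bounded domain $\Omega_{R,\epsilon}:=B_R\setminus(\overline{B_\epsilon(y)}\cup\overline{B_\epsilon(w)})$, where both $\Gamma^{D_1}(\cdot,y)l$ and $\Gamma^{D_2}(\cdot,w)m$ are piecewise classical solutions of the Lam\'e system (Lemma \ref{lem:14BIS.1}) whose transmission conditions across the interior interfaces $\partial D_i\cap\Omega_{R,\epsilon}$ are precisely those encoded in the weak formulation and therefore contribute no net surface term. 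The outer boundary integral on $\partial B_R$ is $O(R^{-1})$ by \eqref{eq:14.4}--\eqref{eq:14.5} and vanishes as $R\to\infty$. The two small-sphere integrals, in the limit $\epsilon\to 0$, extract the values $l\cdot\Gamma^{D_2}(y,w)m$ and $m\cdot\Gamma^{D_1}(w,y)l$ respectively.

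The only genuine technical obstacle is the limit $\epsilon\to 0$ on the small spheres: one has to identify the residue of the point singularity, which requires pairing the leading Kelvin-type part of $\Gamma^{D_1}(\cdot,y)l$ near $y$ (respectively $\Gamma^{D_2}(\cdot,w)m$ near $w$) against the H\"older-continuous trace of the companion fundamental solution at the singular point. This is where the hypothesis $y\neq w$ enters, together with the continuity and interior regularity furnished by Proposition \ref{prop:14.1} and Lemma \ref{lem:14BIS.1}. Once these limits are carried out, the subtraction performed above yields \eqref{eq:21.0}.
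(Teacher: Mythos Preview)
Your proof is correct and follows essentially the same route as the paper: apply Green's formula on a large ball $B_R$ to the pair $\Gamma^{D_1}(\cdot,y)l$, $\Gamma^{D_2}(\cdot,w)m$ with coefficients $\C_1$ and $\C_2$ respectively, subtract using the major symmetry of $\C_2$, let the outer boundary terms vanish as $R\to\infty$ by \eqref{eq:14.4}--\eqref{eq:14.5}, and invoke the reciprocity \eqref{eq:14.3}. The paper simply states the two Green identities directly (the distributional equation paired with a function smooth at the pole yields the point value, and the bilinear integrals converge absolutely), so your excision-and-residue argument is a more explicit justification of the same step rather than a different method.
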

\begin{proof}
Let us denote $\Gamma_i=\Gamma^{D_i}$ and $\C_i =(\C+(\C^I-\C)\chi_{D_i})$,
$i=1,2$.
Let $R>0$ be large enough so that $\Omega\subset B_R(0)$ and $|y|, |w|<R$. By Green's formula we have
\begin{multline*}
     \int_{\partial B_R(0)}(\C_2\nabla_x\Gamma_2(x,w)m)\nu\cdot(\Gamma_1(x,y)l)-
     \int_{B_R(0)}\C_2\nabla_x(\Gamma_2(x,w)m)\cdot\nabla_x(\Gamma_1(x,y)l)=\\
     =-\Gamma_1(w,y)l\cdot m,
\end{multline*}
and also
\begin{multline*}
     \int_{\partial B_R(0)}(\C_1\nabla_x\Gamma_1(x,y)l)\nu\cdot(\Gamma_2(x,w)m)-
     \int_{B_R(0)}\C_1\nabla_x(\Gamma_1(x,y)l)\cdot\nabla_x(\Gamma_2(x,w)m)=\\
     =-\Gamma_2(y,w)m\cdot l,
\end{multline*}
By using the major symmetry of $\C_2$ and subtracting,
\begin{multline*}
     S_{D_1}(y,w;l,m)-S_{D_2}(y,w;l,m)+\\
     +\int_{\partial B_R(0)}(\C_2\nabla_x\Gamma_2(x,w)m)\nu\cdot(\Gamma_1(x,y)l)
     -\int_{\partial B_R(0)}(\C_1\nabla_x\Gamma_1(x,y)l)\nu\cdot(\Gamma_2(x,w)m)=\\
     =\Gamma_2(y,w)m\cdot l-\Gamma_1(w,y)l\cdot m.
\end{multline*}
By \eqref{eq:14.4}, \eqref{eq:14.5}, the boundary integrals are infinitesimal as $R\rightarrow \infty$
and, by \eqref{eq:14.3}
\begin{equation*}
     \Gamma_1(w,y)l\cdot m= \Gamma_1(y,w)m\cdot l .
\end{equation*}
\end{proof}

Let us fix $y= \overline{y}\in \R^3 \setminus \overline{\Omega_D}$ and $l\in
\R^3$, $|l|=1$. Let us define
\begin{equation}
  \label{eq:21.1}
     f_k(w;l)= f(\overline{y},w;l,e_k), \quad k=1,2,3,
\end{equation}
\begin{equation}
  \label{eq:21.2}
     f= (f_1,f_2,f_3).
\end{equation}
Similarly, let us fix $w=\overline{w} \in \R^3 \setminus \overline{\Omega_D}$
and $m \in \R^3$, $|m|=1$. We define
\begin{equation}
  \label{eq:21BIS.1}
     \widetilde{f}_j(y;m)= f(y,\overline{w};e_j,m), \quad j=1,2,3,
\end{equation}
\begin{equation}
  \label{eq:21BIS.2}
     \widetilde{f}= (\widetilde{f}_1,\widetilde{f}_2,\widetilde{f}_3).
\end{equation}
\begin{lem}
   \label{lem:21.1}
The vector-valued function $f=f(w;l)$ satisfies the Lam\'{e}
system
\begin{equation}
  \label{eq:21.3}
      \divrg_w(\C\nabla_w f)=0, \quad \hbox{in } \R^3\setminus \overline{\Omega_D},
\end{equation}
for every $l\in \R^3$, $|l|=1$.

The vector-valued function $\widetilde{f}=\widetilde{f}(y;m)$
satisfies the Lam\'{e} system
\begin{equation}
  \label{eq:21BIS.3}
     \divrg_y(\C\nabla_y \widetilde{f})=0, \quad \hbox{in } \R^3\setminus \overline{\Omega_D},
\end{equation}
for every $m\in \R^3$, $|m|=1$.
\end{lem}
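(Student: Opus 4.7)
The plan is to rewrite both $f$ and $\widetilde{f}$ explicitly as column vectors obtained by applying a difference of fundamental matrices to $l$ (respectively $m$), and then to read off the Lam\'{e} equation from the defining PDE \eqref{eq:13.2} of the $\Gamma^{D_i}$, exploiting the cancellation of the Dirac sources upon subtraction.

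First, I would use Lemma \ref{lem:21.0} to express the $k$-th scalar component of $f$ as
\[ f_k(w;l) = (\Gamma^{D_2} - \Gamma^{D_1})(\overline{y},w)\, e_k \cdot l = \sum_{i=1}^3 l_i\, \bigl[(\Gamma^{D_2} - \Gamma^{D_1})(\overline{y},w)\bigr]_{ik}, \]
and then transpose the arguments via \eqref{eq:14.3}, since $[\Gamma^{D_\ell}(\overline{y},w)]_{ik}=[\Gamma^{D_\ell}(w,\overline{y})]_{ki}$, to obtain
\[ f(w;l) = \bigl(\Gamma^{D_2}(w,\overline{y}) - \Gamma^{D_1}(w,\overline{y})\bigr) l, \]
viewed as a column vector in $\R^3$ depending on $w$. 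An entirely analogous computation, now fixing the second slot, gives
\[ \widetilde{f}(y;m) = \bigl(\Gamma^{D_2}(y,\overline{w}) - \Gamma^{D_1}(y,\overline{w})\bigr) m. \]

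Next, since by construction $\mathcal{G}\subset\R^3\setminus\overline{D_1\cup D_2}$, we have $\chi_{D_i}\equiv 0$ throughout $\R^3\setminus\overline{\Omega_D}$ for $i=1,2$. Consequently, from the defining equation \eqref{eq:13.2}, each map $w\mapsto \Gamma^{D_i}(w,\overline{y})\,l$ satisfies, in the distributional sense on $\R^3\setminus\overline{\Omega_D}$,
\[ \divrg_w\bigl(\C\,\nabla_w\bigl(\Gamma^{D_i}(w,\overline{y})\,l\bigr)\bigr) = -l\,\delta(w-\overline{y}). \]
Subtracting these identities for $i=1,2$ cancels the Dirac source, because both fundamental solutions carry the same singular part near $\overline{y}\in\R^3\setminus\overline{\Omega_D}$, where the ambient operator is just $\divrg(\C\nabla\cdot)$; this yields exactly \eqref{eq:21.3}. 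The assertion \eqref{eq:21BIS.3} for $\widetilde{f}$ follows by the same argument with $y$ in the role of the active variable and $\overline{w}$ as the fixed source; note that in this case $y$ already occupies the first slot of $\Gamma^{D_i}(\cdot,\overline{w})$, so no further appeal to \eqref{eq:14.3} is needed.

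The proof is essentially a bookkeeping exercise, so the main (minor) obstacle is to apply \eqref{eq:14.3} in the correct direction so that the derivatives land on the slot of $\Gamma^{D_i}$ for which the defining equation \eqref{eq:13.2} is posed; once this rewriting is done, the cancellation of the two identical Dirac sources is automatic and delivers the homogeneous Lam\'{e} equation on $\R^3\setminus\overline{\Omega_D}$.
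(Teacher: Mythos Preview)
Your proof is correct and follows essentially the same approach as the paper: use Lemma~\ref{lem:21.0} together with the symmetry \eqref{eq:14.3} to rewrite $f(w;l)=(\Gamma^{D_2}-\Gamma^{D_1})(w,\overline{y})\,l$ and $\widetilde{f}(y;m)=(\Gamma^{D_2}-\Gamma^{D_1})(y,\overline{w})\,m$, and then observe that on $\R^3\setminus\overline{\Omega_D}$ both $\chi_{D_i}$ vanish so the two Dirac sources cancel upon subtraction. The paper's proof is simply a terser version of what you wrote.
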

\begin{proof} Since, by \eqref{eq:14.3}, $f=(\Gamma^{D_2}-\Gamma^{D_1})(w,\overline{y})l$ and
$\widetilde{f}=(\Gamma^{D_2}-\Gamma^{D_1})(y,\overline{w})m$, the thesis is a straightforward consequence of Lemma \ref{lem:21.0}.
\end{proof}
\begin{theo}[Upper bound on the function $f$]
   \label{theo:1.23}

   Under the notation of Lemma \ref{lem:8.1}, let
\begin{equation}
  \label{eq:23.1}
     y_h = P - h e_3,
\end{equation}
\begin{equation}
  \label{eq:23.2}
     w_h = P -\lambda_w h e_3, \quad 0<\lambda_w < 1,
\end{equation}
with
\begin{equation}
  \label{eq:23.3}
     0<h\leq \overline{d} \left ( 1 - \frac{\sin \widetilde{\vartheta}_0}{4} \right ),
\end{equation}
where $\widetilde{\vartheta}_0 = \arctan \frac{1}{L_0}$ and $\nu=-e_3$ is the
outer unit normal to $D_1$ at $P$. Then, for every $l$, $m\in\R^3$, $|l|=|m|=1$, we have
\begin{equation}
  \label{eq:23.4}
     |f(y_h, w_h; l,m)| \leq \frac{C}{\lambda_w h } \epsilon ^{ C_1 \left (
     \frac{h}{\rho_0}\right)^{C_2}},
\end{equation}
where $\epsilon$ is the error bound introduced in \eqref{eq:4.1} and the constant $C>0$ only depends on $M_0$, $\alpha$, $M_1$,
$\alpha_0$, $\gamma_0$, $\overline{\lambda}$, $\overline{\mu}$;
\begin{equation}
  \label{eq:23.5}
   C_1 = \gamma \delta^{ 2 + 2 \frac{|\log A|}{|\log
    \chi|}}, \quad  C_2 = 2\frac{| \log \delta| }{|\log \chi|}, \quad \quad A= \frac{\lambda_w  }{ \frac{\overline{d}}{\rho_0} (1
- \vartheta^* \frac{ \sin \widetilde{\vartheta}_0}{8}) }, \quad \chi = \frac{1
- \frac{\sin \widetilde{\vartheta}_0}{8} }{ 1+ \frac{\sin \widetilde{\vartheta}_0}{8} },
\end{equation}
where $\delta$, $0<\delta<1$, $\vartheta^*$, $0<\vartheta^*\leq
1$, only depend on $\alpha_0$, $\gamma_0$, $\overline{\lambda}$,
$\overline{\mu}$; $\gamma>0$ only depends on $M_0$, $\alpha$,
$M_1$, $\alpha_0$, $\gamma_0$, $\overline{\lambda}$,
$\overline{\mu}$.
\end{theo}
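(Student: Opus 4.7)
The starting point is to convert the bound \eqref{eq:4.1} on the Dirichlet-to-Neumann difference into a bound on $f$ at points outside $\Omega$. By Lemma \ref{lem:13.1}, applied with $u_i=\Gamma^{D_i}(\cdot,\bar y)l$ and $u_j=\Gamma^{D_j}(\cdot,\bar w)m$, which are $\mathbb{C}_i$--Lam\'e solutions in $\Omega$ whenever $\bar y,\bar w\notin\overline{\Omega}$, combined with the Green-type identity of Lemma \ref{lem:21.0}, one obtains
\begin{equation*}
f(\bar y,\bar w;l,m)\;=\;\langle(\Lambda_{D_1}-\Lambda_{D_2})\Gamma^{D_2}(\cdot,\bar w)m,\Gamma^{D_1}(\cdot,\bar y)l\rangle.
\end{equation*}
Using Proposition \ref{prop:14.1} to estimate the traces on $\partial\Omega$ of $\Gamma^{D_i}(\cdot,\bar y)l$ in $H^{1/2}$ in terms of $\mathrm{dist}(\bar y,\Omega)$, this yields an initial pointwise bound $|f(\bar y,\bar w;l,m)|\leq C\epsilon/\rho_0$ for, say, $\bar y,\bar w\in S_{2\rho_0}$ kept at mutual distance $\gtrsim\rho_0$. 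This is the only place where \eqref{eq:4.1} enters.

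The next step propagates this smallness into the cone near $P$. By Lemma \ref{lem:21.1}, both $y\mapsto f(y,w;l,m)$ and $w\mapsto f(y,w;l,m)$ satisfy the \emph{constant--coefficient} Lam\'e system $\mathrm{div}(\mathbb{C}\nabla\cdot)=0$ in $\mathbb{R}^3\setminus\overline{\Omega_D}$ (away from the other source point), so the three--spheres inequality for such solutions is available on every ball contained in this set. The geometric skeleton is supplied by Lemma \ref{lem:8.1}: fixing an auxiliary $P_0\in S_{2\rho_0}$, we dispose of a path $\gamma$ and of a neighbourhood $V(\gamma)\subset\mathbb{R}^3\setminus\Omega_D$ consisting of a tube of radius $R=\overline d/\sqrt{1+L_0^2}$ attached to the truncated cone $C(P,\nu,(d^2-R^2)/d,\arcsin(R/d))$ with axis $-e_3$ and half-aperture $\widetilde\vartheta_0$.

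I would carry out the propagation in two stages. \emph{Stage A (propagation in $y$):} freeze $w=\bar w\in S_{2\rho_0}$ far from $\gamma$. Cover the tubular part of $V(\gamma)$ by a uniformly bounded chain of overlapping balls of radius $R$; the iterated three-spheres inequality produces $|f(y,\bar w;l,m)|\leq C\epsilon^{\delta}$ on a ball $B_R(P+\overline d\nu)$, with $\delta\in(0,1)$ depending only on a priori data. Then march into the cone along its axis by an on-axis chain $B_{r_k}(P-\rho_k e_3)$ with $r_k=\rho_k\sin\widetilde\vartheta_0/8$ and $\rho_k$ shrinking geometrically at the ratio
\begin{equation*}
\chi=\frac{1-\sin\widetilde\vartheta_0/8}{1+\sin\widetilde\vartheta_0/8};
\end{equation*}
after $N\simeq|\log(h/\overline d)|/|\log\chi|$ three-spheres steps the exponents telescope precisely into the form $C_1(h/\rho_0)^{C_2}$ of \eqref{eq:23.5}, and we land at $y=y_h=P-he_3$. \emph{Stage B (propagation in $w$):} with $y=y_h$ now frozen, repeat the same tube+cone chain for $w\mapsto f(y_h,w;l,m)$, ending at the on-axis point $w_h=P-\lambda_w h e_3$. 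A final conversion from $L^2$-ball smallness to a pointwise value at $w_h$, via the interior Schauder/Lipschitz estimate of Lemma \ref{lem:14BIS.1} applied on a ball of radius comparable to $\lambda_w h$, produces the $C/(\lambda_w h)$ prefactor in \eqref{eq:23.4}.

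The main obstacle is Stage B. The variable $w$ must be propagated past the frozen singular point $y_h=P-he_3$ of $(\Gamma^{D_2}-\Gamma^{D_1})(y_h,\cdot)$ and end at $w_h$, which lies on the same axis and nearer to $P$. To keep every ball of the final cone-chain inside $\mathbb{R}^3\setminus\overline{\Omega_D}$ \emph{and} at positive controlled distance from $y_h$, I would shift the axis of the final chain slightly off the $e_3$-axis by an angle proportional to $\sin\widetilde\vartheta_0$: this is exactly what the factor $1-\vartheta^*\sin\widetilde\vartheta_0/8$ in the definition of $A$ is designed to encode, and it fixes the starting scale of the $w$-cone iteration as $\simeq\lambda_w/\overline d\cdot\rho_0$, giving $N\simeq|\log A|/|\log\chi|$ iterations and hence the exponent $C_1(h/\rho_0)^{C_2}$ in \eqref{eq:23.5}. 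Once this geometric bookkeeping is in place, the remaining ingredients — Caccioppoli inequality, three-spheres, and interior regularity for the constant-coefficient Lam\'e system — are standard and enter only through constants depending on $\alpha_0,\gamma_0,\overline\lambda,\overline\mu$.
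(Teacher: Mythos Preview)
Your overall plan—two-stage propagation (one variable at a time) along a tube-then-cone chain, using the three-spheres inequality for the constant-coefficient Lam\'e system—is precisely the paper's strategy. However several of your explanations are incorrect, and one essential ingredient is missing.

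The ``main obstacle'' you identify in Stage B does not exist. The function $w\mapsto(\Gamma^{D_2}-\Gamma^{D_1})(y_h,w)$ has \emph{no singularity} at $w=y_h$: since $y_h\in\mathbb{R}^3\setminus\overline{\Omega_D}$, both fundamental matrices solve the \emph{same} constant-coefficient equation $\mathrm{div}_x(\mathbb{C}\nabla_x\Gamma^{D_i}(x,y_h))=-I\delta_{y_h}$ in a neighbourhood of $y_h$, so their difference is a homogeneous solution there, hence real-analytic. This is exactly why Lemma \ref{lem:21.1} is stated on all of $\mathbb{R}^3\setminus\overline{\Omega_D}$ with no point removed. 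Consequently no off-axis shift is needed, and that is not what $\vartheta^*$ encodes: $\vartheta^*$ is the admissible-radius factor $r_3\le\vartheta^*\bar r$ in the three-spheres inequality \eqref{eq:27.3}, and it enters $A$ only through the starting distance $d_*=\overline d(1-\vartheta^*\sin\vartheta_1)$ of the cone chain.

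The genuine gap in your sketch is the \emph{rough} a-priori bound that the three-spheres inequality requires on the large ball. The paper supplies this twice: first $|f(\bar y,w;l,m)|\le C/\rho_0$ for $\bar y\in S_{2\rho_0}$ and $w\in\overline{\Omega^{\rho_0}}\setminus\overline{\Omega_D}$ (Step 1), used when propagating $w$; then $|f(y,w_h;l,m)|\le C\rho_0/(\lambda_w h)\cdot\rho_0^{-1}$ for $y$ at distance $\ge h\sin\hat\vartheta_0$ from $\Omega_D$ (Step 3), used when propagating $y$. Both are obtained from the integral representation $f=S_{D_1}-S_{D_2}$ and the gradient bound \eqref{eq:14.5}, with no cancellation exploited. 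The prefactor $C/(\lambda_w h)$ in \eqref{eq:23.4} is exactly this second rough bound carried through the iteration; it is not an $L^2$-to-$L^\infty$ conversion (the three-spheres inequality \eqref{eq:27.3} is in $L^\infty$, so Lemma \ref{lem:14BIS.1} and Caccioppoli are not used here).

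Finally, the paper propagates $w$ first (Step 2, with $\bar y$ kept in $S_{2\rho_0}$) and $y$ second (Step 4, with $w=w_h$ fixed), the reverse of your order. Either order is legitimate, but the paper's choice makes the rough bound in the second stage depend only on the fixed $w_h$ and not on the position of the propagated variable $y$; this is what allows the iteration to telescope cleanly into the explicit constants \eqref{eq:23.5}.
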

\begin{theo}[Lower bound on the function $f$]
   \label{theo:55.1}

Under the notation of Lemma \ref{lem:8.1}, let
\begin{equation}
  \label{eq:55.1}
     y_h = P -h e_3.
\end{equation}
For every $i=1,2,3$, there exists $\lambda_w \in \left\{ \frac{2}{3}, \frac{3}{4}, \frac{4}{5}
\right\}$ and there exists $\overline{h} \in \left (0, \frac{1}{2} \right )$ only depending
on $M_0$, $\alpha$, $\alpha_0$, $\gamma_0$, $\overline{\lambda}$,
$\overline{\mu}$, $\eta_0$, such that
\begin{equation}
  \label{eq:55.2}
     |f(y_h, w_h; e_i, e_i)| \geq \frac{C}{h}, \quad \hbox{for every } h, \
     0<h<\overline{h}\rho,
\end{equation}
where
\begin{equation}
  \label{eq:55.3}
     w_h = P -\lambda_w h e_3,
\end{equation}
\begin{equation}
  \label{eq:55.4}
     \rho =  \min \left \{
     dist(P,D_2), \ \frac{\rho_0}{12 \sqrt{1+M_0^2}} \cdot \min \{1,M_0\}  \right
     \},
\end{equation}
and $C>0$ only depends on $M_0$, $\alpha$, $\alpha_0$, $\gamma_0$,
$\overline{\lambda}$, $\overline{\mu}$, $\eta_0$.
\end{theo}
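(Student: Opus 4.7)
The plan is to relate $f(y_h, w_h; e_i, e_i)$ to an explicit expression involving Rongved's half-space fundamental solution and then exploit the algebraic structure described in Section~\ref{Rongved}. Throughout, I work in the local frame centred at $P$ with $-e_3$ equal to the outer normal to $D_1$ at $P$, so that $\partial D_1$ is tangent to $\{x_3=0\}$ at the origin and the points $y_h=-he_3$, $w_h=-\lambda_w h e_3$ lie on the outer normal.

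\emph{Step 1 (Identification of the leading term).} By Lemma~\ref{lem:21.0},
\[
f(y_h, w_h; e_i, e_i) = (\Gamma^{D_2} - \Gamma^{D_1})_{ii}(y_h, w_h).
\]
Since $\mathrm{dist}(P, D_2)\geq \rho$ and $h<\overline{h}\rho$ with $\overline{h}<1/2$, both $y_h$ and $w_h$ stay uniformly away from $D_2$ and, at the same time, close to the interface $\partial D_1$. Appealing to the asymptotic evaluations developed in Section~\ref{asymptotic}, I decompose
\[
\Gamma^{D_2}(y_h, w_h) = \Gamma(y_h, w_h) + R_2(y_h, w_h),
\qquad
\Gamma^{D_1}(y_h, w_h) = \Gamma^{+}(y_h, w_h) + R_1(y_h, w_h),
\]
where $\Gamma$ is the Kelvin matrix with moduli $(\lambda,\mu)$, $\Gamma^{+}$ is Rongved's fundamental matrix for the half-space $\{x_3>0\}$ with exterior moduli $(\lambda,\mu)$ and interior moduli $(\lambda^I,\mu^I)$, and the remainders satisfy $|R_j(y_h,w_h)|\leq C h^{-1+\sigma}$ for some $\sigma=\sigma(\alpha)>0$. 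Indeed, $R_2$ is smooth on the relevant region since $y_h,w_h$ are bounded away from $D_2$, while $R_1$ captures the $C^{1,\alpha}$ error between $\partial D_1$ and its tangent plane at $P$.

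\emph{Step 2 (Reduction to Rongved's explicit formula).} Combining the two decompositions yields
\[
f(y_h, w_h; e_i, e_i) = -(\Gamma^{+} - \Gamma)_{ii}(y_h, w_h) + O(h^{-1+\sigma}).
\]
By homogeneity of Rongved's formulas in $h$, one has
\[
(\Gamma^{+} - \Gamma)_{ii}(-he_3, -\lambda_w h e_3) = \frac{1}{h}\,\Phi_i\bigl(\lambda_w;\mu,\nu,\mu^I,\nu^I\bigr),
\]
where $\Phi_i$ is an explicit rational function of $\lambda_w$ whose coefficients depend algebraically on the Lamé parameters. It therefore suffices to exhibit, for each $i\in\{1,2,3\}$, a value $\lambda_w\in\{2/3,3/4,4/5\}$ for which $|\Phi_i(\lambda_w;\cdot)|$ admits a lower bound depending only on $\eta_0$ and the a-priori bounds.

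\emph{Step 3 (Choice of $\lambda_w$ via the algebraic analysis of $\Gamma^{+}$).} This is where the peculiar features of the vector case force a nontrivial argument, and is the main obstacle. As discussed in Section~\ref{Rongved}, the naive choice $y=w$ (i.e.\ $\lambda_w=1$) fails in general: one can exhibit explicit Lamé moduli with $(\lambda-\lambda^I)^2+(\mu-\mu^I)^2>0$ for which $(\Gamma^{+}-\Gamma)_{ii}(y,y)\equiv 0$. What is true, and what Proposition~\ref{prop:62.1} and the computations of Section~\ref{Rongved} establish, is that for each $i$ the rational function $\lambda_w\mapsto\Phi_i(\lambda_w;\cdot)$ is not identically zero on $(0,1)$ under the assumption \eqref{eq:3.1}. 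Since $\Phi_i$ is a ratio of polynomials of bounded degree with coefficients depending continuously on the moduli, the three distinct probe values $\{2/3,3/4,4/5\}$ cannot simultaneously be close to all of its zeros: a simple compactness-plus-algebraic-degree argument (carried out with explicit constants in Section~\ref{proof_lowerbound}) yields $\max_{\lambda_w\in\{2/3,3/4,4/5\}}|\Phi_i(\lambda_w;\cdot)|\geq c(\eta_0)>0$, uniformly in the admissible moduli. Fixing the corresponding $\lambda_w$ and choosing $\overline{h}$ so small that the remainder $Ch^{-1+\sigma}$ is dominated by half of the leading term $c(\eta_0)/h$ gives \eqref{eq:55.2}.

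The hard step is, as already anticipated in the introduction, the quantitative, uniform nonvanishing in Step~3: one cannot invoke any positivity/monotonicity of $\Gamma^{+}-\Gamma$ because no order condition between $\C$ and $\C^I$ is imposed, and one must instead inspect the explicit expression of $\Gamma^{+}$ to identify, for each diagonal index $i$, a test value of $\lambda_w$ at which the leading asymptotics is guaranteed not to collapse.
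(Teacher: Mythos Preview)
Your route to the leading term differs from the paper's. You work pointwise, writing $f=(\Gamma^{D_2}-\Gamma^{D_1})_{ii}(y_h,w_h)$ via Lemma~\ref{lem:21.0} and then decomposing each fundamental matrix at the exterior pair $(y_h,w_h)$. The paper instead returns to the integral representation $f=S_{D_1}-S_{D_2}$ of \eqref{eq:20.1}--\eqref{eq:20.3}, replaces $\Gamma^{D_1}\to\Gamma^+$ and $\Gamma^{D_2}\to\Gamma$ \emph{inside the integrand} over $D_1\cap B_\rho$, and extracts the leading contribution $\tfrac{1}{h}(\Gamma-\Gamma^+)_{ii}(-e_3,-\lambda_w e_3)$ via the scaling of Lemma~\ref{lem:59.1} and the half-space identity of Lemma~\ref{lem:60.1}. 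The several remainder terms $R_1,R_2,R_3,I_1'',S_{D_2}$ are then estimated one by one using only Proposition~\ref{prop:14.1} and the \emph{gradient} asymptotics \eqref{eq:38.2} at points \emph{inside} $D_1$. Your Steps~2 and~3 (homogeneity and the use of Proposition~\ref{prop:62.1}) match the paper.

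Your direct decomposition is cleaner, but Step~1 has a gap: Theorem~\ref{theo:1.38} (specifically \eqref{eq:38.1}) is stated for $x\in Q_{\cdots}\cap D$, whereas you invoke it at $x=w_h=-\lambda_w h e_3$, which lies \emph{outside} $D_1$. The paper never needs, and never proves, a pointwise bound $|(\Gamma^{D_1}-\Gamma^+)(y_h,w_h)|\le Ch^{-1+\sigma}$ with both arguments on the outward normal; this is precisely why it detours through the integral. The gap is fillable---the representation formula \eqref{eq:43.1} and the estimates of $I_1,\dots,I_5$ in the proof of Theorem~\ref{theo:1.38} do not actually use the sign of $z_3$, and for $x=(0,0,-\lambda_w h)$ one has $\Phi(x)=x$ so the passage from $\widetilde R$ to $R$ is immediate---but you must argue this explicitly rather than merely cite Section~\ref{asymptotic}. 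Likewise, your claim that $R_2=(\Gamma^{D_2}-\Gamma)(y_h,w_h)$ is ``smooth'' needs a quantitative bound: since the delta sources cancel, $R_2(\cdot,w_h)$ solves the constant-coefficient Lam\'e system in $B_{\rho/2}(w_h)\subset\R^3\setminus\overline{D_2}$, and Fichera's maximum modulus theorem together with \eqref{eq:14.4} give $|R_2|\le C/\rho$ (this is exactly the paper's \eqref{eq:70.2}--\eqref{eq:70.5}); without that you cannot absorb $R_2$ into the $O(h^{-1+\sigma})$ remainder.
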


\medskip

\begin{proof} [Proof of Theorem \ref{theo:4.1}]
{}From the combination of the upper bound \eqref{eq:23.4}, with
$l=m=e_i$ for $i\in\{1,2,3\}$, and {}from the lower bound
\eqref{eq:55.2}, we have
\begin{equation}
  \label{eq:74.1}
   C \leq \epsilon ^{ C_1 \left (
     \frac{h}{\rho_0}\right)^{C_2}}, \quad \hbox{for every } h, \
     0<h\leq \overline{h}\rho,
\end{equation}
where $\rho$ is given in \eqref{eq:55.4}, the constants $C_1>0$,
$C_2>0$ are defined in \eqref{eq:23.5} and depend only on $M_0$, $\alpha$,
$M_1$, $\alpha_0$, $\gamma_0$, $\overline{\lambda}$,
$\overline{\mu}$, and the constants $C \in (0,1)$, $\overline{h}
\in \left ( 0, \frac{1}{2} \right )$ only depend on $M_0$,
$\alpha$, $\alpha_0$, $\gamma_0$, $\overline{\lambda}$,
$\overline{\mu}$, $\eta_0$.

Passing to the logarithm and recalling that $\epsilon \in (0,1)$,
we have
\begin{equation}
  \label{eq:74.2}
   h \leq C\rho_0 \left ( \frac{1}{ |\log \epsilon |  } \right
   )^{ \frac{1}{C_2} }, \quad \hbox{for every } h, \
     0<h\leq \overline{h}\rho,
\end{equation}
In particular, choosing $h = \overline{h}\rho$, we have
\begin{equation}
  \label{eq:75.1}
   \rho \leq C\rho_0 \left ( \frac{1}{ |\log \epsilon |  } \right
   )^{ \frac{1}{C_2} }.
\end{equation}
If $\rho = dist(P,D_2)$, by Lemma \ref{lem:6.1} and Lemma
\ref{lem:8.1}, the thesis follows. If, otherwise, $\rho=
\frac{\rho_0}{ 12 \sqrt{1+M_0^2}   } \min\{1,M_0\}$, the thesis
follows by noticing that $d_H(\partial D_1, \partial D_2) \leq
diam(\Omega)\leq C\rho_0$, with $C>0$ only depending on $M_0$,
$M_1$.
\end{proof}

\section{Proof of Theorem \ref{theo:1.23}}
\label{proof_upperbound}

The proof is divided into four steps.

\noindent \textit{\textbf{Step 1}. For any $y$, $w \in
S_{2\rho_0}$ and for any $l$, $m \in \R^3$, $|l|=|m|=1$, we have
\begin{equation}
  \label{eq:24.1}
   |f(y,w;l,m)| \leq C   \frac{\epsilon}{\rho_0},
\end{equation}
where $C>0$ only depends on $M_0$, $\alpha$, $M_1$, $\alpha_0$,
$\gamma_0$, $\overline{\lambda}$, $\overline{\mu}$.}

\textit{For any $y \in S_{2\rho_0}$, $w \in \overline{\Omega^{\rho_0}}
\setminus \overline{\Omega_D}$, and for every $l$, $m \in \R^3$, $|l|=|m|=1$,
we have
\begin{equation}
  \label{eq:25.1}
   |f(y,w;l,m)| \leq  \frac{C}{\rho_0},
\end{equation}
where $C>0$ only depends on $M_0$, $\alpha$, $M_1$, $\alpha_0$,
$\gamma_0$, $\overline{\lambda}$, $\overline{\mu}$.}

\medskip
\begin{proof} [Proof of Step 1]
When $y$, $w \in \R^3\setminus \overline{\Omega}$, we may apply
the identity \eqref{eq:13.1} with $u_1(\cdot) =
\Gamma^{D_1}(\cdot,y)l$, $u_2(\cdot) = \Gamma^{D_2}(\cdot,w)m$
obtaining
\begin{equation}
  \label{eq:20.4}
     f(y,w;l,m)= \int_{\partial \Omega} (\Gamma^{D_1}(x,y)l) \cdot
     (\Lambda_{D_1}- \Lambda_{D_2})(
     \Gamma^{D_2}(x,w)m).
\end{equation}

By \eqref{eq:20.4} and by \eqref{eq:4.1}, we have
\begin{equation*}
   |f(y,w;l,m)| \leq \frac{\epsilon}{\rho_0}\|\Gamma^{D_1}(\cdot,y)l\|_{H^{\frac{1}{2}}(\partial\Omega)}
   \|\Gamma^{D_2}(\cdot,w)m\|_{H^{\frac{1}{2}}(\partial\Omega)}.
\end{equation*}
By \eqref{eq:14.4} and \eqref{eq:14.5}, we have
\begin{multline*}
   \|\Gamma^{D_1}(\cdot,y)l\|_{H^{1/2}(\partial\Omega)}\leq
   \|\Gamma^{D_1}(\cdot,y)l\|_{H^1(\partial\Omega)}=\\
   =\left(\int_{\partial\Omega}|\Gamma^{D_1}(x,y)|^2
   +\rho_0^2|\nabla\Gamma^{D_1}(x,y)|^2\right)^\frac{1}{2}\leq\\
   \leq C\left(\int_{\partial\Omega}|x-y|^{-2}+\rho_0^2|x-y|^{-4}\right)^\frac{1}{2}
\end{multline*}
Noticing that, for any $x \in \partial \Omega$, $|x-y| \geq
\rho_0$, and estimating $|\partial \Omega|$
as
\begin{equation}
  \label{eq:24.2}
   |\partial \Omega| \leq C \rho_0^2,
\end{equation}
where $C>0$ only depends on $M_0$, $\alpha$, $M_1$, it follows
that
\begin{equation*}
\|\Gamma^{D_1}(\cdot,y)l\|_{H^{1/2}(\partial\Omega)}\leq C,
\end{equation*}
with $C>0$ only depending on $M_0$, $\alpha$, $M_1$, $\alpha_0$,
$\gamma_0$, $\overline{\lambda}$, $\overline{\mu}$. Since for any
$x \in \partial \Omega$, $|x-w| \geq \rho_0$, a similar estimate
holds for $\|\Gamma^{D_2}(\cdot,w)m\|_{H^{1/2}(\partial\Omega)}$,  and
\eqref{eq:24.1} follows.

Let $y \in S_{2\rho_0}$, $w \in \overline{\Omega^{\rho_0}}
\setminus \overline{\Omega_D}$. By \eqref{eq:14.5} we have
\begin{equation}
  \label{eq:25.2}
   |f(y,w;l,m)| \leq C \sum_{i=1}^2 \int_{D_i}
   |x-y|^{-2}|x-w|^{-2},
\end{equation}
where $C>0$ only depends on $M_0$, $\alpha$, $\alpha_0$,
$\gamma_0$, $\overline{\lambda}$, $\overline{\mu}$. Since
$|x-y|\geq \rho_0$, we have
\begin{equation}
  \label{eq:25.3}
   |f(y,w;l,m)| \leq C\rho_0^{-2} \sum_{i=1}^2 \int_{D_i}
   |x-w|^{-2} \equiv C\rho_0^{-2} (I_1 +I_2).
\end{equation}
Let $R=\hbox{diam}(\Omega) + \rho_0 \leq C \rho_0$, with $C>0$
only depending on $M_0$, $\alpha$, $M_1$. Then, $\Omega \subset
B_R(w)$ and
\begin{equation}
  \label{eq:25.4}
   I_i \leq \int_{B_R(w)} |x-w|^{-2} =2\pi^2 R \leq C\rho_0, \quad
   i=1,2,
\end{equation}
and \eqref{eq:25.1} follows.
\end{proof}

\medskip

\textit{\textbf{Step 2.} For any $\overline{y} \in S_{2\rho_0}$,
for every $l$, $m \in \R^3$, $|l|=|m|=1$, we have
\begin{equation}
  \label{eq:26BIS.1}
    |f(\overline{y},w_h;l,m)| \leq \frac{C}{\rho_0} \epsilon^\eta,
\end{equation}
where
\begin{equation}
  \label{eq:26BIS.2}
    \eta = \beta
    \delta^
    { \frac{ \left |   \log  \frac{\lambda_w h}{d_*}  \right | }{  |\log
    \chi|}+ 1 },
\end{equation}
and $\gamma_0$, $\overline{\lambda}$, $\overline{\mu}$; $\chi$,
$0<\chi<1$, only depends on $M_0$, $\alpha$; $d_*$, $0<d_*<\overline{d}$,
only depends on $M_0$, $\alpha$, $\alpha_0$, $\gamma_0$,
$\overline{\lambda}$, $\overline{\mu}$, where $\overline{d}$ has been
introduced in Lemma \ref{lem:8.1}; $\beta$, $0<\beta<1$, $C>0$
only depend on $M_0$, $\alpha$, $M_1$, $\alpha_0$, $\gamma_0$,
$\overline{\lambda}$, $\overline{\mu}$.}

\medskip
\begin{proof} [Proof of Step 2]

Let us fix $\overline{y}$, $\overline{w} \in S_{2\rho_0}$.

By Lemma \ref{lem:8.1}, there exists a path $\gamma \subset \left
( \overline{\Omega^{\rho_0}} \cup S_{2\rho_0} \right ) \setminus
\overline{\Omega_D}$ joining $\overline{w}$ to $Q=P-\overline{d}e_3$, such
that
\begin{equation}
  \label{eq:26.5}
   V(\gamma)  \subset \R^3 \setminus \Omega_D,
\end{equation}
when $R=\frac{\overline{d}}{\sqrt{1+L_0^2} }$. Note that $\arcsin
\frac{R}{\overline{d}}=\widetilde{\vartheta}_0$ as defined in the
statement of Theorem \ref{theo:1.23}.

Recalling Lemma \ref{lem:21.1}, we know that the vector-valued
function $f=(f_1,f_2,f_3)$, where
$f_k(\cdot;l)=f(\overline{y},\cdot;l,e_k)$, $k=1,2,3$, satisfies
the Lam\'{e} system with constant coefficients
\begin{equation}
  \label{eq:27.2}
  \divrg_w(\C\nabla_w f)=0, \quad \hbox{in } \R^3 \setminus \overline{\Omega_D}.
\end{equation}
At this stage, a basic tool is the following three spheres
inequality for solutions to the Lam\'{e} system \eqref{eq:27.2} in
$B_{\overline{r}}(x) \subset \R^3 \setminus \Omega_D$: there
exists $\vartheta^*$, $0<\vartheta^*\leq 1$, only depending on
$\alpha_0$, $\gamma_0$, $\overline{\lambda}$, $\overline{\mu}$,
such that for every $r_1$, $r_2$, $r_3$,
$0<r_1<r_2<r_3\leq \vartheta^* \overline{r}$, we have
\begin{equation}
  \label{eq:27.3}
   \|f\|_{L^\infty (B_{r_2}(x))} \leq C \|f\|_{L^\infty
   (B_{r_1}(x))}^\delta \cdot \|f\|_{L^\infty
   (B_{r_3}(x))}^{1-\delta},
\end{equation}
where $C>0$ and $\delta$, $0<\delta<1$, only depend on $\alpha_0$,
$\gamma_0$, $\overline{\lambda}$, $\overline{\mu}$, $ \frac{
r_2}{r_3}$,  $ \frac{ r_1}{r_3}$.

Let us choose $r_1= \frac{\vartheta^*\overline{d}}{4}$,
$r_2=3r_1$, $r_3=4r_1$. Let $x_1=\overline{w}$ and let us
define $\{x_i\}$, $i=1,...,s$, as follows: $x_1=\overline{w}$,
$x_{i+1}=\gamma(t_i)$, where $t_i = \max \left \{ t | \ |\gamma
(t)-x_i|=r_1  \right \}$ if $|x_i -Q| > 2r_1$; otherwise, let
$i=s$ and stop the process. By construction, the balls
$B_{r_1}(x_i)$ are pairwise disjoint, $|x_{i+1}-x_i| =2r_1$ for
$i=1,...,s-1$, $|x_s-Q| \leq 2r_1$. Hence, we have
\begin{equation}
  \label{eq:28.0}
   s \leq C \left ( \frac{\rho_0}{r_1} \right )^3\leq C',
\end{equation}
where $C'>0$ only depends on $M_0$, $\alpha$, $M_1$, $\alpha_0$,
$\gamma_0$, $\overline{\lambda}$, $\overline{\mu}$. An iterated
application of \eqref{eq:27.3} and estimates \eqref{eq:24.1},
\eqref{eq:25.1} give
\begin{equation}
  \label{eq:28.1}
    \|f(\cdot;l)\|_{L^\infty (B_{r_1}(Q))}
    \leq
    C\left ( \frac{1}{\rho_0}\right )^{(1-\delta^s)}\cdot
    \|f(\cdot;l)\|_{L^\infty
    (B_{r_1}(\overline{w}))}^{\delta^s}
    \leq \frac{C}{\rho_0} \epsilon^\beta,
\end{equation}
where the constant $C>0$ depends only on $M_0$, $\alpha$, $M_1$,
$\alpha_0$, $\gamma_0$, $\overline{\lambda}$, $\overline{\mu}$,
and the constant $\beta$, $0<\beta<1$, only depends on $M_0$,
$\alpha$, $M_1$, $\alpha_0$, $\gamma_0$, $\overline{\lambda}$,
$\overline{\mu}$.

Let us denote
\begin{equation}
  \label{eq:28.2}
   \lambda_1=\overline{d},
\end{equation}
\begin{equation}
  \label{eq:28.4}
   \vartheta_1= \arcsin \left ( \frac{\sin \widetilde{\vartheta}_0}{4} \right ),
\end{equation}
\begin{equation}
  \label{eq:28.5}
   w_1=Q=P-\lambda_1 e_3,
\end{equation}
\begin{equation}
  \label{eq:28.6}
   \rho_1 = \vartheta^* \lambda_1 \sin \vartheta_1.
\end{equation}
In order to approach $w_h$, we construct a sequence of balls
contained in the cone $C \left ( P, -e_3, \frac{\overline{d}^2-R^2}{\overline{d}},
\arcsin \frac{R}{\overline{d}} \right )$, with $R= \frac{\overline{d}}{ \sqrt{1+L_0^2}
}$, as follows. Let us define, for $k \geq 2$,
\begin{equation}
  \label{eq:29.1}
   w_k=P-\lambda_k e_3,
\end{equation}
\begin{equation}
  \label{eq:29.2}
   \lambda_k= \chi\lambda_{k-1},
\end{equation}
\begin{equation}
  \label{eq:29.3}
   \rho_k= \chi\rho_{k-1},
\end{equation}
with
\begin{equation}
  \label{eq:29.4}
   \chi= \frac{ 1-\sin \vartheta_1  }{1+ \sin \vartheta_1}.
\end{equation}
We have that
\begin{equation}
  \label{eq:29.5}
   \rho_k= \chi^{k-1}\rho_1,
\end{equation}
\begin{equation}
  \label{eq:29.6}
   \lambda_k= \chi^{k-1}\lambda_1,
\end{equation}
\begin{equation}
  \label{eq:29.7}
   B_{\rho_{k+1}}(w_{k+1}) \subset B_{3\rho_k} (w_k).
\end{equation}
Denoting
\begin{equation}
  \label{eq:29.8}
   d(k)=|w_k -P| -\rho_k,
\end{equation}
we have
\begin{equation}
  \label{eq:29.9}
   d(k)= \chi^{k-1}d_*,
\end{equation}
with
\begin{equation}
  \label{eq:29.10}
   d_*=\lambda_1 (1-\vartheta^*\sin \vartheta_1).
\end{equation}
For any $t$, $0<t<d_*$, let $k(t)$ the smallest positive integer
such that $d(k) \leq t$, that is
\begin{equation}
  \label{eq:29.11}
    \frac
    {  \left | \log   \frac{t}{d_*}  \right |  }
    {|\log \chi |}
    \leq k(t)-1
    \leq
    \frac
    {  \left | \log   \frac{t}{d_*}  \right |  }
    {|\log \chi |} +1.
\end{equation}
By applying the three spheres inequality \eqref{eq:27.3} over the balls
centered at $w_j$ with radii $\rho_j$, $3\rho_j$, $4\rho_j$, for
$j=1,...,k(t)-1$, we obtain
\begin{equation}
  \label{eq:30.1}
    \|f(\cdot;l)\|_{L^\infty (B_{\rho_{k(t)}}(w_{k(t)}))}
    \leq
    \frac{C}{\rho_0} \epsilon^{\beta \delta^{k(t)-1}},
\end{equation}
where the constant $C>0$ only depends on $M_0$, $\alpha$, $M_1$,
$\alpha_0$, $\gamma_0$, $\overline{\lambda}$, $\overline{\mu}$. In
particular, in view of \eqref{eq:23.3}, inequality \eqref{eq:30.1}
holds with $t=\lambda_wh$, and we have
\begin{equation}
  \label{eq:30.2}
    |f(w_h;l)| \leq \frac{C}{\rho_0} \epsilon^\eta,
\end{equation}
with $\eta$ given by \eqref{eq:26BIS.2}.

For any $m \in \R^3$, $|m|=1$, by linearity of
$f(\overline{y},w_h;l,m)$ with respect to the last variable, we
have
\begin{equation}
  \label{eq:30.3}
    |f(\overline{y},w_h;l,m)| =\left | \sum_{k=1}^3
    m_kf_k(w_h;l) \right | \leq |m| \cdot
    |f(w_h;l)|,
\end{equation}
and, by \eqref{eq:30.2}, the thesis follows.
\end{proof}

\medskip

At this stage, in order to estimate $f(y_h, w_h; l,m)$ when $y_h$,
$w_h$ are defined by \eqref{eq:23.1}, \eqref{eq:23.2}, we shall
propagate the smallness with respect to the first variable, by
iterating the three spheres inequality over suitable chains of
balls. As in Step i), we need a preliminary rough
estimate of $f(y,w_h;l,m)$ for any $y \in \R^3\setminus \Omega_D$.
However, since such an estimate degenerates when $y$ approaches
$\Omega_D$, we have to restrict our analysis to points $y$
sufficiently far {}from $\Omega_D$. Precisely, we consider the set $\widehat{V}(\gamma)$
obtained reducing the width
of the set $V(\gamma)$ appearing in \eqref{eq:26.5} by replacing in its definition
$R= \frac{\overline{d}}{\sqrt{1+L_0^2} }$ with $\widehat{R}= \frac{\overline{d}}{2\sqrt{1+L_0^2} }$.
Let us denote $\widehat{\vartheta}_0=\arcsin
   \frac{\widehat{R}}{\overline{d}}= \frac{1}{2\sqrt{1+L_0^2}}$.

In Step 4, we shall apply the three-spheres inequality
on a chain of balls contained in $\widehat{V}(\gamma)$ and centered
at points belonging either to the arc $\gamma$ or to the segment
joining $Q$ to $y_h=P- h e_3$. By a straightforward
computation, the distance {}from $\Omega_D$ of the points of all
these balls is, at least,
\begin{equation}
  \label{eq:32.1}
     h \sin \widehat{\vartheta}_0 =  \frac{1}{2}h
    \sin \widetilde{\vartheta}_0.
\end{equation}

\medskip

\textit{\textbf{Step 3.} For any $y\in \overline{\Omega^{\rho_0}}
\setminus \Omega_D^{ h \sin \widehat{\vartheta}_0}$ and for any
$l$, $m \in \R^3$, $|l|=|m|=1$, we have
\begin{equation}
  \label{eq:32.2}
    |f(y,w_h;l,m)| \leq \frac{C}{\rho_0 \lambda_w} \left ( \frac{\rho_0}{ h }
    \right ),
\end{equation}
where $C>0$ only depends on $M_0$, $\alpha$, $M_1$, $\alpha_0$,
$\gamma_0$, $\overline{\lambda}$, $\overline{\mu}$.}

\medskip

\medskip
\begin{proof} [Proof of Step 3]

By \eqref{eq:14.5} and \eqref{eq:20.3} we have
\begin{equation}
  \label{eq:32.3}
    |f(y,w_h;l,m)| \leq C \sum_{i=1}^2 \int_{D_i}
    |x-y|^{-2}|x-w_h|^{-2},
\end{equation}
where $C>0$ only depends on $M_0$, $\alpha$, $\alpha_0$,
$\gamma_0$, $\overline{\lambda}$, $\overline{\mu}$. By H\"{o}lder
inequality we have
\begin{equation}
  \label{eq:32.4}
    \int_{D_i} |x-y|^{-2}|x-w_h|^{-2} \leq \left ( \int_{D_i}
    |x-y|^{-4} \right )^{ \frac{1}{2}} \left ( \int_{D_i}
    |x-w_h|^{-4} \right )^{ \frac{1}{2}},
\end{equation}
$i=1,2$. Let $R=\hbox{diam}(\Omega)+\rho_0 \leq C \rho_0$, with
$C>0$ only depending on $M_0$, $\alpha$, $M_1$. Then, $\Omega
\subset B_R(y)$ and $\Omega \subset B_R(w_h)$. Since $|x-y|\geq \sin
\widehat{\vartheta}_0 h$ for every $x\in\Omega_D$, we have
\begin{equation}
  \label{eq:33.1}
    \int_{D_i}  |x-y|^{-4} \leq
    \int_{B_R(y) \setminus B_{ \sin
    \widehat{\vartheta}_0 h}(y)} |x-y|^{-4} \leq \frac{C}{
    h}, \quad i=1,2,
\end{equation}
where $C>0$ only depends on $M_0$, $\alpha$. Similarly, since $|x-w_h|\geq \lambda_w \sin
\widehat{{\vartheta}}_0 h$, we have
\begin{equation}
  \label{eq:33.2}
    \int_{D_i}  |x-w_h|^{-4} \leq \frac{C}{\lambda_w
    h}, \quad i=1,2,
\end{equation}
and \eqref{eq:32.2} follows.
\end{proof}

\medskip

\textit{\textbf{Step 4.} Conclusion.}

Let $\widetilde{y} \in S_{2\rho_0}$ such that
$\hbox{dist}(\widetilde{y}, \partial \Omega) = \frac{3}{2}\rho_0$,
so that $B_{ \frac{\rho_0}{2}}( \widetilde{y}) \subset
S_{2\rho_0}$ and, by \eqref{eq:26BIS.1} of Step 2,
\begin{equation}
  \label{eq:34.1}
    \|f(\cdot ,w_h;l,m)\|_
    {L^\infty ( B_{  \frac{ \rho_0}{2}}(\widetilde{y} ))}
    \leq
    \frac{C}{\rho_0} \epsilon^
    \eta,
\end{equation}
where
\begin{equation}
  \label{eq:34.1BIS}
    \eta =
    \beta
    \delta^{\frac{ \left | \log   \frac{\lambda_w h}{d_*} \right |    }{ |\log \chi|  } +1}.
\end{equation}
By Lemma \ref{lem:8.1}, there exists a path $\gamma \subset (
\overline{\Omega^{\rho_0}} \cup S_{2\rho_0} ) \setminus
\overline{\Omega}_D$ joining $\widetilde{y}$ to $Q=P-\overline{d}e_3$, such
that $V(\gamma) \subset \R^3 \setminus \Omega_D$, where $V(\gamma)$ is
defined by \eqref{eq:26.5}.

By Lemma \ref{lem:21.1}, the vector-valued function
$\widetilde{f}=(\widetilde{f}_1, \widetilde{f}_2,
\widetilde{f}_3)$, where $\widetilde{f}_k(\cdot,m)=f(\cdot,
w_h;e_k,m)$, $k=1,2,3$, satisfies the Lam\'{e} system
\eqref{eq:27.2}, with constant coefficients
$\lambda$, $\mu$. Then, we can repeat the propagation of smallness
arguments of Step 2 with the following modifications
\begin{equation}
  \label{eq:34.2}
   r_1= \frac{\vartheta^*\overline{d}}{8}, \quad \vartheta_1=\arcsin
    \left ( \frac{\sin \widetilde{\vartheta}_0}{8} \right ),
\end{equation}
ensuring that the
geometrical construction takes place inside the set $\widehat{V}(\gamma)$ as specified
in the previous Step 2. Therefore, estimate \eqref{eq:32.2} holds for
every point $y$ belonging to the chain of balls.

By repeating the arguments of Step 2, in view of the estimates \eqref{eq:32.2}
and \eqref{eq:34.1}, the analogous of
\eqref{eq:28.1} becomes
\begin{equation}
  \label{eq:35.1}
    \|\widetilde{f}(\cdot,w_h)\|_{L^\infty (B_{\rho}(Q))}
    \leq
    \frac{C}{ \rho_0 \lambda_w  }
    \left ( \frac{\rho_0}{h} \right )
    \epsilon^
    {\widetilde{\eta}
    },
\end{equation}
where
\begin{equation}
  \label{eq:35.1BIS}
    \widetilde{\eta}=
    \widetilde{\beta}
    \delta^
    {
    \frac{ \left |  \log   \frac{ \lambda_w h  }{d_*} \right |  }
    { |\log \chi|  }+1
    },
\end{equation}
and $C>0$ and $\widetilde{\beta}$, $0<\widetilde{\beta}<1$, only
depend on $M_0$, $\alpha$, $M_1$, $\alpha_0$, $\gamma_0$,
$\overline{\lambda}$, $\overline{\mu}$, and $\delta$, $\chi$,
$d_*$ are the quantities appearing in \eqref{eq:26BIS.1}.

Finally, by adapting the geometrical construction seen above to
the chain of balls joining $Q$ to $y_h$ inside $\widehat{V}(\gamma)$,
recalling \eqref{eq:32.1}
and noticing that the new values of $\chi$ and $d_*$ are bigger
than the previous ones, we have
\begin{equation}
  \label{eq:35.2}
    |\widetilde{f}(y_h,w_h)|
    \leq
    \frac{C}{\rho_0 \lambda_w }
    \left ( \frac{\rho_0}{h} \right )
    \epsilon^
    {
    \widehat{\eta}},
\end{equation}
where
\begin{equation}
  \label{eq:35.2BIS}
   \widehat{\eta} =
    \gamma
    \delta^
    {
    2 +\frac{2}{|\log \chi|}
    \left |  \log   \frac{ \lambda_w h  }{ \left ( 1-\vartheta^* \frac{\sin \widetilde{\vartheta}_0}{8} \right )\overline{d}   } \right |
     },
\end{equation}
and $\chi = \frac{ 1 - \frac{\sin \widetilde{\vartheta}_0}{8}
}{    1 + \frac{\sin \widetilde{\vartheta}_0}{8}     }$; $\gamma
>0$ only depends on $M_0$, $\alpha$, $M_1$, $\alpha_0$,
$\gamma_0$, $\overline{\lambda}$, $\overline{\mu}$; and $C>0$ only
depends on $M_0$, $\alpha$, $M_1$, $\alpha_0$, $\gamma_0$,
$\overline{\lambda}$, $\overline{\mu}$.

By linearity of $\widetilde{f}(y,w;l,m)$ with respect to the third
argument, the bound \eqref{eq:35.2} holds also for $f(y_h, w_h;l, m)$,
for every $l$, $m \in \R^3$, $|l|=|m|=1$.

Introducing
\begin{equation}
  \label{eq:36.1}
   A= \frac{ \lambda_w   }{ \frac{\overline{d}}{\rho_0} (1
- \vartheta^* \frac{ \sin \widetilde{\vartheta}_0}{8}) },\qquad B= \frac{2}{|\log \chi|},
\end{equation}
we may rewrite the second factor in the right hand side of
\eqref{eq:35.2} as
\begin{equation}
  \label{eq:36.3}
   \epsilon^
   {
   \gamma
   \delta^2
   \delta^
   {B \left| \log A\left ( \frac{h}{\rho_0}\right ) \right |}
   }
   \leq
   \epsilon^
   {\gamma
   \delta^{2 +B |\log A|}
   \left (
   \frac{h}{\rho_0}
   \right )^{B|\log \delta|}
   },
\end{equation}
which gives the desired estimate \eqref{eq:23.4}.

\section{Asymptotics of $\Gamma^D$}
\label{asymptotic}

Given a bounded domain $D$ with boundary $\partial D$ of class
$C^{1,\alpha}$, with constants $\rho_0$, $M_0$, $0<\alpha \leq 1$,
let $O \in \partial D$ and $\nu=\nu(O)$ the outer unit normal to
$D$ at $O$.

Let us choose a coordinate system with origin $O$ and axis $e_3 =
-\nu$, and let $\Gamma^+(x,y)=\Gamma^{\R^3_+}(x,y)$ the
normalized fundamental matrix associated to $D=\R^3_+$.
We recall that its explicit expression was found by Rongved \cite{Ron55}.
See also Section \ref{Rongved} where Rongved's formulas shall be used in more detail.

Recalling the notation $u^D(x,y)=\Gamma^D(x,y)l$ (see
\eqref{eq:14.2}) and defining similarly $u^+(x,y)=\Gamma^+(x,y)l$,
for any $l\in \R^3$, $|l|=1$, let us prove an asymptotic
approximation of $u^D$ in terms of $u^+$.

\begin{theo}
   \label{theo:1.38}
Let $y=(0,0,-h)$, $0<h<\frac{\rho_0 M_0}{8\sqrt{1+M_0^2}}$. Under the above assumptions and notation, we have
\begin{equation}
  \label{eq:38.1}
   |u^D(x,y)-u^+(x,y)| \leq \frac{C}{\rho_0}
   \left (
   \frac{|x-y|}{\rho_0}
   \right )^{-1+\alpha},
   \quad \forall x \in Q_{ \frac{\rho_0}{8 \sqrt{1+M_0^2}}, \frac{\rho_0 M_0}{8
   \sqrt{1+M_0^2}}
   }\cap D,
\end{equation}
\begin{equation}
  \label{eq:38.2}
   |\nabla_x u^D(x,y)-\nabla_x u^+(x,y)| \leq \frac{C}{\rho_0^2}
   \left (
   \frac{|x-y|}{\rho_0}
   \right )^{-2 +  \frac{\alpha^2}{3\alpha+2}  },\quad
   \forall x \in Q^+_{ \frac{\rho_0}{12 \sqrt{1+M_0^2}}, \frac{\rho_0 M_0}{12
   \sqrt{1+M_0^2}}
   }\cap D,
\end{equation}
where $C>0$ only depends on $M_0$, $\alpha$, $\alpha_0$,
$\gamma_0$, $\overline{\lambda}$, $\overline{\mu}$.
\end{theo}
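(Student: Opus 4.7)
The plan is to introduce the perturbation $v(x)=u^D(x,y)-u^+(x,y)$ which, by subtracting the two defining Dirac-sourced equations so that the two singular right-hand sides at $y$ cancel, is a weak solution in all of $\R^3$ of
\begin{equation*}
\divrg\bigl((\C+(\C^I-\C)\chi_D)\nabla v\bigr) = -\divrg\Bigl((\chi_D-\chi_{\R^3_+})(\C^I-\C)\nabla_\xi u^+(\cdot,y)\Bigr),
\end{equation*}
whose right-hand side is supported on the symmetric difference $D\triangle\R^3_+$. Since $v$ decays at infinity, testing this equation against $\Gamma^D(\cdot,x)e_i$ yields the representation
\begin{equation*}
v(x)\cdot e_i=\int_{D\triangle\R^3_+}\sigma(\xi)\,(\C^I-\C)\nabla_\xi u^+(\xi,y)\cdot \nabla_\xi\Gamma^D(\xi,x)e_i\,d\xi,
\end{equation*}
with $\sigma=\chi_D-\chi_{\R^3_+}\in\{-1,+1\}$. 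This integral identity is the central object of the whole argument.

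To derive \eqref{eq:38.1}, I will exploit the $C^{1,\alpha}$ regularity of $\partial D$ at $O$: since $\varphi(0)=0$ and $\nabla\varphi(0)=0$, the normalization in Remark \ref{rem:2.1} forces $|\varphi(\xi')|\leq CM_0\rho_0^{-\alpha}|\xi'|^{1+\alpha}$, so the strip $D\triangle\R^3_+$ has transverse thickness of order $\rho_0^{-\alpha}|\xi'|^{1+\alpha}$ near $O$. Plugging the bounds $|\nabla_\xi\Gamma^D(\xi,x)|\leq C|\xi-x|^{-2}$ and $|\nabla_\xi u^+(\xi,y)|\leq C|\xi-y|^{-2}$ from Proposition \ref{prop:14.1} (applied also with $D$ replaced by $\R^3_+$) into the representation and integrating out the thin $\xi_3$ variable reduces the task to a two-dimensional estimate of the form
\begin{equation*}
|v(x)|\leq C\rho_0^{-\alpha}\int_{\R^2}\frac{|\xi'|^{1+\alpha}\,d\xi'}{|\xi-x|^{2}|\xi-y|^{2}} + (\text{bulk contributions}),
\end{equation*}
where $\xi$ stands for a representative point of the strip at transverse position $\xi'$. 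Splitting the $\xi'$-plane according to whether $|\xi'|\lesssim|x-y|$ or $|\xi'|\gtrsim|x-y|$, and using the elementary lower bounds $|\xi-x|\gtrsim|\xi'-x'|\vee\hbox{dist}(x,\partial D)$ and analogously for $y$, a careful polar computation should produce the sharp decay $(C/\rho_0)(|x-y|/\rho_0)^{-1+\alpha}$. The part of $D\triangle\R^3_+$ lying away from the graph region of $\varphi$ contributes only $C/\rho_0$ by crude kernel bounds, which is absorbed into the main term.

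For \eqref{eq:38.2}, I will combine the $L^\infty$ estimate just obtained with Lemma \ref{lem:14BIS.1} applied to $v$ on a cylinder $Q_{\rho,\rho M_0}(x)$ of an intermediate scale $\rho\lesssim|x-y|/2$, on which $v$ is a solution of a homogeneous Lam\'e transmission problem (both $\chi_D$ and $\chi_{\R^3_+}$ enter, but the source term vanishes because the strip does not meet such a cylinder away from $O$). The lemma delivers simultaneously the sup-norm of $\nabla v$ and a H\"older seminorm $[\nabla v]_\beta$ with $\beta=\alpha/(2(1+\alpha))$, both controlled by $\|v\|_{L^2(Q_{2\rho,2\rho M_0}(x))}\leq C\rho^{3/2}\|v\|_{L^\infty(Q_{2\rho,2\rho M_0}(x))}$ and hence, via the first step, by $\rho^{3/2}\rho_0^{-1}(|x-y|/\rho_0)^{-1+\alpha}$. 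Schauder-interpolating $[\nabla v]_\beta$ against the pointwise bound on $v$ and optimizing the free scale $\rho$ against $|x-y|$ produces the stated exponent $-2+\alpha^2/(3\alpha+2)$. The main obstacle I anticipate is the strip integration in the first step: near $y$ the integrand is doubly singular, so the splitting of the $\xi'$-plane must be sharp enough to extract exactly $-1+\alpha$ without logarithmic losses; a secondary difficulty is the precise exponent bookkeeping in the Schauder interpolation for the gradient, which has to be carried out on each side of $\partial D$ separately and whose scale optimization is what produces the peculiar combination $\alpha^2/(3\alpha+2)$.
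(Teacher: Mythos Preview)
Your representation formula for $v=u^D-u^+$ is correct and gives a genuinely different route to \eqref{eq:38.1} than the paper's. The paper first flattens $\partial D$ by a $C^{1,\alpha}$ diffeomorphism $\Phi$, works with $\widetilde R=\widetilde u^D-u^+$ in the flattened variables, and represents $\widetilde R$ through Green's formula with $\Gamma^+$; the source there is $|\widetilde J-Id|\lesssim\rho_0^{-\alpha}|\xi'|^\alpha$ spread over the whole cylinder. Your approach avoids the change of variables: you keep the original coordinates, use $\Gamma^D$ as kernel, and the source is the full kernel product on the thin strip $D\triangle\R^3_+$ of height $\lesssim\rho_0^{-\alpha}|\xi'|^{1+\alpha}$. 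Both lead to integrals of the same total homogeneity and the same bound $(|x-y|/\rho_0)^{-1+\alpha}$; your version is more direct, the paper's has the advantage that the subsequent estimates take place across a single flat interface.

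There is, however, a genuine gap in your plan for \eqref{eq:38.2}. You propose to apply Lemma~\ref{lem:14BIS.1} to $v$ on a cylinder $Q_{\rho,\rho M_0}(x)$ with $\rho\sim|x-y|$, asserting that the source term vanishes there. This is false in general: for $x\in Q^+\cap D$ with $x_3>0$ small (nothing prevents $x_3\ll|x-y|$), any cylinder of radius comparable to $|x-y|$ around $x$ crosses the region between $\partial D$ and $\{x_3=0\}$, hence contains part of the strip $D\triangle\R^3_+$, and the right-hand side $\divrg\bigl((\chi_D-\chi_+)(\C^I-\C)\nabla u^+\bigr)$ is not zero there. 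Lemma~\ref{lem:14BIS.1} is stated only for homogeneous solutions and does not apply to $v$ on that cylinder. Worse, even if it did, your interpolation would be circular: obtaining $[\nabla v]_\beta$ from $\|v\|_{L^\infty}$ via Lemma~\ref{lem:14BIS.1} and then interpolating back against $\|v\|_{L^\infty}$ just reproduces the crude bound $\|\nabla v\|_{L^\infty}\lesssim\rho^{-1}\|v\|_{L^\infty}$ and cannot produce the exponent $-2+\alpha^2/(3\alpha+2)$.

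The fix is simple and is exactly what the paper does: do not apply Lemma~\ref{lem:14BIS.1} to $v$. Instead, by the triangle inequality
\[
|\nabla v|_{\beta,\,Q_D^+(x)}\le |\nabla u^D|_{\beta,\,Q_D^+(x)}+|\nabla u^+|_{\beta,\,Q_D^+(x)},
\]
and apply Lemma~\ref{lem:14BIS.1} separately to $u^D$ and to $u^+$ (each \emph{is} a homogeneous solution of a single-interface transmission problem) on a cylinder of radius $\Delta(x)\sim|x-y|$ not containing $y$. This yields $|\nabla v|_{\beta}\le C|x-y|^{-2-\beta}$ independently of your bound on $\|v\|_{L^\infty}$. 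Interpolating this against $\|v\|_{L^\infty}\le C\rho_0^{-1}(|x-y|/\rho_0)^{-1+\alpha}$ via
\[
\|\nabla v\|_{L^\infty}\le C\Bigl(\|v\|_{L^\infty}^{\beta/(1+\beta)}|\nabla v|_\beta^{1/(1+\beta)}+\Delta(x)^{-1}\|v\|_{L^\infty}\Bigr)
\]
then gives the exponent $-2+\alpha\beta/(1+\beta)=-2+\alpha^2/(3\alpha+2)$.
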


\begin{proof}

Let us set
\begin{equation}
  \label{eq:AGG21.1}
   R(x,y)=u^D(x,y)-u^+(x,y).
\end{equation}
The estimate of $R$ is based on a local flattening of the boundary
$\partial D$, which is realized through the following
transformation $\Phi$ (see, for instance, \cite{ADiC05}).

Let us consider a cut-off function $\vartheta \in C^\infty(\R)$
such that $0 \leq \vartheta (t) \leq 1$ in $\R$, $\vartheta=1$ if
$|t| \leq 1$, $\vartheta (t)=0$ if $|t| \geq 2$, $|\vartheta' (t)|
\leq 2$ and $|\vartheta '' (t)| \leq 4$  for every $t\in\R$. Let
\begin{equation}
  \label{eq:39.0}
   \rho=
\frac{\rho_0}{ \sqrt{1+M_0^2}  }=\rho_0 \cos \vartheta_0,
\end{equation}
where
$\tan \vartheta_0 =M_0$. Let us introduce the
following transformation
\begin{equation}
  \label{eq:39.1}
   \Phi: \R^3 \rightarrow \R^3,
\end{equation}
\begin{center}
\( {\displaystyle \left\{
\begin{array}{lr}
  \xi_1=x_1,
    \vspace{0.25em}\\
  \xi_2=x_2,
    \vspace{0.25em}\\
  \xi_3=x_3 -\varphi(x_1,x_2)\vartheta \left ( \frac{x_3}{5\rho
  M_0} \right ) \vartheta \left ( \frac{|x'|}{\rho} \right ),
\end{array}
\right. } \) \vskip -2.4em
\begin{eqnarray}
& & \label{eq:39.2}
\end{eqnarray}
\end{center}
where $\varphi$ is the $C^{1,\alpha}$ function that represents
locally $\partial D$.

It is easy to prove that $\Phi$ is a $C^{1,\alpha}$-diffeomorphism
satisfying the following properties:

\begin{subequations}
\begin{eqnarray}
\label{eq:39.3} \Phi=Id \quad \hbox{in } \R^3 \setminus Q_{2\rho, 10\rho
   M_0}, \\[2mm]
\label{eq:39.4}  \Phi ( Q_{2\rho, 10\rho M_0}) = Q_{2\rho, 10\rho M_0};
\end{eqnarray}
\end{subequations}
\begin{subequations}
\begin{eqnarray}
\label{eq:39BIS.-1}  \Phi(x) =(x', x_3-\varphi(x')), \quad \hbox{in }
   Q_{\rho,\rho M_0},\\[2mm]
\label{eq:39BIS.0} \Phi( Q_{\rho,\rho M_0} \cap \partial D) =
    Q_{\rho,\rho M_0} \cap \{x_3=0\},\\[2mm]
    \label{eq:39BIS.1}
   \Phi(Q_{\rho,\rho M_0} \cap D ) \supset
   \{(\xi',\xi_3)| \ |\xi'|< \rho, \ 0<\xi_3<M_0(\rho-|\xi'|) \},\\[2mm]
   \label{eq:39BIS.2}
   \Phi(Q_{\rho,\rho M_0} \setminus \overline{D} ) \supset
   \{(\xi',\xi_3)| \ |\xi'|< \rho, \ -M_0(\rho -|\xi'|)<\xi_3<0 \};
\end{eqnarray}
\end{subequations}
\medskip
\begin{equation}
  \label{eq:39.6}
   c^{-1}|x-\widetilde{x}|\leq
   |\Phi(x)-\Phi(\widetilde{x})| \leq c|x-\widetilde{x}|, \quad
   \hbox{for every } x, \ \widetilde{x} \in \R^3;
\end{equation}
\medskip
\begin{equation}
  \label{eq:39.7}
   |\Phi(x) - x | \leq \frac{c}{\rho_0^\alpha}
   |x'|^{1+\alpha}, \quad  \hbox{for every } x\in \R^3;
\end{equation}

\begin{equation}
  \label{eq:40.0}
   |J(x)- Id| \leq \frac{c}{\rho_0^\alpha}
   |x'|^{\alpha} , \quad  \hbox{for every } x\in \R^3,
\end{equation}
where $J(x)=\nabla\Phi(x)$ and $c>0$ is a constant only depending on
$M_0$.

Denoting
\begin{equation}
  \label{eq:40.1}
   \xi=\Phi(x), \quad \eta=\Phi(y)=y,
\end{equation}
and defining
\begin{equation}
  \label{eq:40.2}
   \widetilde{\Gamma}^D(\xi,\eta)=\Gamma^D(x,y), \quad
   \widetilde{\chi}_D(\xi)=\chi_D(x), \quad
   \widetilde{J}(\xi)=J(x),
\end{equation}
we have that
\begin{multline}
  \label{eq:40.3}
   \divrg_\xi
   \left \{
   \left [
    (\C +(\C^D-\C)\widetilde{\chi}_D(\xi))
    (\nabla_\xi (\widetilde{\Gamma}^D(\xi,\eta)l)
    \widetilde{J}(\xi) )
    \right ]
    \frac{ \widetilde{J}^T(\xi)    }{ \det  \widetilde{J}(\xi) }
    \right \}
    =
    -l \delta(\xi-\eta), \\
    \hbox{for every } \xi\in \R^3 \ \hbox{and for every } l\in\R^3, \ |l|=1.
\end{multline}
By \eqref{eq:39BIS.1}, \eqref{eq:39BIS.2},
$\widetilde{\chi}_D(\xi)=\chi_+(\xi)$ for every $\xi \in
\{(\xi',\xi_3)| \ |\xi'|<\rho, \ |\xi_3|<M_0(\rho-|\xi'|)\}$ and,
therefore,
\begin{multline}
  \label{eq:41.1}
   \divrg_\xi
   \left \{
   \left [
    (\C +(\C^D-\C){\chi}_+(\xi))
    (\nabla_\xi (\widetilde{\Gamma}^D(\xi,\eta)l)
    \widetilde{J}(\xi) )
    \right ]
    \frac{ \widetilde{J}^T(\xi)    }{ \det  \widetilde{J}(\xi) }
    \right \}
    =
    -l \delta(\xi-\eta), \\
    \hbox{in } Q_{ \frac{\rho}{2}, \frac{\rho
    M_0}{2}} \ \hbox{and for every } l\in\R^3, |l|=1.
\end{multline}
Let us define the function
\begin{equation}
  \label{eq:41.2}
   \widetilde{R}(\xi, \eta)=
   \widetilde{u}^D(\xi,\eta)-u^+(\xi,\eta), \quad \hbox{in } Q_{ \frac{\rho}{2}, \frac{\rho
    M_0}{2}},
\end{equation}
where
\begin{equation}
  \label{eq:41.3}
   \widetilde{u}^D(\xi,\eta)= \widetilde{\Gamma}^D(\xi, \eta)l,
\end{equation}
\begin{equation}
  \label{eq:41.4}
   u^+(\xi,\eta)= \Gamma^+(\xi, \eta)l.
\end{equation}
The function $\widetilde{R}(\xi, \eta)$ satisfies the equation
\begin{equation}
  \label{eq:41.5}
   \divrg_\xi
   \left [
    (\C +(\C^D-\C){\chi}_+(\xi))
    \nabla_\xi \widetilde{R}(\xi,\eta)
    \right ]
    =
    \mathcal{F}(\xi,\eta), \quad \hbox{in } Q_{ \frac{\rho}{2}, \frac{\rho
    M_0}{2}},
\end{equation}
where
\begin{multline}
  \label{eq:41.6}
   \mathcal{F}(\xi,\eta)
   =
   \divrg_\xi
   \left \{
   \left [
    (\C +(\C^D-\C){\chi}_+(\xi))
    \nabla_\xi \widetilde{u}^D(\xi,\eta)
    \right ]
    \left (
    Id - \frac{ \widetilde{J}^T(\xi)    }{ \det  \widetilde{J}(\xi) }
    \right )
    \right \}
    - \\
    -  \divrg_\xi
   \left \{
   \left [
    (\C +(\C^D-\C){\chi}_+(\xi))
    (\nabla_\xi \widetilde{u}^D(\xi,\eta) (
    \widetilde{J}(\xi)-Id))
    \right ]
    \frac{ \widetilde{J}^T(\xi)    }{ \det  \widetilde{J}(\xi) }
    \right \}.
    \end{multline}
We want to estimate $\widetilde{R}(z, \eta)$ for $z \in Q_{
\frac{\rho}{4}, \frac{\rho M_0}{4}}^+$ and $\eta=(0,0,-h) \in Q_{
\frac{\rho}{4}, \frac{\rho M_0}{4}}^-$. Using Green's formulas one finds
\begin{equation}
  \label{eq:43.1}
    \widetilde{R}(z,\eta) \cdot m = I_1 +I_2 +I_3 +I_4 +I_5,
\end{equation}
where
\begin{equation}
  \label{eq:43.2}
    I_1 =
    - \int_{\partial Q_{ \frac{\rho}{2}, \frac{\rho M_0}{2}}}
    \Gamma^+(\xi,z)m
    \cdot
    \left \{
    \left [
    \widetilde{\C}(\xi) \nabla_\xi \widetilde{u}^D(\xi, \eta)
    \right ]
    \left (
    Id - \frac{\widetilde{J}^T(\xi)   }{\det \widetilde{J}(\xi) }
    \right )
    \right \}\nu_\xi \ d\sigma_\xi,
\end{equation}
\begin{equation}
  \label{eq:43.3}
    I_2 =
    \int_{\partial Q_{ \frac{\rho}{2}, \frac{\rho M_0}{2}}}
    \Gamma^+(\xi,z)m
    \cdot
    \left \{
    \left [
    \widetilde{\C}(\xi)
    \left (
    \nabla_\xi \widetilde{u}^D(\xi, \eta)
    (\widetilde{J}(\xi)  - Id)
    \right )
    \right ]
    \frac{\widetilde{J}^T(\xi)   }{\det \widetilde{J}(\xi) }
    \right \}\nu_\xi \ d\sigma_\xi,
\end{equation}
\begin{equation}
  \label{eq:43.4}
    I_3 =
    -\int_{\partial Q_{ \frac{\rho}{2}, \frac{\rho M_0}{2}}}
    \left [
    \widetilde{R}(\xi,\eta) \cdot
    ( \widetilde{\C}(\xi) \nabla_\xi (\Gamma^+(\xi, z)m))\nu_\xi
    -
    \Gamma^+(\xi,z)m \cdot
    ( \widetilde{\C}(\xi) \nabla_\xi \widetilde{R}(\xi,\eta))\nu_\xi
    \right ] d\sigma_\xi,
\end{equation}
\begin{equation}
  \label{eq:43.5}
    I_4 =
    \int_{Q_{ \frac{\rho}{2}, \frac{\rho M_0}{2}}}
    \nabla_\xi (\Gamma^+(\xi,z)m)
    \cdot
    \left \{
    \left [
    \widetilde{\C}(\xi) \nabla_\xi \widetilde{u}^D(\xi, \eta)
    \right ]
    \left (
    Id - \frac{\widetilde{J}^T(\xi)   }{\det \widetilde{J}(\xi) }
    \right )
    \right \}d\xi,
\end{equation}
\begin{equation}
  \label{eq:43.6}
    I_5 =
    - \int_{Q_{ \frac{\rho}{2}, \frac{\rho M_0}{2}}}
    \nabla_\xi (\Gamma^+(\xi,z)m)
    \cdot
    \left \{
    \left [
    \widetilde{\C}(\xi)
    \left (
    \nabla_\xi \widetilde{u}^D(\xi, \eta)
    (\widetilde{J}(\xi)  - Id)
    \right )
    \right ]
    \frac{\widetilde{J}^T(\xi)   }{\det \widetilde{J}(\xi) }
    \right \}d\xi,
\end{equation}
where $\widetilde{\C}=\C + (\C^D -\C)\chi_+$.

Let us estimate $I_1$. By Proposition \ref{prop:14.1} and by the
properties of the transformation $\Phi$ defined by
\eqref{eq:39.2}, we have
\begin{equation}
  \label{eq:44.1}
    |I_1| \leq
    C\rho_0^{-\alpha}
    \int_{\partial Q_{ \frac{\rho}{2}, \frac{\rho M_0}{2}}}
    |\xi-z|^{-1} |\xi-\eta|^{-2} |\xi|^{\alpha}
    d\sigma_\xi,
\end{equation}
where $C>0$ only depends on $M_0$, $\alpha$, $\alpha_0$,
$\gamma_0$, $\overline{\lambda}$, $\overline{\mu}$. Since $\xi \in
\partial Q_{ \frac{\rho}{2}, \frac{\rho M_0}{2}}$ and $z$, $\eta
\in Q_{ \frac{\rho}{4}, \frac{\rho M_0}{4}}$, then
\begin{equation}
  \label{eq:44.2}
    |\xi-\eta|\geq C\rho, \quad |\xi-z| \geq C\rho,
\end{equation}
with $C= \frac{1}{4} \min\{1,M_0\}$. Therefore,
\begin{equation}
  \label{eq:44.3}
    |I_1| \leq
    \frac{C}{\rho_0}\left(\frac{\rho}{\rho_0}\right)^{\alpha-1},
\end{equation}
where $C>0$ only depends on $M_0$, $\alpha$, $\alpha_0$,
$\gamma_0$, $\overline{\lambda}$, $\overline{\mu}$.

Similarly, one finds
\begin{equation}
  \label{eq:44.4}
    |I_2| \leq
    \frac{C}{\rho_0}\left(\frac{\rho}{\rho_0}\right)^{\alpha-1},
\end{equation}
\begin{equation}
  \label{eq:44.5}
    |I_3| \leq
    \frac{C}{\rho_0}\left(\frac{\rho}{\rho_0}\right)^{\alpha-1},
\end{equation}
where $C>0$ only depends on $M_0$, $\alpha$, $\alpha_0$,
$\gamma_0$, $\overline{\lambda}$, $\overline{\mu}$.

In order to estimate $I_4$, let us write
\begin{equation}
  \label{eq:44.6}
    I_4 = I'_4 + I''_4,
\end{equation}
where
\begin{equation}
  \label{eq:44.6bis}
    I_4' =
    \int_{Q_{ \frac{\rho}{2}, \frac{\rho M_0}{2}} \setminus Q_{ \frac{3\rho}{8}, \frac{3\rho M_0}{8}}     }
    \nabla_\xi (\Gamma^+(\xi,z)m)
    \cdot
    \left \{
    \left [
    \widetilde{\C}(\xi) \nabla_\xi \widetilde{u}^D(\xi, \eta)
    \right ]
    \left (
    Id - \frac{\widetilde{J}^T(\xi)   }{\det \widetilde{J}(\xi) }
    \right )
    \right \}d\xi,
\end{equation}
\begin{equation}
  \label{eq:44.6ter}
    I_4'' =
    \int_{ Q_{ \frac{3\rho}{8}, \frac{3\rho M_0}{8}}     }
    \nabla_\xi (\Gamma^+(\xi,z)m)
    \cdot
    \left \{
    \left [
    \widetilde{\C}(\xi) \nabla_\xi \widetilde{u}^D(\xi, \eta)
    \right ]
    \left (
    Id - \frac{\widetilde{J}^T(\xi)   }{\det \widetilde{J}(\xi) }
    \right )
    \right \}d\xi.
\end{equation}
Arguing similarly as in estimating $I_1$, we have
\begin{equation}
  \label{eq:45.1}
    |I_4'| \leq
    \frac{C}{\rho_0}\left(\frac{\rho}{\rho_0}\right)^{\alpha-1},
\end{equation}
where $C>0$ only depends on $M_0$, $\alpha$, $\alpha_0$,
$\gamma_0$, $\overline{\lambda}$, $\overline{\mu}$.
Note that in the above formulas the factor $\frac{\rho}{\rho_0}$
is a constant which only depends on $M_0$. We found it convenient for the following calculations to keep such a constant factor in evidence (see \eqref{eq:47.2} below).

For
simplicity, let us denote $Q=Q_{ \frac{3\rho}{8}, \frac{3\rho
M_0}{8}}$. We have
\begin{equation}
  \label{eq:45.2}
    I_4''=
    I_{4a}'' + I_{4b}'',
\end{equation}
where
\begin{equation}
  \label{eq:45.2bis}
    I_{4a}''=
    \int_{ Q \cap \{ |\xi|<4|\eta-z|\}}
    \nabla_\xi (\Gamma^+(\xi,z)m)
    \cdot
    \left \{
    \left [
    \widetilde{\C}(\xi) \nabla_\xi \widetilde{u}^D(\xi, \eta)
    \right ]
    \left (
    Id - \frac{\widetilde{J}^T(\xi)   }{\det \widetilde{J}(\xi) }
    \right )
    \right \}d\xi,
\end{equation}
\begin{equation}
  \label{eq:45.2ter}
    I_{4b}''=
    \int_{ Q \cap \{ |\xi|\geq4|\eta-z|\}}
    \nabla_\xi (\Gamma^+(\xi,z)m)
    \cdot
    \left \{
    \left [
    \widetilde{\C}(\xi) \nabla_\xi \widetilde{u}^D(\xi, \eta)
    \right ]
    \left (
    Id - \frac{\widetilde{J}^T(\xi)   }{\det \widetilde{J}(\xi) }
    \right )
    \right \}d\xi.
\end{equation}
By Proposition \ref{prop:14.1} and introducing the change of
variables
\begin{equation}
  \label{eq:45.3}
    \xi=|z-\eta|w,
\end{equation}
we have
\begin{equation}
  \label{eq:45.4}
    |I_{4a}''|
    \leq
    C \rho_0^{-\alpha} |z-\eta|^{\alpha-1}
    \int_{|w|<4}
    \left |
    w - \frac{z}{|z-\eta|}
    \right |^{-2}
    \left |
    w - \frac{\eta}{|z-\eta|}
    \right |^{-2} dw.
\end{equation}
Since the integral on the right hand side is bounded by an
absolute constant, see, for instance, \cite[Chapter 2, Section
11]{Mir70} , we have
\begin{equation}
  \label{eq:45.5}
    |I_{4a}''| \leq
    \frac{C}{\rho_0}\left(\frac{|z-\eta|}{\rho_0}\right)^{\alpha-1},
\end{equation}
where $C>0$ only depends on $M_0$, $\alpha$, $\alpha_0$,
$\gamma_0$, $\overline{\lambda}$, $\overline{\mu}$.

By Proposition \ref{prop:14.1}, and noticing that $Q \subset \{
|\xi| < \frac{3}{8}\rho \sqrt{1+M_0^2} < 2\rho \sqrt{1+M_0^2}\}$
and that $4|z-\eta| < 2\rho \sqrt{1+M_0^2}$, we have
\begin{equation}
  \label{eq:46.1}
    |I_{4b}''| \leq
    C\rho_0^{-\alpha}
    \int_{ 4|\eta-z| \leq |\xi| \leq 2\rho \sqrt{1+M_0^2}   }
    |\xi-z|^{-2} |\xi-\eta|^{-2} |\xi|^{\alpha}
    d\xi,
\end{equation}
where $C>0$ only depends on $M_0$, $\alpha$, $\alpha_0$,
$\gamma_0$, $\overline{\lambda}$, $\overline{\mu}$. Since,
trivially, $|z-\eta|\geq |z|$, $|z-\eta|\geq |\eta|$, we have
\begin{equation}
  \label{eq:46.2}
    |\xi| \leq |\xi-\eta|+|\eta|\leq |\xi - \eta|+ |z-\eta| \leq |\xi -
    \eta| + \frac{|\xi|}{4},
\end{equation}
so that
\begin{equation}
  \label{eq:46.3}
    |\xi| \leq \frac{4}{3}  |\xi-\eta|.
\end{equation}
Similarly,
\begin{equation}
  \label{eq:46.4}
    |\xi| \leq \frac{4}{3}  |\xi-z|.
\end{equation}
By inserting \eqref{eq:46.3}, \eqref{eq:46.4} in \eqref{eq:46.1},
we have
\begin{equation}
  \label{eq:46.5}
    |I_{4b}''| \leq
    C\rho_0^{-\alpha}
    \int_{ 4|\eta-z| \leq |\xi| \leq 2\rho \sqrt{1+M_0^2}   }
    |\xi|^{\alpha -4}
    d\xi
    \leq
    \frac{C}{\rho_0}
    \left (
    \frac{|z-\eta|}{\rho_0}
    \right )^{\alpha -1},
\end{equation}
where $C>0$ only depends on $M_0$, $\alpha$, $\alpha_0$,
$\gamma_0$, $\overline{\lambda}$, $\overline{\mu}$.

By \eqref{eq:45.1}, \eqref{eq:45.5}, \eqref{eq:46.5}
and taking into account that
\begin{equation}
  \label{eq:47.2}
    |z-\eta|< \frac{\rho \sqrt{1+M_0^2}}{2}=\frac{\rho_0}{2},
\end{equation}
we have
\begin{equation}
  \label{eq:47.3}
    |I_4| \leq \frac{C}{\rho_0}
    \left(\frac{|z-\eta|}{\rho_0}\right)^{\alpha -1},
\end{equation}
where $C>0$ only depends on $M_0$, $\alpha$, $\alpha_0$,
$\gamma_0$, $\overline{\lambda}$, $\overline{\mu}$.

An estimate analogous to \eqref{eq:47.3} holds for $I_5$.

By the above estimates and choosing in \eqref{eq:43.1} $m = \frac{
\widetilde{R} (z,\eta)}{|\widetilde{R} (z,\eta)|}$, for every $z
\in  Q_{\frac{\rho}{4}, \frac{\rho M_0}{4}}^+$ and for every
$\eta=(0,0,-h) \in Q_{\frac{\rho}{4}, \frac{\rho M_0}{4}}^-$ we
have
\begin{equation}
  \label{eq:47.5}
    |\widetilde{R} (z,\eta)| \leq   \frac{C}{\rho_0}
    \left (
    \frac{|z-\eta|}{\rho_0}
    \right )^{\alpha -1},
\end{equation}
where $C>0$ only depends on $M_0$, $\alpha$, $\alpha_0$,
$\gamma_0$, $\overline{\lambda}$, $\overline{\mu}$. Notice that,
by \eqref{eq:47.2}, $2\frac{|z-\eta|}{\rho_0}<1$.

Let us get back to the original variables in order to obtain the
estimate for $R(x,y)$. Since
\begin{equation}
  \label{eq:48.0}
    \Phi^{-1}(\xi) = (\xi', \xi_3
+\varphi(\xi')), \quad \hbox{ in } \Phi(Q_{\rho,\rho M_0}),
\end{equation}
it easy to see that
\begin{equation}
  \label{eq:48.1}
    \Phi^{-1}(Q_{\frac{\rho}{4}, \frac{\rho M_0}{4}} \cap
    \{x_3=0\}) = Q_{\frac{\rho}{4},\frac{\rho M_0}{4} } \cap \partial D,
\end{equation}
\begin{equation}
  \label{eq:48.2}
    \Phi^{-1}(Q_{\frac{\rho}{4}, \frac{\rho M_0}{4}}^+) \subset D,
\end{equation}
\begin{equation}
  \label{eq:48.3}
    \Phi^{-1}(Q_{\frac{\rho}{4}, \frac{\rho M_0}{4}}^-) \cap D =\emptyset,
\end{equation}
\begin{equation}
  \label{eq:48.4}
    Q_{\frac{\rho}{8}, \frac{\rho M_0}{8}} \subset \Phi^{-1}
    (Q_{\frac{\rho}{4}, \frac{\rho M_0}{4}}),
\end{equation}
\begin{equation}
  \label{eq:48.5}
    Q_{\frac{\rho}{8}, \frac{\rho M_0}{8}} \cap D \subset \Phi^{-1}
    (Q_{\frac{\rho}{4}, \frac{\rho M_0}{4}}^+),
\end{equation}
\begin{equation}
  \label{eq:48.6}
    Q_{\frac{\rho}{8}, \frac{\rho M_0}{8}} \setminus \overline{D} \subset \Phi^{-1}
    (Q_{\frac{\rho}{4}, \frac{\rho M_0}{4}}^-).
\end{equation}
Therefore, for every $x \in Q_{\frac{\rho}{8}, \frac{\rho M_0}{8}}
\cap D$ and for every $y \in Q_{\frac{\rho}{8}, \frac{\rho
M_0}{8}} \setminus \overline{D}$, $y=(0,0,-h)$, with $h \in \left
(0, \frac{\rho M_0}{8} \right )$, we have
\begin{equation}
  \label{eq:48.7}
    |\widetilde{R} (\Phi(x),\Phi(y))| \leq   \frac{C}{\rho_0}
    \left (
    \frac{|\Phi(x)-\Phi(y)|}{\rho_0}
    \right )^{\alpha -1},
\end{equation}
where $C>0$ only depends on $M_0$, $\alpha$, $\alpha_0$,
$\gamma_0$, $\overline{\lambda}$, $\overline{\mu}$.

Recalling that $\Phi(y)=y$, we have
\begin{multline}
  \label{eq:49.1}
    R(x,y)=
    u^D(x,y)-u^+(\Phi(x),\Phi(y))+u^+(\Phi(x),\Phi(y))-u^+(x,y) =
    \\
    =
    \widetilde{R}(\Phi(x),\Phi(y)) + \left (
    u^+(\Phi(x),\Phi(y))-u^+(x,y) \right ).
\end{multline}
In order to estimate the second addend in \eqref{eq:49.1}, let us
distinguish two cases.

\textit{Case i)}: $x_3 \geq 0$, that is $x \in
\overline{\R^{3+}}$.

By the results in \cite{LN03}, $u^+ (\cdot,
y) \in C^1( \overline{\R^{3+}} )$ and, since the segment $S=[x,
\Phi(x)]$ is contained in $\overline{\R^{3+}}$, by
\eqref{eq:14.5}, \eqref{eq:39.7} and noticing that $|x'| \leq
|x-y|$, we have
\begin{multline}
  \label{eq:49.2}
    |u^+(\Phi(x),\Phi(y))-u^+(x,y)| \leq \| \nabla u^+(\cdot,
    y)\|_{L^\infty (S)}|\Phi(x)-x| =
    \\
    = |\nabla u^+ (w,y)|\cdot |\Phi(x)-x| \leq
    C\rho_0^{-\alpha}|w-y|^{-2}|x-y|^{1+\alpha},
\end{multline}
where $w \in [x,\Phi(x)]$ and $C>0$ only depends on $M_0$,
$\alpha$, $\alpha_0$, $\gamma_0$, $\overline{\lambda}$,
$\overline{\mu}$.

Since either $|w-y| \geq |x-y|$ or $|w-y| \geq |\Phi(x)-y|$
and, by \eqref{eq:39.6}, $|\Phi(x)-y|=|\Phi(x)-\Phi(y)|\geq
C|x-y|$, we have that
\begin{equation}
  \label{eq:50.1}
    |u^+(\Phi(x),\Phi(y))-u^+(x,y)| \leq \frac{C}{\rho_0}
    \left (
    \frac{|x-y|}{\rho_0}
    \right )^{-1+\alpha},
\end{equation}
where $C>0$ only depends on $M_0$, $\alpha$, $\alpha_0$,
$\gamma_0$, $\overline{\lambda}$, $\overline{\mu}$.

\medskip

\textit{Case ii)}: $x_3 < 0$, that is $x \in \overline{\R^{3-}}$.

Let $\widetilde{x}=(x',\varphi(x'))\in \partial D$ and
$\overline{x}=\Phi(\widetilde{x}) = (x',0)$. By using the
$C^1$-regularity of $u^+$ separately in $\overline{\R^{3+}}$ and
in $\overline{\R^{3-}}$, we have
\begin{multline}
  \label{eq:50.2}
    |u^+(\Phi(x),y)-u^+(x,y)| \leq |
    u^+(\Phi(x),y)-u^+(\overline{x},y)|+
    |u^+(\overline{x},y)-u^+(x,y)| \leq
    \\
    \leq |\nabla u^+ (w^+,y)| \cdot |\Phi(x)-\overline{x}| +
   |\nabla u^+ (w^-,y)| \cdot |\overline{x}-x|,
\end{multline}
where $w^+ \in S^+=[\Phi(x), \overline{x}]$,  $w^- \in
S^-=[\overline{x}, x]$. Let $\widetilde{w}^+ = \Phi^{-1}(w^+)$.
Since $w^+ \in [\overline{x}, \Phi (x)]$, by \eqref{eq:48.0} we
have that $\widetilde{w}^+ \in [ \Phi^{-1}( \overline{x} ),
\Phi^{-1} (\Phi(x))]= [\widetilde{x},x]$. Therefore, $x \in
[\widetilde{w}^+, w^+]$, so that either $|w^+ -y| \geq |x-y|$ or
$|\widetilde{w}^+ -y| \geq |x-y|$. Noticing that $|w^+ -y| = |
\Phi( \widetilde{w}^+) -\Phi(y)| \geq C |\widetilde{w}^+ -y|$, in
both cases, by \eqref{eq:14.5}, we have
\begin{equation}
  \label{eq:50.3}
    |\nabla u^+(w^+,y)| \leq C |x-y|^{-2},
\end{equation}
where $C>0$ only depends on $M_0$, $\alpha$, $\alpha_0$,
$\gamma_0$, $\overline{\lambda}$, $\overline{\mu}$.

Now, let $\widetilde{w}^- = \Phi^{-1}(w^-)$. Recalling
\eqref{eq:48.0} and considering the third components, {}from
$w^-_3 < [\Phi(x)]_3$ it follows that $w^-_3 = [\Phi^{-1}(w^-)]_3
< [\Phi^{-1}(\Phi(x))]_3 =x_3$. On the other hand, $x_3 < w^-_3$,
so that $x \in [w^-, \widetilde{w}^-]$. Therefore, either $|w^- -
y| \geq |x-y|$ or $|\widetilde{w}^- - y| \geq |x-y|$. Noticing
that $|w^- -y| = | \Phi(\widetilde{w}^-)-\Phi(y)| \geq
C|\widetilde{w}^- -y|$, in both cases, by \eqref{eq:14.5}, we have
\begin{equation}
  \label{eq:51.1}
    |\nabla u^+(w^-,y)| \leq C |x-y|^{-2},
\end{equation}
where $C>0$ only depends on $M_0$, $\alpha$, $\alpha_0$,
$\gamma_0$, $\overline{\lambda}$, $\overline{\mu}$.

By \eqref{eq:50.2}--\eqref{eq:51.1} and
\eqref{eq:39.7}, we have
\begin{multline}
  \label{eq:51.2}
    |u^+(\Phi(x),y)-u^+(x,y)| \leq
    C |x-y|^{-2} ( |\Phi(x) - \overline{x}| + |\overline{x} -x| )
    =
    \\
    = C |x-y|^{-2} |\Phi(x) -x| \leq \frac{C}{\rho_0} \left (
    \frac{|x-y|}{\rho_0}\right )^{-1+\alpha},
\end{multline}
where $C>0$ only depends on $M_0$, $\alpha$, $\alpha_0$,
$\gamma_0$, $\overline{\lambda}$, $\overline{\mu}$.

By \eqref{eq:48.7}--\eqref{eq:49.1}, \eqref{eq:51.2} and
\eqref{eq:39.7}, estimate \eqref{eq:38.1} follows.

\medskip

In order to obtain \eqref{eq:38.2}, let $x,y\in Q_{\frac{\rho}{12}, \frac{\rho M_0}{12}}$, $x\in D\cap\{x_3\geq 0\}$, $y=(0,0,-h)$, $h>0$, and let
\begin{equation}
  \label{eq:53.1}
    \Delta(x)=\frac{|x-y|}{8\sqrt{1+M_0^2}}.
\end{equation}
Since $|x-y|\leq \frac{\rho}{6}\sqrt{1+M_0^2}$, for every $x,y\in Q_{\frac{\rho}{12}, \frac{\rho M_0}{12}}$,
it follows that

\begin{equation}
  \label{eq:53.1,5}
    \Delta(x)\leq \frac{\rho}{48},
\end{equation}
so that
\begin{equation}
  \label{eq:53.2}
    Q_{2\Delta(x), 2\Delta(x)M_0}(x)\subset Q_{\frac{\rho}{8}, \frac{\rho M_0}{8}}, \forall x\in
    Q_{\frac{\rho}{12}, \frac{\rho M_0}{12}}.
\end{equation}

Let us define $Q_D^+(x)=Q_{\Delta(x),\Delta(x)M_0}^+(x)\cap D$.
Let $\beta= \frac{\alpha}{2(1+\alpha)}$.

By a standard interpolation inequality, we have
\begin{multline}
  \label{eq:53.3}
    \|\nabla R(\cdot,y)\|_{L^\infty(Q_D^+(x))}\leq\\
    \leq C
     \left(\| R(\cdot,y)\|_{L^\infty(Q_D^+(x))}^{\frac{\beta}{1+\beta}}
      | \nabla R(\cdot,y)|_{\beta,Q_D^+(x)}^{\frac{1}{1+\beta}}
      +\frac{1}{\Delta(x)}\| R(\cdot,y)\|_{L^\infty(Q_D^+(x))}\right),
\end{multline}
where $C>0$ only depends
on $M_0$ and $\alpha$.

By Lemma \ref{lem:14BIS.1}, and noticing that $\overline{Q_D^+(x)}\subset\overline{\R^{3+}}\cap \overline{D}$,
\begin{multline}
  \label{eq:54.1}
      | \nabla R(\cdot,y)|_{\beta,Q_D^+(x)}\leq
      |  \nabla u^D(\cdot,y)|_{\beta,Q_D^+(x)}+
      |  \nabla u^+(\cdot,y)|_{\beta,Q_D^+(x)}
      \leq\\
      \leq
      \frac{C}{\Delta(x)^{1+\beta}}\left(\|  u^D(\cdot,y)\|_{L^{\infty}(Q_{2\Delta(x), 2\Delta(x)M_0}(x))}+
      \|  u^+(\cdot,y)\|_{L^{\infty}(Q_{2\Delta(x), 2\Delta(x)M_0}(x))}
      \right)
      ,
\end{multline}
where $C>0$ only depends on $M_0$, $\alpha$, $\alpha_0$,
$\gamma_0$, $\overline{\lambda}$, $\overline{\mu}$. Noticing that
for any $w\in Q_{2\Delta(x), 2\Delta(x)M_0}(x)$,
\begin{equation}
  \label{eq:54.2}
    |w-y|\geq|x-y|-|w-x|\geq |x-y|-2\Delta(x)\sqrt{1+M_0^2}=\frac{3}{4}|x-y|,
\end{equation}
and by applying \eqref{eq:14.4} to both $u^D$ and $u^+$, we have
\begin{equation}
  \label{eq:54.3}
     \|  u^D(\cdot,y)\|_{L^{\infty}(Q_{2\Delta(x), 2\Delta(x)M_0}(x))}\leq C|x-y|^{-1},
\end{equation}
\begin{equation}
  \label{eq:54.4}
     \|  u^+(\cdot,y)\|_{L^{\infty}(Q_{2\Delta(x), 2\Delta(x)M_0}(x))}\leq C|x-y|^{-1},
\end{equation}
where $C>0$ only depends on $M_0$, $\alpha$, $\alpha_0$,
$\gamma_0$, $\overline{\lambda}$, $\overline{\mu}$.

{}From \eqref{eq:54.1},\eqref{eq:54.3}, \eqref{eq:54.4}, we have
\begin{equation}
  \label{eq:54.5}
      | \nabla R(\cdot,y)|_{\beta,Q_D^+(x)}\leq
      C|x-y|^{-2-\beta},
\end{equation}
with $C>0$ only depending on $M_0$, $\alpha$, $\alpha_0$,
$\gamma_0$, $\overline{\lambda}$, $\overline{\mu}$.

By \eqref{eq:53.3}, \eqref{eq:54.5}, \eqref{eq:38.1} and \eqref{eq:54.2}, we obtain
\begin{multline}
  \label{eq:54.6}
   |\nabla_x u^D(x,y)-\nabla_x u^+(x,y)| \leq
   \|\nabla R(\cdot,y)\|_{L^\infty(Q_D^+(x))}\leq\\
   \leq\frac{C}{\rho_0^2}
   \left [
   \left (
   \frac{|x-y|}{\rho_0}
   \right )^{-2 +  \frac{\alpha\beta}{\beta+1}  }+
   \left (
   \frac{|x-y|}{\rho_0}
   \right )^{-2 +  \alpha}
   \right],
\end{multline}
where $C>0$ only depends on $M_0$, $\alpha$, $\alpha_0$,
$\gamma_0$, $\overline{\lambda}$, $\overline{\mu}$. Taking into account that
$0<\frac{\beta}{\beta+1}<1$, we obtain
\eqref{eq:38.2}.

\end{proof}

\section{Proof of Theorem \ref{theo:55.1}}
\label{proof_lowerbound}

Let $\rho = \min \left
\{ dist(O,D_2), \ \frac{\rho_0}{12 \sqrt{1+M_0^2}} \cdot \min
\{1,M_0\}  \right  \}$, and  $h\leq \overline{h}\rho$, with $\overline{h}\in(0,\frac{1}{2})$ to be chosen later, where $O \equiv P$ denotes the origin of
the cartesian coordinate system, with $e_3 = -\nu$, $\nu$ being
the outer unit normal to $D_1$ at $O$. This choice ensures that
estimates \eqref{eq:38.1}, \eqref{eq:38.2} hold for $D=D_1$ in
$B_\rho(O) \cap D_1$ and $B_\rho^+(O) \cap D_1$, respectively. For
simplicity, in the following we shall denote $B_\rho=B_\rho(O)$,
$B_\rho^+=B_\rho^+(O)$.

By \eqref{eq:20.3} we have
\begin{equation}
  \label{eq:56.1}
     |f(y_h,w_h;l,m)|\geq |S_{D_1}(y_h,w_h;l,m)|-|S_{D_2}(y_h,w_h;l,m)|.
\end{equation}
In order to estimate $|S_{D_1}|$ {}from below, we write
\begin{multline}
  \label{eq:57.1}
     S_{D_1}(y_h,w_h;l,m)=
     \int_{D_1 \cap B_\rho}
     (\C^I -\C) \nabla_x ( \Gamma^+(x,y_h)l)
     \cdot
     \nabla_x(\Gamma(x,w_h)m) +
     \\
     +
     \int_{D_1 \cap B_\rho}
     (\C^I -\C) \nabla_x ((\Gamma^{D_1}(x,y_h)-\Gamma^+(x,y_h))l)
     \cdot
     \nabla_x ((\Gamma^{D_2}(x,w_h)-\Gamma(x,w_h))m)+
     \\
     +
     \int_{D_1 \cap B_\rho}
     (\C^I -\C) \nabla_x ((\Gamma^{D_1}(x,y_h)-\Gamma^+(x,y_h))l)
     \cdot
     \nabla_x (\Gamma(x,w_h)m) +
     \\
     +
     \int_{D_1 \cap B_\rho}
     (\C^I -\C) \nabla_x (\Gamma^+(x,y_h)l)
     \cdot
     \nabla_x ((\Gamma^{D_2}(x,w_h)-\Gamma(x,w_h))m)+
     \\
     +
     \int_{D_1 \setminus B_\rho}
     (\C^I -\C) \nabla_x ( \Gamma^{D_1}(x,y_h)l)
     \cdot
     \nabla_x(\Gamma^{D_2}(x,w_h)m),
\end{multline}
where $\Gamma(x,y)$ is the \textit{Kelvin fundamental solution} in
$\R^3$ of the Lam\'{e} operator with constant coefficients
$\lambda$, $\mu$. It is well known (see
\cite{Gu72}) that
\begin{equation}
  \label{eq:57.2}
     \Gamma(x,y)= \frac{1}{16\pi\mu(1-\nu)}
     \cdot
     \frac{1}{|x-y|}
     \left (
     \frac{(x-y)\otimes (x-y)   }{|x-y|^2} + (3-4\nu)Id
     \right ).
\end{equation}

Since the leading term of $S_{D_1}$, as $h \rightarrow 0$, is the
first integral in the right hand side of \eqref{eq:57.1}, it is
convenient to represent the domain of integration as follows
\begin{equation}
  \label{eq:57.3}
     D_1 \cap B_\rho = B_\rho^+ \cup (D_1 \cap B_\rho^-) \setminus
     (B_\rho^+ \setminus D_1)
\end{equation}
and rewrite \eqref{eq:57.1} as
\begin{equation}
  \label{eq:58.1}
     S_{D_1}(y_h,w_h;l,m)= I_1 +R_1 +R_2 +R_3,
\end{equation}
where
\begin{equation}
  \label{eq:58.2}
    I_1 = \int_{B_\rho^+}
     (\C^I -\C) \nabla_x ( \Gamma^+(x,y_h)l)
     \cdot
     \nabla_x(\Gamma(x,w_h)m),
\end{equation}
\begin{multline}
  \label{eq:58.3}
    R_1 = \int_{D_1 \cap B_\rho^-}
     (\C^I -\C) \nabla_x ( \Gamma^+(x,y_h)l)
     \cdot
     \nabla_x(\Gamma(x,w_h)m) -
     \\
     - \int_{B_\rho^+ \setminus D_1}
     (\C^I -\C) \nabla_x ( \Gamma^+(x,y_h)l)
     \cdot
     \nabla_x(\Gamma(x,w_h)m),
\end{multline}
\begin{equation}
  \label{eq:58.4}
    R_2= \int_{D_1 \setminus B_\rho}
     (\C^I -\C) \nabla_x ( \Gamma^{D_1}(x,y_h)l)
     \cdot
     \nabla_x(\Gamma^{D_2}(x,w_h)m),
\end{equation}
\begin{multline}
  \label{eq:58.5}
    R_3 =   \int_{D_1 \cap B_\rho}
     (\C^I -\C) \nabla_x ((\Gamma^{D_1}(x,y_h)-\Gamma^+(x,y_h))l)
     \cdot
     \nabla_x (\Gamma(x,w_h)m) +
     \\
     + \int_{D_1 \cap B_\rho}
     (\C^I -\C) \nabla_x (\Gamma^{D_1}(x,y_h)l)
     \cdot
     \nabla_x ((\Gamma^{D_2}(x,w_h)-\Gamma(x,w_h))m).
\end{multline}
\begin{lem}
    \label{lem:59.1}
\begin{multline}
  \label{eq:59.1}
    I_1 = \frac{1}{h}\int_{R^3_+}
     (\C^I -\C) \nabla_x ( \Gamma^+(x,-e_3)l)
     \cdot
     \nabla_x(\Gamma(x,-\lambda_w e_3)m) -
     \\
     -  \frac{1}{h}\int_{R^3_+ \setminus B_{ \frac{\rho}{h}}^+}
     (\C^I -\C) \nabla_x ( \Gamma^+(x,-e_3)l)
     \cdot
     \nabla_x(\Gamma(x,-\lambda_w e_3)m).
\end{multline}
\end{lem}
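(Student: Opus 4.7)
The identity is a pure scaling argument: it exploits the fact that both the half-space $\R^3_+$ and the piecewise constant tensor $\C + (\C^I-\C)\chi_{\R^3_+}$ are invariant under dilations $x \mapsto tx$, together with the homogeneity of degree $-1$ of the fundamental matrices $\Gamma^+$ and $\Gamma$.

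The plan is as follows. First I would establish the scaling identity
\[
\Gamma^+(tx,ty) \;=\; t^{-1}\Gamma^+(x,y), \qquad t>0,
\]
and its analogue for the Kelvin matrix $\Gamma$. For $\Gamma$ this is immediate from the explicit expression \eqref{eq:57.2}. For $\Gamma^+$ it follows by setting $\widetilde G(x,y) := t\,\Gamma^+(tx,ty)$ and substituting into the defining equation \eqref{eq:13.2} with $D=\R^3_+$: because $\chi_{\R^3_+}(tx)=\chi_{\R^3_+}(x)$, because $\nabla_x$ and $\mathrm{div}_x$ each contribute a factor $t^{-1}$, and because $\delta(tx-ty)=t^{-3}\delta(x-y)$, the function $\widetilde G(\cdot,y)l$ solves the same PDE as $\Gamma^+(\cdot,y)l$ with source $l\delta(\cdot-y)$ and decays at infinity. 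Uniqueness from Proposition \ref{prop:14.1} then forces $\widetilde G = \Gamma^+$.

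Next I would perform the change of variables $x = h\widetilde x$ in \eqref{eq:58.2}. Since $y_h = -he_3$ and $w_h = -\lambda_w h e_3$, the scaling identities give
\[
\nabla_x \Gamma^+(x,y_h) \;=\; h^{-2}\,\nabla_{\widetilde x}\Gamma^+(\widetilde x, -e_3),\qquad
\nabla_x \Gamma(x,w_h) \;=\; h^{-2}\,\nabla_{\widetilde x}\Gamma(\widetilde x, -\lambda_w e_3),
\]
while $dx = h^{3}\, d\widetilde x$ and the image of $B_\rho^+$ is $B_{\rho/h}^+$. The powers of $h$ collapse to $h^{-4}\cdot h^{3} = h^{-1}$, yielding
\[
I_1 \;=\; \frac{1}{h}\int_{B_{\rho/h}^+} (\C^I-\C)\,\nabla_{\widetilde x}\Gamma^+(\widetilde x,-e_3)l \cdot \nabla_{\widetilde x}\Gamma(\widetilde x,-\lambda_w e_3)m.
\]

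Finally, writing $B_{\rho/h}^+ = \R^3_+ \setminus (\R^3_+\setminus B_{\rho/h}^+)$ and splitting the integral gives exactly \eqref{eq:59.1}. The only non-routine item in this chain is the verification of the homogeneity of $\Gamma^+$ in Step 1; once that is in place, the rest is a one-line change of variable. No genuine obstacle is expected, and the role of the lemma is to set up the subsequent analysis in which, as $h \to 0$, the second (tail) term becomes negligible and the first term furnishes the leading $\frac{1}{h}$-blowup of $I_1$.
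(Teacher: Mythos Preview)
Your proposal is correct and follows essentially the same approach as the paper: establish the homogeneity $\Gamma^+(h\xi,hy_0)=h^{-1}\Gamma^+(\xi,y_0)$ (and likewise for $\Gamma$), then rescale $x=h\widetilde{x}$ in $I_1$ and split the resulting integral over $B^+_{\rho/h}$ as $\R^3_+$ minus the tail. The only cosmetic difference is that the paper verifies the scaling identity via the weak formulation \eqref{eq:14.1} with a rescaled test function, whereas you invoke the distributional scaling of $\delta$ and uniqueness from Proposition~\ref{prop:14.1}; these are equivalent.
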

\begin{proof}
Let us start with proving the following identities, which hold for
any $h>0$, $\xi$, $y_0\in \R^3$, $\xi \neq y_0$:
\begin{equation}
  \label{eq:59.2}
    \Gamma(\xi, y_0) = h \Gamma(h \xi, h y_0), \quad
    \Gamma^+(\xi,y_0) = h \Gamma^+(h\xi,hy_0).
\end{equation}
Let us prove the first identity. By the definition of
$\Gamma(\cdot,y)$, we have
\begin{equation}
  \label{eq:59.3}
    \int_{\R^3} \C \nabla_x (\Gamma(x,y)l) \cdot
    \nabla_x\varphi(x) = l \cdot \varphi (y), \quad \hbox{for
    every } \varphi \in C_0^\infty(\R^3).
\end{equation}
By choosing $y=hy_0$ and performing the change of variables $\xi=
\frac{x}{h}$, we have
\begin{equation}
  \label{eq:59.4}
    \int_{\R^3} \C \nabla_\xi (h\Gamma(h\xi,h y_0)l) \cdot
    \nabla_\xi \psi(\xi) = l \cdot \psi (y_0), \quad \hbox{for
    every } \psi \in C_0^\infty(\R^3),
\end{equation}
where $\psi(\xi)=\phi(h\xi)$, that is the first identity in
\eqref{eq:59.2} holds. The second identity in \eqref{eq:59.2} can
be derived similarly, taking into account that
$\chi^+(hx)=\chi^+(x)$ for every $h >0$.

By applying the change of variables $\xi = \frac{x}{h}$ to $I_1$,
recalling \eqref{eq:55.1}, \eqref{eq:55.2} and using
\eqref{eq:59.2}, we obtain the identity \eqref{eq:59.1}.
\end{proof}
Let us set
\begin{equation}
  \label{eq:60.0}
    I_1 = I_1' - I_1'',
\end{equation}
where
\begin{equation}
  \label{eq:60.0bis}
    I_1' = \frac{1}{h}\int_{R^3_+}
     (\C^I -\C) \nabla_x ( \Gamma^+(x,-e_3)l)
     \cdot
     \nabla_x(\Gamma(x,-\lambda_w e_3)m),
\end{equation}
\begin{equation}
  \label{eq:60.0ter}
    I_1'' = \frac{1}{h}\int_{R^3_+ \setminus B_{ \frac{\rho}{h}}^+}
     (\C^I -\C) \nabla_x ( \Gamma^+(x,-e_3)l)
     \cdot
     \nabla_x(\Gamma(x,-\lambda_w e_3)m).
\end{equation}

Let us first estimate {}from above $I_1''$. By recalling
\eqref{eq:14.5} and observing that $|\xi + e_3| \geq
|\xi|$, $|\xi +\lambda_w e_3| \geq |\xi|$, we have
\begin{equation}
  \label{eq:60.1}
    |I_1''| \leq \frac{C}{\rho},
\end{equation}
where $C>0$ only depends on $\alpha_0$, $\gamma_0$,
$\overline{\lambda}$, $\overline{\mu}$.

In order to evaluate $I_1'$, let us premise the following
identity.
\begin{lem}
    \label{lem:60.1}
\begin{multline}
  \label{eq:60.2}
     \int_{\R^3_+}
     (\C^I -\C) \nabla_x ( \Gamma^+(x,y_0)l)
     \cdot
     \nabla_x(\Gamma(x,w_0)m) =
     (\Gamma(y_0,w_0) -\Gamma^+(y_0,w_0))m \cdot l,
     \\
     \hbox{for every } y_0, w_0 \in \R^{3}, \ y_0 \neq w_0.
\end{multline}
\end{lem}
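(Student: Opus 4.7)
The plan is to mimic the proof of Lemma \ref{lem:21.0} (Alessandrini's identity) in the specialized configuration where one ``inclusion'' is the half-space $\R^3_+$ and the other is the empty set, so that the two corresponding fundamental matrices are $\Gamma^+$ and the free-space Kelvin matrix $\Gamma$ respectively. Set $u(x)=\Gamma^+(x,y_0)l$, $v(x)=\Gamma(x,w_0)m$ and $\C_+=\C+(\C^I-\C)\chi_+$. By the defining equations of the two fundamental matrices we have $\divrg(\C_+\nabla u)=-l\delta_{y_0}$ and $\divrg(\C\nabla v)=-m\delta_{w_0}$.

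First I would apply Green's formula on $B_R(0)$, with $R$ large enough so that $|y_0|,|w_0|<R$, pairing $\C_+\nabla u$ with $v$ and separately $\C\nabla v$ with $u$, and I would use the major symmetry \eqref{eq:2.2bis} of $\C$ to rearrange the two resulting bilinear forms. Splitting $\C_+=\C+(\C^I-\C)\chi_+$ and subtracting the two Green identities, the common $\C$ piece cancels and one is left with
\begin{equation*}
\int_{B_R\cap\R^3_+}(\C^I-\C)\nabla u\cdot\nabla v \;=\; \Gamma(y_0,w_0)m\cdot l-\Gamma^+(y_0,w_0)m\cdot l+\mathcal{B}_R,
\end{equation*}
where the two point-mass terms arise from $v(y_0)$ and $u(w_0)$ (invoking \eqref{eq:14.3} to convert $l\cdot\Gamma(y_0,w_0)m$ and $m\cdot\Gamma^+(w_0,y_0)l$ into the scalar form used on the right-hand side), and $\mathcal{B}_R$ collects two surface integrals over $\partial B_R$.

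Second, I would show that $\mathcal{B}_R\to 0$ as $R\to\infty$ and pass to the limit. Each boundary integrand is a product of a factor of type $\Gamma$ or $\Gamma^+$ (size $O(R^{-1})$) and a factor of type $\nabla_x\Gamma$ or $\nabla_x\Gamma^+$ (size $O(R^{-2})$), and $|\partial B_R|=O(R^2)$, giving $\mathcal{B}_R=O(R^{-1})$. This completes the proof.

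The only subtlety, and the point where this argument genuinely differs from the proof of Lemma \ref{lem:21.0}, is that Proposition \ref{prop:14.1} has been established for \emph{bounded} inclusions $D$, while here $D=\R^3_+$ is unbounded, so the decay bounds \eqref{eq:14.4}--\eqref{eq:14.5} are not formally available for $\Gamma^+$. However these bounds follow directly from Rongved's closed-form expression of $\Gamma^+$ analyzed in Section \ref{Rongved}; alternatively they can be recovered by approximating $\R^3_+$ by large bounded $C^{1,\alpha}$ domains, applying Proposition \ref{prop:14.1} with uniform constants, and passing to the limit. Once the decay of $\Gamma^+$ and $\nabla\Gamma^+$ is in hand, the rest of the argument is a direct calculation.
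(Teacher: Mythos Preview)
Your proposal is correct and follows essentially the same approach as the paper, which simply states that the identity is a special case of \cite[Proposition 3.2]{BFV13} and that the proof is analogous to that of Lemma \ref{lem:21.0}. Your additional remark on the unboundedness of $\R^3_+$ is well taken; the paper handles this implicitly by relying on Rongved's explicit formulas for $\Gamma^+$.
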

\begin{proof}
This is a special case of \cite[Proposition 3.2]{BFV13}, the proof is analogous to the one of Lemma \ref{lem:21.0}.
\end{proof}

\begin{prop}
    \label{prop:62.1}
Let $y_0=(0,0,-1)$,
$w_0=(0,0,-\lambda_w)$. For every $i=1,2,3$, there exists $\lambda_w \in \left\{
\frac{2}{3}, \frac{3}{4}, \frac{4}{5} \right\}$ such that
\begin{equation}
  \label{eq:62.2}
     \left | (\Gamma^+(y_0,w_0)- \Gamma(y_0,w_0))e_i \cdot e_i \right |
     \geq
     \mathcal{C},
\end{equation}
where $\mathcal{C} > 0$ only depends on $\alpha_0$, $\gamma_0$,
$\overline{\lambda}$, $\overline{\mu}$, $\eta_0$.
\end{prop}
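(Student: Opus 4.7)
The plan is to combine the explicit algebraic form of Rongved's fundamental solution $\Gamma^+$ (to be analyzed in Section \ref{Rongved}) with a compactness argument on the admissible Lam\'e parameters. Set
\[
\phi_i(t;\lambda,\mu,\lambda^I,\mu^I) := \bigl(\Gamma^+(y_0,-t e_3)-\Gamma(y_0,-t e_3)\bigr)e_i\cdot e_i,\qquad t>0.
\]
Since $y_0=-e_3$ and $-t e_3$ both sit in $\R^3_-$, Rongved's formulas give $\phi_i(t)$ in closed form as a (relatively simple) rational function of $t$ whose coefficients depend algebraically on the four moduli. The first step is to read off this closed form from Rongved's expressions, paying particular attention to the diagonal entries $(\Gamma^+-\Gamma)_{ii}$, which will be the core content of Section \ref{Rongved}.

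Next I would show that, under the jump condition \eqref{eq:3.1}, for each $i\in\{1,2,3\}$ the function $t\mapsto \phi_i(t)$ cannot vanish at all three of the values $t\in\{2/3,\,3/4,\,4/5\}$. The idea is that $\phi_i(\cdot)$ is a rational function whose numerator has controlled degree in $t$; in particular, three coincident zeros inside $(0,1)$ would either force $\phi_i\equiv 0$ as a function of $t$, or impose an algebraic relation between $(\lambda,\mu)$ and $(\lambda^I,\mu^I)$ that is incompatible with $(\lambda-\lambda^I)^2+(\mu-\mu^I)^2\geq\eta_0^2$. The case $\phi_i\equiv 0$ is ruled out either by inspecting a leading-order coefficient in $t$ in Rongved's formula, or more conceptually by noting that if $(\Gamma^+-\Gamma)_{ii}$ vanished on a whole segment it would continue identically, contradicting the jump of the coefficients across $\{x_3=0\}$ granted by \eqref{eq:3.1}. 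Precisely because in general $\phi_i(1)$ can vanish (as will be illustrated by the explicit examples in Section \ref{Rongved}), one must use the flexibility $\lambda_w\neq 1$ and at least three test values; the choice $\{2/3,3/4,4/5\}$ is dictated by the need to avoid the exceptional coincidences exhibited in those examples.

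Finally, I would close by a compactness/continuity argument. The admissible moduli $(\lambda,\mu,\lambda^I,\mu^I)$ satisfying \eqref{eq:2.2}, \eqref{eq:2.2upper}, \eqref{eq:3.1} form a compact set $K$, and the map
\[
K\ni(\lambda,\mu,\lambda^I,\mu^I)\longmapsto \Psi_i:=\max_{\lambda_w\in\{2/3,3/4,4/5\}}\bigl|\phi_i(\lambda_w;\lambda,\mu,\lambda^I,\mu^I)\bigr|
\]
is continuous. By the previous step $\Psi_i>0$ pointwise on $K$, hence $\Psi_i\geq\mathcal{C}_i>0$ on $K$ for a constant depending only on $\alpha_0,\gamma_0,\overline{\mu},\overline{\lambda},\eta_0$. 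Taking $\mathcal{C}=\min_i \mathcal{C}_i$ and, for each $i$, picking the value of $\lambda_w\in\{2/3,3/4,4/5\}$ at which the max is attained yields \eqref{eq:62.2}.

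The main obstacle is the middle step: extracting from the (notoriously intricate) Rongved expressions a sharp enough algebraic description of $\phi_i(t)$ to exclude simultaneous vanishing at the three prescribed points and to exclude $\phi_i\equiv 0$. This is precisely the place where the scalar-to-vector passage becomes delicate, since, as emphasized in the introduction, no positivity or monotonicity in $\C-\C^I$ is available, and one cannot simply take $w_0=y_0$. The three test values $2/3$, $3/4$, $4/5$ must be shown to overdetermine the numerator in $t$, so that forcing them to be common zeros collapses the formula to the trivial case $\C=\C^I$, contradicting \eqref{eq:3.1}.
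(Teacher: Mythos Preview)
Your outline is on the right track and shares the paper's starting point (Rongved's explicit formulas), but the crucial algebraic input is left vague, and the paper's argument is sharper and fully constructive in a way yours is not.

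The decisive observation in the paper is that, after passing to the variable $t=1+\lambda_w\in(1,2)$, each diagonal difference satisfies
\[
(\Gamma^+-\Gamma)_{ii}(y_0,w_0)=\frac{\mathcal{P}(t)}{t^3\,\Delta(\mu,\nu,\mu^I,\nu^I)},
\]
where $\Delta$ is a nonvanishing product of moduli-dependent factors and $\mathcal{P}(t)=\alpha t^2+\beta t+\gamma$ is a \emph{quadratic} polynomial in $t$. This exact degree is what makes three test values sufficient: on the three-dimensional space of quadratics the evaluation map at any three distinct points is an isomorphism, so
\[
\mathcal{P}(t_1)^2+\mathcal{P}(t_2)^2+\mathcal{P}(t_3)^2\geq c(t_1,t_2,t_3)\,(\alpha^2+\beta^2+\gamma^2).
\]
The paper then computes directly that $\alpha^2+\beta^2+\gamma^2=(A\,\delta\mu+B\,\delta\nu)^2+C^2(\delta\mu)^2$ with $B^2,C^2\geq K^{-1}$ and $A^2\leq K$, whence $\alpha^2+\beta^2+\gamma^2\gtrsim(\delta\mu)^2+(\delta\nu)^2\gtrsim\eta_0^2$ by \eqref{eq:3.1bis}. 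This yields \eqref{eq:62.2} with an explicit constant and no compactness step.

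Your middle step (``three zeros force $\phi_i\equiv 0$ or impose an incompatible algebraic relation'') is not valid without the degree bound: if the numerator had degree $\geq 3$, vanishing at three specific points would only cut out a positive-codimension variety in parameter space, which can certainly meet the jump set \eqref{eq:3.1}. So you must actually extract from Rongved's formulas that the numerator is quadratic; once you do, the ``incompatible algebraic relation'' branch disappears and the argument collapses to the paper's. Your compactness finish is correct but unnecessary (and non-constructive): once the degree is $2$ and the coefficient norm is bounded below, the quantitative bound follows from elementary linear algebra.
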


The proof is postponed to the next Section
\ref{Rongved}.

\begin{rem}
  \label{rem:finitely}
We note that for the present purposes it would suffice to prove that \eqref{eq:62.2} holds
true for at least one $i=1,2,3$. We believe that such a result, slightly stronger than necessary, may be instructive because we shall show by examples (Section \ref{Rongved}) that there may be values of $\lambda_w$ for which $(\Gamma^+(y_0,w_0)-\Gamma(y_0,w_0))e_i\cdot e_i$ may indeed equal zero.

On the other hand, taking advantage of the explicit character of $\Gamma^+$, we shall prove that
$(\Gamma^+(y_0,w_0)-\Gamma(y_0,w_0))e_i\cdot e_i$ may vanish only for finitely many values of $\lambda_w$.

Finally, let us note that we restrict the choice of $\lambda_w$
to three specific values just for the sake of definiteness and also with the purpose of having
\eqref{eq:62.2} in a constructive form.
\end{rem}

\medskip

Choosing $l=m=e_i$, $i=1,2,3$, and
taking into account \eqref{eq:59.1}, \eqref{eq:60.1},
\eqref{eq:60.2} and \eqref{eq:62.2}, we have
\begin{equation}
  \label{eq:64.2}
     I_1 \geq \frac{\mathcal{C}}{h} + I_1'', \quad \hbox{with }
     |I_1''| \leq \frac{C}{\rho},
\end{equation}
where $\mathcal{C} > 0$ and $C>0$ only depend on $\alpha_0$,
$\gamma_0$, $\overline{\lambda}$, $\overline{\mu}$, $\eta_0$.

Let us estimate $|R_1|$ {}from above. By recalling
\eqref{eq:58.3}, by using \eqref{eq:14.5}, \eqref{eq:57.2}, and by
the change of variables $y= \frac{x}{h}$, we have
\begin{multline}
  \label{eq:64.3}
     |R_1| \leq
     C
     \int_{\R^2}
     \left (
     \int_{ - \frac{M_0}{\rho_0^\alpha}|x'|^{1+\alpha}}^{ \frac{M_0}{\rho_0^\alpha}|x'|^{1+\alpha}}
     |x-y_h|^{-2} |x-w_h|^{-2} dx_3
     \right )
     dx_1 dx_2 = \\
     = C
      \int_{\R^2}
     \left (
     \int_{ - \frac{M_0}{\rho_0^\alpha}|x'|^{1+\alpha}}^{ \frac{M_0}{\rho_0^\alpha}|x'|^{1+\alpha}}
     \frac{1}{ ( |x'|^2 + (x_3+h)^2 )
      ( |x'|^2 + (x_3+\lambda_w h)^2 ) } dx_3
     \right )
     dx_1 dx_2 =
     \\
     = \frac{C}{h}
      \int_{\R^2}
     \left (
     \int_{ - \frac{M_0}{\rho_0^\alpha}h^\alpha |y'|^{1+\alpha}}^{ \frac{M_0}{\rho_0^\alpha}h^\alpha |y'|^{1+\alpha}}
     \frac{1}{ ( |y'|^2 + (y_3+1)^2 )
      ( |y'|^2 + (y_3+\lambda_w )^2 ) } dy_3
     \right )
     dy_1 dy_2,
\end{multline}
where $C>0$ only depends on $M_0$, $\alpha$, $\alpha_0$,
$\gamma_0$, $\overline{\lambda}$, $\overline{\mu}$.

It is convenient to split $\R^2$ as the union of the sets $A=\{ y'
\in \R^2 \ | \ |y'| \geq \left (   \frac{2M_0}{\rho_0^\alpha}
\right )^{- \frac{1}{1+\alpha}} h^{- \frac{\alpha}{1+\alpha}} \}$,
$B=\{ y' \in \R^2 \ | \ |y'| < \left (
\frac{2M_0}{\rho_0^\alpha} \right )^{- \frac{1}{1+\alpha}} h^{-
\frac{\alpha}{1+\alpha} }\}$ and to estimate the integral in the right hand side of
\eqref{eq:64.3} separately in $A$ and $B$. We obviously have
\begin{multline}
  \label{eq:64.3a}
     \frac{C}{h}
      \int_{A}
     \left (
     \int_{ - \frac{M_0}{\rho_0^\alpha}h^\alpha |y'|^{1+\alpha}}^{ \frac{M_0}{\rho_0^\alpha}h^\alpha |y'|^{1+\alpha}}
     \frac{1}{ ( |y'|^2 + (y_3+1)^2 )
      ( |y'|^2 + (y_3+\lambda_w )^2 ) } dy_3
     \right )
     dy_1 dy_2 \leq \\
     \leq
     \frac{C}{\rho_0^\alpha} h^{\alpha - 1}
     \int_{A}
     \frac{|y'|^{1+\alpha}   }{  |y'|^4   }
     dy_1 dy_2
     =
      \frac{C}{\rho_0} \left (  \frac{h}{\rho_0} \right ) ^{\alpha - 1}
     \int_{  \left (  \frac{2M_0}{\rho_0^\alpha} \right )  ^{- \frac{1}{1+\alpha}} h^{-
    \frac{\alpha}{1+\alpha}}  }^{+\infty} r^{\alpha -2 } dr
    =
    \frac{C}{\rho_0} \left (  \frac{h}{\rho_0} \right )  ^{ \frac{\alpha - 1 }{\alpha + 1}  },
\end{multline}
where $C>0$ only depends on $M_0$, $\alpha$, $\alpha_0$,
$\gamma_0$, $\overline{\lambda}$, $\overline{\mu}$.

In $B$ we have that $|y_3| < \frac{1}{2}$, so that $|y_3 + 1 | >
\frac{1}{2}$ and $|y_3 + \lambda_w| \geq \lambda_w -
\frac{1}{2}\geq \frac{2}{3} - \frac{1}{2} = \frac{1}{6}$.
Therefore
\begin{multline}
  \label{eq:64.3b}
     \frac{C}{h}
      \int_{B}
     \left (
     \int_{ - \frac{M_0}{\rho_0^\alpha}h^\alpha |y'|^{1+\alpha}}^{ \frac{M_0}{\rho_0^\alpha}h^\alpha |y'|^{1+\alpha}}
     \frac{1}{ ( |y'|^2 + (y_3+1)^2 )
      ( |y'|^2 + (y_3+\lambda_w )^2 ) } dy_3
     \right )
     dy_1 dy_2 \leq \\
     \leq
     \frac{C}{\rho_0^\alpha} h^{\alpha - 1}
     \int_{B}
     \frac{|y'|^{1+\alpha}   }{ ( |y'|^2  + \frac{1}{36} )^2  }
     \leq
     \frac{C}{\rho_0} \left (  \frac{h}{\rho_0} \right ) ^{\alpha - 1}
     \int_{\R^2}
     \frac{|y'|^{1+\alpha}   }{ ( |y'|^2  + \frac{1}{36} )^2  } =
     \frac{C}{\rho_0} \left (  \frac{h}{\rho_0} \right ) ^{\alpha - 1},
\end{multline}
where $C>0$ only depends on $M_0$, $\alpha$, $\alpha_0$,
$\gamma_0$, $\overline{\lambda}$, $\overline{\mu}$. By
\eqref{eq:64.3}--\eqref{eq:64.3b}, and recalling that $
\frac{h}{\rho_0} \leq 1$,  we have
\begin{equation}
  \label{eq:64.3c}
    |R_1| \leq
    \frac{C}{\rho_0} \left (  \frac{h}{\rho_0} \right ) ^{\alpha - 1},
\end{equation}
where $C>0$ only depends on $M_0$, $\alpha$, $\alpha_0$,
$\gamma_0$, $\overline{\lambda}$, $\overline{\mu}$.

\medskip

Let us estimate $R_2$ {}from above. By \eqref{eq:14.5}, we have
\begin{equation}
  \label{eq:68.1}
    |R_2| \leq C \int_{\R^3 \setminus B_\rho}
    |x-y_h|^{-2}|x-w_h|^{-2},
\end{equation}
where $C>0$ only depends on $M_0$, $\alpha$, $\alpha_0$,
$\gamma_0$, $\overline{\lambda}$, $\overline{\mu}$. Since $ h \leq
\frac{\rho}{2}$ and $\lambda_w \in (0,1)$, we have $B_h(y_h)
\subset B_\rho$, $B_h(w_h) \subset B_\rho$, so that $|x-y_h| \geq
h$, $|x-w_h| \geq h$ for every $x \in \R^3 \setminus B_\rho$.
Moreover, $|x-y_h| \geq |x|-h$, $|x-w_h| \geq |x|-\lambda_wh \geq
|x|- h$. Passing to spherical coordinates, denoting by $r$ the radial coordinate and taking into account
that, since $h \leq \frac{\rho}{2} \leq \frac{r}{2}$, we have $r-h
\geq \frac{r}{2}$, it follows that
\begin{equation}
  \label{eq:68.2}
    |R_2| \leq C \int_\rho^\infty
    \frac{r^2}{(r-h)^4}dr \leq  C \int_\rho^\infty \frac{dr}{r^2}
    = \frac{C}{\rho},
\end{equation}
where $C>0$ only depends on $M_0$, $\alpha$, $\alpha_0$,
$\gamma_0$, $\overline{\lambda}$, $\overline{\mu}$.

Let us estimate $R_3$ {}from above. To this aim, let us set

\begin{equation}
  \label{eq:68.3}
    R_3 = R_3' + R_3'',
\end{equation}
where
\begin{equation}
  \label{eq:68.3bis}
    R_3' = \int_{D_1 \cap B_\rho}
     (\C^I -\C) \nabla_x ((\Gamma^{D_1}(x,y_h)-\Gamma^+(x,y_h))l)
     \cdot
     \nabla_x (\Gamma(x,w_h)m),
\end{equation}
\begin{equation}
  \label{eq:68.3ter}
    R_3'' = \int_{D_1 \cap B_\rho}
     (\C^I -\C) \nabla_x (\Gamma^{D_1}(x,y_h)l)
     \cdot
     \nabla_x ((\Gamma^{D_2}(x,w_h)-\Gamma(x,w_h))m).
\end{equation}

Noticing that $D_1 \cap B_{\rho} \subset \R^3_+ \cup \left ( \R^3
\cap \left \{ - \frac{M_0}{\rho_0^\alpha}|x'|^{1+\alpha} \leq x_3 \leq 0 \right \}
\right )$, by \eqref{eq:38.2}, \eqref{eq:57.2}, \eqref{eq:14.5} we
have
\begin{multline}
  \label{eq:69.1}
    R_3' \leq \frac{C}{\rho_0^\gamma} \int_{\R^3_+}
    |x-y_h|^{-2+\gamma}|x-w_h|^{-2} + C \int_{\R^3
\cap \left \{ - \frac{M_0}{\rho_0^\alpha}|x'|^{1+\alpha} \leq x_3 \leq 0 \right \}
} |x-y_h|^{-2}|x-w_h|^{-2},
\end{multline}
where $\gamma = \frac{\alpha^2}{3\alpha+2}<\frac{1}{2}$ and $C>0$
only depends on $M_0$, $\alpha$, $\alpha_0$, $\gamma_0$,
$\overline{\lambda}$, $\overline{\mu}$.

By passing to cylindrical coordinates and applying H\"{o}lder
inequality twice, we have
\begin{multline}
  \label{eq:69.2}
    \frac{1}{\rho_0^\gamma} \int_{\R^3_+}
    |x-y_h|^{-2+\gamma}|x-w_h|^{-2}\leq\\
    \leq \frac{C}{\rho_0^\gamma} \int_0^\infty \left (
    \int_0^\infty r(r^2 + (x_3 +h)^2)^{-1 + \frac{\gamma}{2}} (r^2 + (x_3 +\lambda_w h)^2)^{-1} dr
    \right ) dx_3 \leq
    \\
    \leq
    \frac{C}{\rho_0^\gamma} \int_0^\infty
    (x_3 + h)^{\gamma -1} (x_3 + \lambda_w h)^{ -1}
    dx_3 = \frac{C}{\rho_0} \left ( \frac{h}{\rho_0} \right
    )^{\gamma -1},
\end{multline}
where $C>0$ only depends on $M_0$, $\alpha$, $\alpha_0$,
$\gamma_0$, $\overline{\lambda}$, $\overline{\mu}$.

On the other hand, by \eqref{eq:64.3}, \eqref{eq:64.3c} we have
\begin{multline}
  \label{eq:69.3}
     \int_{\R^3
\cap \left \{ - \frac{M_0}{\rho_0^\alpha}|x'|^{1+\alpha} \leq x_3 \leq 0 \right \}
} |x-y_h|^{-2}|x-w_h|^{-2}\leq\\
     \leq
     \int_{\R^2}
     \left (
     \int_{ - \frac{M_0}{\rho_0^\alpha}|x'|^{1+\alpha}}^{ \frac{M_0}{\rho_0^\alpha}|x'|^{1+\alpha}}
     |x-y_h|^{-2} |x-w_h|^{-2} dx_3
     \right )
     dx_1 dx_2\leq
    \frac{C}{\rho_0} \left (  \frac{h}{\rho_0} \right ) ^{\alpha - 1},
\end{multline}
where $C>0$ only depends on $M_0$, $\alpha$, $\alpha_0$,
$\gamma_0$, $\overline{\lambda}$, $\overline{\mu}$.

In order to estimate $R_3''$, let us notice that, by
\eqref{eq:55.4}, $D_2 \cap B_\rho = \emptyset$, so that, by $\lambda_w h\leq h\leq\frac{\rho}{2}$, $D_2 \cap B_{ \frac{\rho}{2}} (w_h) =
\emptyset$. Therefore, the function
\begin{equation}
  \label{eq:70.1}
    v(x,w_h) = ( \Gamma^{D_2}(x,w_h) - \Gamma(x,w_h))m
\end{equation}
satisfies the Lam\'{e} system with constant coefficients
$\lambda$, $\mu$
\begin{equation}
  \label{eq:70.2}
    \divrg_x (\C \nabla_x v(x,w_h))=0, \quad \hbox{in } B_{ \frac{\rho}{2}}
    (w_h).
\end{equation}
By standard regularity estimates, we have
\begin{equation}
  \label{eq:70.3}
    \sup_{ B_{ \frac{\rho}{4}}(w_h)} | \nabla_x v(x, w_h)| \leq \frac{C}{\rho^{ \frac{5}{2}  }}
    \left ( \int_{B_{ \frac{\rho}{2}}(w_h)} |v(x,w_h)|^2 \right )^{
    \frac{1}{2}},
\end{equation}
where $C>0$ only depends on $\alpha_0$, $\gamma_0$,
$\overline{\lambda}$, $\overline{\mu}$.

At this stage, we apply the Maximum Modulus Theorem by Fichera
\cite{Fi61}, which asserts that
\begin{equation}
  \label{eq:70.4}
    \sup_{ B_{ \frac{\rho}{2}}(w_h)} | v(x, w_h)| \leq C \sup_{ \partial B_{ \frac{\rho}{2}}(w_h)} | v(x, w_h)|,
\end{equation}
where $C>0$ only depends on $\alpha_0$, $\gamma_0$,
$\overline{\lambda}$, $\overline{\mu}$.

By \eqref{eq:70.3}, \eqref{eq:70.4}, \eqref{eq:14.5},
\eqref{eq:57.2}, we have
\begin{equation}
  \label{eq:70.5}
    \sup_{ B_{ \frac{\rho}{4}}(w_h)} | \nabla_x v(x, w_h)| \leq \frac{C}{\rho^{2}},
\end{equation}
where $C>0$ only depends on $M_0$, $\alpha_0$, $\gamma_0$,
$\overline{\lambda}$, $\overline{\mu}$.

It is convenient to split the integral $R_3''$ as follows
\begin{equation}
  \label{eq:71.5}
  R_3'' = R_{3a}'' + R_{3b}'' ,
\end{equation}
where
\begin{multline}
  \label{eq:71.5bis}
   R_{3a}''= \int_{ B_{ \frac{\rho}{4}}(w_h) \cap D_1}
     (\C^I -\C) \nabla_x (\Gamma^{D_1}(x,y_h)l)
     \cdot
     \nabla_x ((\Gamma^{D_2}(x,w_h)-\Gamma(x,w_h))m),
\end{multline}
\begin{multline}
  \label{eq:71.5ter}
   R_{3b}''=
     \int_{ (D_1 \cap B_\rho) \setminus B_{ \frac{\rho}{4}}(w_h)}
     (\C^I -\C) \nabla_x (\Gamma^{D_1}(x,y_h)l)
     \cdot
     \nabla_x ((\Gamma^{D_2}(x,w_h)-\Gamma(x,w_h))m).
\end{multline}
By \eqref{eq:14.5}, \eqref{eq:70.5} and noticing that, by $h\leq \frac{\rho}{2}$, $B_{\frac{\rho}{4}}(w_h)\subset
B_{\frac{3\rho}{4}}(y_h)$,
we have
\begin{equation}
  \label{eq:71.6}
    R_{3a}'' \leq \frac{C}{\rho^2} \int_{ B_{ \frac{\rho}{4}}(w_h)}
    |x-y_h|^{-2} \leq
    \frac{C}{\rho^2} \int_{ B_{ \frac{3\rho}{4}}(y_h)}
    |x-y_h|^{-2} \leq
    \frac{C}{\rho},
\end{equation}
where $C>0$ only depends on $M_0$, $\alpha_0$, $\gamma_0$,
$\overline{\lambda}$, $\overline{\mu}$.

By \eqref{eq:14.5}, by H\"{o}lder inequality and requiring that $\overline{h}\leq \frac{1}{8}$, so that $h\leq \frac{\rho}{8}$ and $B_{\frac{\rho}{8}}(y_h)\subset
B_{\frac{\rho}{4}}(w_h)$,
we have
\begin{multline}
  \label{eq:71.7}
    R_{3b}'' \leq C \int_{ \R^3 \setminus B_{ \frac{\rho}{4}}(w_h)}
    |x-y_h|^{-2}|x-w_h|^{-2} \leq \\
    \leq
    C \left(\int_{ \R^3 \setminus B_{ \frac{\rho}{4}}(w_h)}
    |x-w_h|^{-4} \right)^{\frac{1}{2}}
    \left(\int_{ \R^3 \setminus B_{ \frac{\rho}{8}}(y_h)}
    |x-y_h|^{-4} \right)^{\frac{1}{2}}
    \leq \frac{C}{\rho},
\end{multline}
where $C>0$ only depends on $M_0$, $\alpha_0$, $\gamma_0$,
$\overline{\lambda}$, $\overline{\mu}$.

By \eqref{eq:58.5}, \eqref{eq:69.2},
\eqref{eq:71.5}, \eqref{eq:71.6}, \eqref{eq:71.7}, we have
\begin{equation}
  \label{eq:71BIS.1}
    |R_{3}| \leq \frac{C}{\rho_0}
    \left (
    \left (
    \frac{h}{\rho_0}
    \right )^{\gamma-1}
    +\left ( \frac{h}{\rho_0} \right )^{\alpha-1} \right )
    +
    \frac{C}{\rho},
\end{equation}
where $C>0$ only depends on $M_0$, $\alpha_0$, $\gamma_0$,
$\overline{\lambda}$, $\overline{\mu}$.

Let us estimate {}from above $|S_{D_2} (y_h, w_h)|$. By
\eqref{eq:14.5} and recalling that $D_2 \cap B_{\rho}= \emptyset$,
we have
\begin{equation}
  \label{eq:72.1}
    |S_{D_2}(y_h, w_h)| \leq C \int_{\R^3 \setminus B_{\rho}}
    |x-y_h|^{-2} |x-w_h|^{-2},
\end{equation}
where $C>0$ only depends on $M_0$, $\alpha_0$, $\gamma_0$,
$\overline{\lambda}$, $\overline{\mu}$. Noticing that
\begin{equation}
  \label{eq:72.2}
    |x-y_h| \geq |x| -h, \quad |x-w_h| \geq |x|-h,
\end{equation}
and passing to spherical coordinates, we have
\begin{equation}
  \label{eq:72.3}
    |S_{D_2}(y_h, w_h)| \leq C \int_{\rho}^\infty
    \frac{r^2}{(r-h)^4}dr,
\end{equation}
where $C>0$ only depends on $M_0$, $\alpha_0$, $\gamma_0$,
$\overline{\lambda}$, $\overline{\mu}$. By $h \leq \frac{\rho}{2}
\leq \frac{r}{2}$, we have $r-h \geq \frac{r}{2}$, so that
\begin{equation}
  \label{eq:72.4}
    |S_{D_2}(y_h, w_h)| \leq C \int_{\rho}^\infty
    \frac{dr}{r^2} = \frac{C}{\rho},
\end{equation}
where $C>0$ only depends on $M_0$, $\alpha_0$, $\gamma_0$,
$\overline{\lambda}$, $\overline{\mu}$.

Finally, by \eqref{eq:56.1}, \eqref{eq:58.1}, \eqref{eq:64.2}, \eqref{eq:64.3c},
\eqref{eq:68.2}, \eqref{eq:71BIS.1},
\eqref{eq:72.4}, we have
\begin{equation}
  \label{eq:73.1}
   |f(y_h,w_h;e_i, e_i)| \geq
   \frac{\mathcal{C}}{h}
   \left (
   1 - C_1 \left ( \frac{h}{\rho_0} \right
   )^\alpha - C_2 \left ( \frac{h}{\rho_0} \right )^\gamma- C_3 \frac{h}{\rho}
   \right ), \quad i=1,2,
\end{equation}
where $\gamma=  \frac{\alpha^2}{3\alpha + 2}$ and the constants
$C_i>0$, $i=1,2,3$, only depend on $M_0$, $\alpha$, $\alpha_0$,
$\gamma_0$, $\overline{\lambda}$, $\overline{\mu}$. Therefore,
there exists $ \overline{h}>0$, only depending on  $M_0$,
$\alpha$, $\alpha_0$, $\gamma_0$, $\overline{\lambda}$,
$\overline{\mu}$, $\eta_0$, such that, for any $h$, $0<h< \overline{h}\rho$,
estimate \eqref{eq:55.2} follows.

\section{Rongved's fundamental solution and proof of Proposition \ref{prop:62.1}}
\label{Rongved}

In this section, in order to prove Proposition \ref{prop:62.1}, we
investigate whether there exist directions $l$, $m$ for which
there exists $w_0=(0,0,-c)$, $0<c<1$, such that
\begin{equation}
  \label{eq:81.1}
     (\Gamma^+(y_0, w_0)-\Gamma(y_0,w_0))m \cdot l \neq 0
\end{equation}
for any couple of Lam\'{e} materials with moduli $(\mu,\nu)$,
$(\mu^I, \nu^I)$, where $y_0=(0,0,-1)$. To this aim, we introduce
the closed-form expression of $\Gamma^+$ derived by Rongved
\cite{Ron55}.

Let us choose a coordinate system $(0, e_x, e_y, e_z)$. Consider
the two half-spaces $R^{3+}=\{(x,y,z) \ | \ z>0\}$,
$R^{3-}=\{(x,y,z) \ | \ z<0\}$ made by homogeneous Lam\'{e}
materials with moduli $(\mu, \nu)$, $(\mu^I, \nu^I)$,
respectively. The two half-spaces are glued together on the
interface $z=0$, that is the traction and the displacement both
are continuous across the interface $z=0$. The problem of
determining the displacement field
\begin{equation}
  \label{eq:76.1}
   u^+(\cdot, P) = \Gamma^+(\cdot, P)l
\end{equation}
in $\R^3$ caused by a force $l\in \R^3$, $|l|=1$, acting at the
point $P \equiv (0,0,c)$, $0<c<1$, is described by the following
boundary value problem
\begin{equation}
  \label{eq:76.2}
  \left\{ \begin{array}{ll}
  \mu \Delta u^+ + \frac{\mu}{1-2\nu} \nabla ( \divrg u^+)= -l
  \delta(P), &
  \mathrm{in}\ z>0 ,\\
  &  \\
   \mu^I \Delta u^+ + \frac{\mu^I}{1-2\nu^I} \nabla ( \divrg u^+)= 0,
    &
  \mathrm{in}\ z<0 ,\\
  &  \\
   \left (
    \mu(\nabla u^+ + (\nabla u^+)^T) + \frac{2\mu\nu}{1-2\nu} (\divrg u^+) Id
    \right )e_z |_{(x,y,0^+)} =\\
    =\left (
    \mu^I(\nabla u^+ + (\nabla u^+)^T) +  \frac{2\mu^I\nu^I}{1-2\nu^I} (\divrg u^+) Id
    \right )e_z |_{(x,y,0^-)},
    & \mathrm{on }\ z=0,\\
    &  \\
    u^+ |_{(x,y,0^+)} = u^+ |_{(x,y,0^-)}, & \mathrm{on }\ z=0,\\
    &  \\
     \lim_{|(x,y,z)| \rightarrow \infty} u^+(x,y,z)=0.\\
  \end{array}\right.
\end{equation}

\bigskip

\textit{\textbf{Case 1.} Force $l=e_z$ normal to the interface:}

\medskip

\begin{equation}
  \label{eq:77.1}
   \Gamma^+(\cdot, P) e_z \cdot e_x = \Gamma_{xz}^+(\cdot, P) = - \frac{1}{4(1-\nu)}
   \left (
   \frac{\partial \beta}{\partial x} + z \frac{\partial B_z}{\partial x}
   \right ),
\end{equation}
\begin{equation}
  \label{eq:77.2}
   \Gamma^+(\cdot, P) e_z \cdot e_y = \Gamma_{yz}^+(\cdot, P) = - \frac{1}{4(1-\nu)}
   \left (
   \frac{\partial \beta}{\partial y} + z \frac{\partial B_z}{\partial y}
   \right ),
\end{equation}
\begin{equation}
  \label{eq:77.3}
   \Gamma^+(\cdot, P) e_z \cdot e_z = \Gamma_{zz}^+(\cdot, P) = \frac{3-4\nu}{4(1-\nu)}B_z -
   \frac{1}{4(1-\nu)}\left (
   \frac{\partial \beta}{\partial z} + z \frac{\partial B_z}{\partial
   z},
   \right ),
\end{equation}
where

\medskip

\textit{for $z>0$}:
\begin{equation}
  \label{eq:77.4}
   B_z = \frac{1}{4\pi \mu }
   \left \{
    \frac{1}{R_1} + \frac{\mu-\mu^I}{\mu+\mu^I(3-4\nu)}
    \left (
    \frac{3-4\nu}{R_2} + \frac{2c(z+c)}{R_2^3}
    \right )
    \right \},
\end{equation}
\begin{multline}
  \label{eq:77.5}
   \beta = - \frac{1}{4\pi \mu }
   \left \{
   \frac{c}{R_1} +
   \frac{\mu-\mu^I}{\mu+\mu^I(3-4\nu)}
   \left [
   \frac{c(3-4\nu)}{R_2} -
    \right.
    \right.
\\
   \left.
   \left.
   -\frac{4\mu(1-\nu)}{\mu-\mu^I}
   \left (
   \frac{\mu(1-2\nu)(3-4\nu^I)-\mu^I(1-2\nu^I)(3-4\nu)}{\mu^I+\mu(3-4\nu^I)}
   \right )
   \log (R_2 +z+c)
   \right ]
   \right \},
\end{multline}
with
\begin{equation}
  \label{eq:78.1}
   R_1=(x^2+y^2+(z-c)^2)^{\frac{1}{2}}, \quad
   R_2=(x^2+y^2+(z+c)^2)^{\frac{1}{2}};
\end{equation}
\medskip

\textit{for $z<0$}:
\begin{equation}
  \label{eq:78.2}
   B_z = \frac{1-\nu^I}{\pi R_1 (\mu^I +\mu(3-4\nu^I))},
\end{equation}
\begin{equation}
  \label{eq:78.3}
   \beta = \frac{1-\nu^I}{1-\nu} \left ( - \frac{b_1}{R_1} + b_2
   \log(R_1 -z+c) \right ),
\end{equation}
where
\begin{equation}
  \label{eq:78.4}
   b_1 = \frac{c(1-\nu)}{\pi(\mu + \mu^I(3-4\nu))},
\end{equation}
\begin{equation}
  \label{eq:78.5}
   b_2 = \frac{1-\nu}{\pi(\mu + \mu^I(3-4\nu))} \cdot
   \frac{\mu(1-2\nu)(3-4\nu^I)-\mu^I(1-2\nu^I)(3-4\nu)}{\mu^I+\mu(3-4\nu^I)}.
\end{equation}
\medskip

\textit{\textbf{Case 2.} Force $l=e_x$ parallel to the interface:}

\medskip

\begin{equation}
  \label{eq:78.6}
   \Gamma^+(\cdot, P) e_x \cdot e_x=\Gamma_{xx}^+(\cdot, P) = \frac{3-4\nu}{4(1-2\nu)}B_x -
   \frac{1}{4(1-\nu)}
   \left (
   \frac{\partial \beta}{\partial x}
   + x \frac{\partial B_x}{\partial x} +
   z \frac{\partial B_z}{\partial x}
   \right ),
\end{equation}
\begin{equation}
  \label{eq:78.7}
   \Gamma^+(\cdot, P) e_x \cdot e_y=\Gamma_{yx}^+(\cdot, P) = - \frac{1}{4(1-\nu)}
   \left (
   \frac{\partial \beta}{\partial y} + x \frac{\partial B_x}{\partial
   y}+ z \frac{\partial B_z}{\partial y}
   \right ),
\end{equation}
\begin{equation}
  \label{eq:78.8}
   \Gamma^+(\cdot, P) e_x \cdot e_z= \Gamma_{zx}^+(\cdot, P) = \frac{3-4\nu}{4(1-\nu)}B_z -
   \frac{1}{4(1-\nu)}\left (
   \frac{\partial \beta}{\partial z} + x \frac{\partial B_x}{\partial
   z} + z \frac{\partial B_z}{\partial z}
   \right ),
\end{equation}
where
\medskip

\textit{for $z>0$}:

\begin{equation}
  \label{eq:79.1}
   B_x = \frac{1}{4\pi \mu}
   \left (
   \frac{1}{R_1} + \frac{1 - \frac{\mu^I}{\mu}  }{ 1+    \frac{\mu^I}{\mu}
   } \frac{1}{R_2}
   \right ),
\end{equation}
\begin{equation}
  \label{eq:79.2}
   B_z = \frac{\mu-\mu^I}{2\pi(\mu+\mu^I(3-4\nu))}
   \left (
   - \frac{cx}{\mu R_2^3} +
   \frac{(1-2\nu)x}{ (\mu+\mu^I)R_2(R_2+z+c)  }
   \right ),
\end{equation}
\begin{equation}
  \label{eq:79.3}
   \beta =
   \frac{1}{ 2\pi(\mu+\mu^I)(\mu+\mu^I(3-4\nu))  }
   \left (
    \frac{(1-2\nu)(\mu-\mu^I)cx}{R_2(R_2+z+c)}
    +
    A^* \frac{x}{R_2+z+c}
    \right ),
\end{equation}
with
\begin{multline}
  \label{eq:79.4}
     A^* =   \left \{
    (\mu-\mu^I)(1-2\nu)
    \left [
    \mu^I (3-4\nu)(1-2\nu^I) - \mu(3-4\nu^I)(1-2\nu)
    \right ] - \right.
    \\
    \left. - 2\mu^I (\nu - \nu^I) (\mu + \mu^I(3-4\nu))
    \right \} \cdot \frac{1}{ \mu^I + \mu(3-4\nu^I)   } \ ;
\end{multline}
\medskip

\textit{for $z<0$}:
\begin{equation}
  \label{eq:79.5}
     B_x = \frac{1}{2\pi\mu \left ( 1 +  \frac{\mu^I}{\mu}\right )
     } \cdot \frac{1}{R_1},
\end{equation}
\begin{equation}
  \label{eq:79.6}
     B_z =
     \frac{ (1-2\nu^I)(\mu-\mu^I)     }{ 2\pi(\mu+\mu^I)(\mu^I+\mu(3-4\nu^I))   }
     \cdot \frac{x}{R_1(R_1 -z +c)},
\end{equation}
\begin{multline}
  \label{eq:80.1}
     \beta = \frac{1-\nu^I}{ 2\pi(1-\nu)(\mu+\mu^I)(\mu+\mu^I(3-4\nu))
     }\cdot
     \\
     \cdot
     \left \{
     \left [
     (1-2\nu)(\mu-\mu^I) + \frac{(\nu-\nu^I)(\mu+\mu^I(3-4\nu))}{1-\nu^I}
     \right ]
     \frac{cx}{R_1(R_1-z+c)}
     \right. + \\
     \left.
     +\left [
     A^* +
     \frac{(\nu-\nu^I)(\mu+\mu^I(3-4\nu))}{1-\nu^I}
     \right ] \cdot \frac{x}{R_1-z+c}
     \right \}.
\end{multline}
In order to adapt these results to our notation, we find
convenient to introduce the following change of the coordinate
system:
\begin{center}
\( {\displaystyle \left\{
\begin{array}{lr}
  e_1 = e_y,
    \vspace{0.25em}\\
  e_2 = e_x,
    \vspace{0.25em}\\
  e_3 = -e_z,
\end{array}
\right. } \) \vskip -2.4em
\begin{eqnarray}
& & \label{eq:80.2}
\end{eqnarray}
\end{center}
associated to the rotation
\begin{equation}
    \label{eq:80.3}
    R=
    \begin{bmatrix}
    0       &       1 &     0 \vspace{2mm} \\
    1       &       0 &     0 \vspace{2mm} \\
    0       &       0 &    -1
    \end{bmatrix}.
\end{equation}
Then, we have
\begin{equation}
    \label{eq:80.4}
    \begin{bmatrix}
    \Gamma_{11}^+       &       \Gamma_{12}^+ &     \Gamma_{13}^+ \vspace{2mm} \\
    \Gamma_{21}^+       &       \Gamma_{22}^+ &     \Gamma_{23}^+ \vspace{2mm} \\
    \Gamma_{31}^+       &       \Gamma_{32}^+ &     \Gamma_{33}^+
    \end{bmatrix}
    =
    \begin{bmatrix}
    \Gamma_{yy}^+       &       \Gamma_{yx}^+ &     -\Gamma_{yz}^+ \vspace{2mm} \\
    \Gamma_{xy}^+       &       \Gamma_{xx}^+ &     -\Gamma_{xz}^+ \vspace{2mm} \\
    -\Gamma_{zy}^+       &       -\Gamma_{zx}^+ &    \Gamma_{zz}^+
    \end{bmatrix}.
\end{equation}
and a relationship analogous to \eqref{eq:80.4} holds for the
Kelvin fundamental matrix $\Gamma$.

Let us analyze the main cases.

\medskip

i) $m=e_3$, $l=e_3$.

In this case, by \eqref{eq:77.3}--\eqref{eq:77.5}, \eqref{eq:57.2}, and denoting  $Q=(0,0,1)=-y_0$,
we have
\begin{equation}
  \label{eq:82.3}
    \Gamma_{33}^+ (y_0, w_0) - \Gamma_{33}(y_0, w_0) =
\Gamma_{zz}^+(Q,P)-\Gamma_{zz}(Q,P)) = \frac{1}{4\pi\mu(1-\nu)}
\cdot \frac{{\mathcal{P}}(t)}{t^3(\mu+\mu^I(3-4\nu))},
\end{equation}
where
\begin{equation}
  \label{eq:82.4}
    \mathcal{P}(t) = (1-\nu) \left [
    (\mu-\mu^I)(3-4\nu)-\gamma \mu
    \right ]t^2 + (\mu^I-\mu)(t-1),
\end{equation}
with $t=1+c$, $1<t<2$, and
\begin{equation}
  \label{eq:82.5}
    \gamma =
    \frac{\mu(1-2\nu)(3-4\nu^I)-\mu^I(1-2\nu^I)(3-4\nu)}{\mu^I+\mu(3-4\nu^I)}.
\end{equation}
We note that $\mathcal{P}$ is a second degree polynomial

\begin{equation}
  \label{eq:82.6}
    \mathcal{P}(t)= \alpha t^2+\beta t + \gamma,
\end{equation}
whose coefficients can be estimated as follows
\begin{equation}
  \label{eq:82.7}
    \alpha^2+\beta^2 + \gamma^2= (A\delta\mu + B\delta\nu)^2 + C^2(\delta\mu)^2,
\end{equation}
where we have denoted
\begin{equation}
  \label{eq:82.8}
     \delta\mu =\mu-\mu^I, \qquad  \delta\nu =\nu-\nu^I,
\end{equation}
and the quantities $A$, $B$, $C$ satisfy
\begin{equation}
  \label{eq:82.9}
     C^2,B^2\geq \frac{1}{K},\qquad A^2\leq K,
\end{equation}
where $K>0$ only depends on the a-priori data.

Furthermore we observe that, being the space of real second degree polynomials a 3-dimensional
linear space, for any three distinct values $t_1$, $t_2$, $t_3\in \R$ we have
\begin{equation}
  \label{eq:82.10}
     (\mathcal{P}(t_1))^2+(\mathcal{P}(t_2))^2+(\mathcal{P}(t_3))^2\geq
     C(\alpha^2+\beta^2 + \gamma^2),
\end{equation}
where $C>0$ is a computable quantity only depending on $t_1$, $t_2$, $t_3$.

Thus, in view of \eqref{eq:3.1bis}, we obtain
\begin{equation}
  \label{eq:82.11}
     (\mathcal{P}(t_1))^2+(\mathcal{P}(t_2))^2+(\mathcal{P}(t_3))^2\geq
     Q^2>0,
\end{equation}
where $Q>0$ only depends on $t_1$, $t_2$, $t_3$ and on the a-priori data.

In conclusion, picking any three distinct values $c_1$, $c_2$,
$c_3\in [\frac{2}{3},\frac{4}{5}]$ (for the sake of concreteness
we may choose $c_1=\frac{2}{3}$, $c_2=\frac{3}{4}$,
$c_3=\frac{4}{5}$), we obtain that there exists $i\in\{1,2,3\}$
such that $w_0=(0,0,-c_i)$ satisfies
\begin{equation}
  \label{eq:82.12}
    |\Gamma_{33}^+ (y_0, w_0) - \Gamma_{33}(y_0, w_0)|\geq C>0,
\end{equation}
where $C$ only depends on the a-priori data.

\medskip

ii) $m=l=e_2$.

By \eqref{eq:80.4} and the analogous for $\Gamma$, by
\eqref{eq:78.6}, \eqref{eq:79.1}--\eqref{eq:79.4} we have
\begin{equation}
  \label{eq:83.5}
    \Gamma_{22}^+ (y_0, w_0) - \Gamma_{22}(y_0, w_0) =
\Gamma_{xx}^+(Q,P)-\Gamma_{xx}(Q,P)) = \frac{1}{4(1-\nu)} \cdot
\frac{{\mathcal{P}}(t)}{t^3},
\end{equation}
where
\begin{multline}
  \label{eq:84.1}
     \mathcal{P}(t) = \frac{3-4\nu}{4\pi \mu} \cdot
     \frac{\mu-\mu^D}{\mu+\mu^D}\cdot t^2 -
     \frac{(1-2\nu)(\mu-\mu^D) }{ 4\pi(\mu+\mu^D)(\mu+\mu^D(3-4\nu))
     }\cdot t^2 -\\
     - \frac{A^*}{4\pi(\mu+\mu^D)(\mu+\mu^D(3-4\nu))}\cdot t^2 +
     \frac{\mu-\mu^D}{2\pi \mu(\mu + \mu^D(3-4\nu))  }\cdot (t-1)
\end{multline}
and
\begin{multline}
  \label{eq:84.2}
     A^* =   \left \{
    (\mu-\mu^D)(1-2\nu)
    \left [
    \mu^D (3-4\nu)(1-2\nu^D) - \mu(3-4\nu^D)(1-2\nu)
    \right ] - \right.
    \\
    \left. - 2\mu^D (\nu - \nu^D) (\mu + \mu^D(3-4\nu))
    \right \} \cdot \frac{1}{ \mu^D + \mu(3-4\nu^D)   } \ .
\end{multline}
An inspection of the polynomial $\mathcal{P}$ analogous to the one performed above leads again
to the conclusion that, picking $c_1=\frac{2}{3}$, $c_2=\frac{3}{4}$, $c_3=\frac{4}{5}$,
there exists $i\in\{1,2,3\}$ such that $w_0=(0,0,-c_i)$ satisfies
\begin{equation}
  \label{eq:84.2bis}
    |\Gamma_{22}^+ (y_0, w_0) - \Gamma_{22}(y_0, w_0)|\geq C>0,
\end{equation}
where $C$ only depends on the a-priori data.

A similar result holds when $m=l=e_1$, namely
\begin{equation}
  \label{eq:84.2ter}
    |\Gamma_{11}^+ (y_0, w_0) - \Gamma_{11}(y_0, w_0)|\geq C>0,
\end{equation}
where $C$ only depends on the a-priori data.

\begin{proof}[Proof of Proposition \ref{prop:62.1}]
This is an immediate consequence of \eqref{eq:82.12}, \eqref{eq:84.2bis} and \eqref{eq:84.2ter}.
\end{proof}
\medskip

\begin{rem}
Observe that
\begin{equation}
  \label{eq:84.5}
    \Gamma_{ij}^+(y_0,w_0)=\Gamma_{ij}(y_0,w_0)=0, \quad i \neq j,
    \ i,j=1,2,3.
\end{equation}
Hence only diagonal terms of $\Gamma^+-\Gamma$ appear to be relevant.
\end{rem}

\subsection{Examples}

Let us conclude the present section with a remark emphasizing an
interesting difference with respect to the electrostatic case. For
the analogous inverse problem in electrostatics, which involves
the Laplace operator, {}from the explicit expression of $\Gamma^+$
given, for instance, by (3.11) in \cite{ADiC05}, it easy to see
that
\begin{equation}
  \label{eq:85.1}
    (\Gamma^+ - \Gamma)(y_0,y_0) \neq 0, \quad \hbox{for } \
    y_0=(0,0,-1),
\end{equation}
for any choice of different constant values of the conductivity
within the inclusion $D$ and in $\Omega \setminus D$. On the
contrary, an analogous result does not hold in the isotropic
elastic case. Since for current materials, the Poisson coefficient
takes positive value, let us restrict our analysis to the cases in
which $0<\nu< \frac{1}{2}$ and $0<\nu^I< \frac{1}{2}$.

In case $m=l=e_3$,
taking the limit in \eqref{eq:82.3} as $w_0$ tends to $y_0$, that
is choosing $t=2$, one finds
\begin{equation}
  \label{eq:85.9}
    (\Gamma_{33}^+  - \Gamma_{33})(y_0, y_0) = \frac{Q(\mu,\nu,\mu^I,\nu^I)}{32\pi\mu(1-\nu)(\mu+\mu^I(3-4\nu))(\mu^I+\mu(3-4\nu^I))},
\end{equation}
\begin{multline}
  \label{eq:85.10}
    Q(\mu,\nu,\mu^I,\nu^I)= 32  \mu^{2} \nu^{2} \nu^I - 32  \mu \mu^I \nu^{2} \nu^I - 24  \mu^{2} \nu^{2} - 64
 \mu^{2} \nu \nu^I + \\
 +16  \mu \mu^I \nu^{2} + 56  \mu \mu^I \nu \nu^I + 16  (\mu^I)^{2} \nu^{2} + 48
 \mu^{2} \nu + 28  \mu^{2} \nu^I - 28  \mu \mu^I \nu -\\
 - 20  \mu \mu^I \nu^I - 28  (\mu^I)^{2} \nu -
21  \mu^{2} + 10  \mu \mu^I  + 11  (\mu^I)^{2}.
\end{multline}
The polynomial $Q$ is homogeneous of degree $2$ in $\mu$ and
$\mu^I$, and of degree $1$ in $\nu^I$. Setting $s =
\frac{\mu}{\mu^I}$ and dividing by $(\mu^I)^2$, we obtain
\begin{equation}
  \label{eq:85.11}
    \frac{Q(\mu,\nu,\mu^I,\nu^I)}{(\mu^I)^2}
    = Q_2(\nu,\nu^I,s),
\end{equation}
where
\begin{multline}
  \label{eq:85.12}
    Q_2(\nu,\nu^I,s)= 32  \nu^{2} \nu^I s^{2} - 32  \nu^{2} \nu^I s - 24  \nu^{2} s^{2} - 64  \nu \nu^I
s^{2} + 16  \nu^{2} s + 56  \nu \nu^I s + \\
+ 48  \nu s^{2} + 28  \nu^I s^{2} + 16
 \nu^{2} - 28  \nu s - 20  \nu^I s - 21  s^{2} - 28  \nu + 10  s + 11.
\end{multline}
Solving $Q_2=0$ with respect to $\nu^I$, we have
\begin{equation}
  \label{eq:85.13}
    \nu^I=\frac{3  {\left(8  \nu^{2} - 16  \nu + 7\right)} s^{2} - 2  {\left(8
 \nu^{2} - 14  \nu + 5\right)} s - 16  \nu^{2} + 28  \nu - 11}{4
{\left[{\left(8  \nu^{2} - 16  \nu + 7\right)} s^{2} - {\left(8  \nu^{2}
- 14  \nu + 5\right)} s\right]}}
\end{equation}
{}From \eqref{eq:85.13} it is possible to determine triples of values $(\nu,\nu^I, s)$
satisfying \eqref{eq:85.13} and such that $0< \nu< \frac{1}{2}$, $0< \nu^I< \frac{1}{2}$,
$0<s\neq 1$, for instance $(\frac{1}{8}, \frac{17}{36}, \frac{6}{5})$, $(\frac{1}{4}, \frac{661}{1628}, \frac{11}{10})$, $(\frac{3}{8}, \frac{17}{36}, \frac{11}{10})$. Therefore there exist infinitely many
pairs of materials $\{\mu,\nu\}$, $\{\mu^I,\nu^I\}$ such that
$(\Gamma_{33}^+-\Gamma_{33})(y_0,y_0) = 0$.

\begin{figure}
\begin{center}
\resizebox{6cm}{!}{
\includegraphics{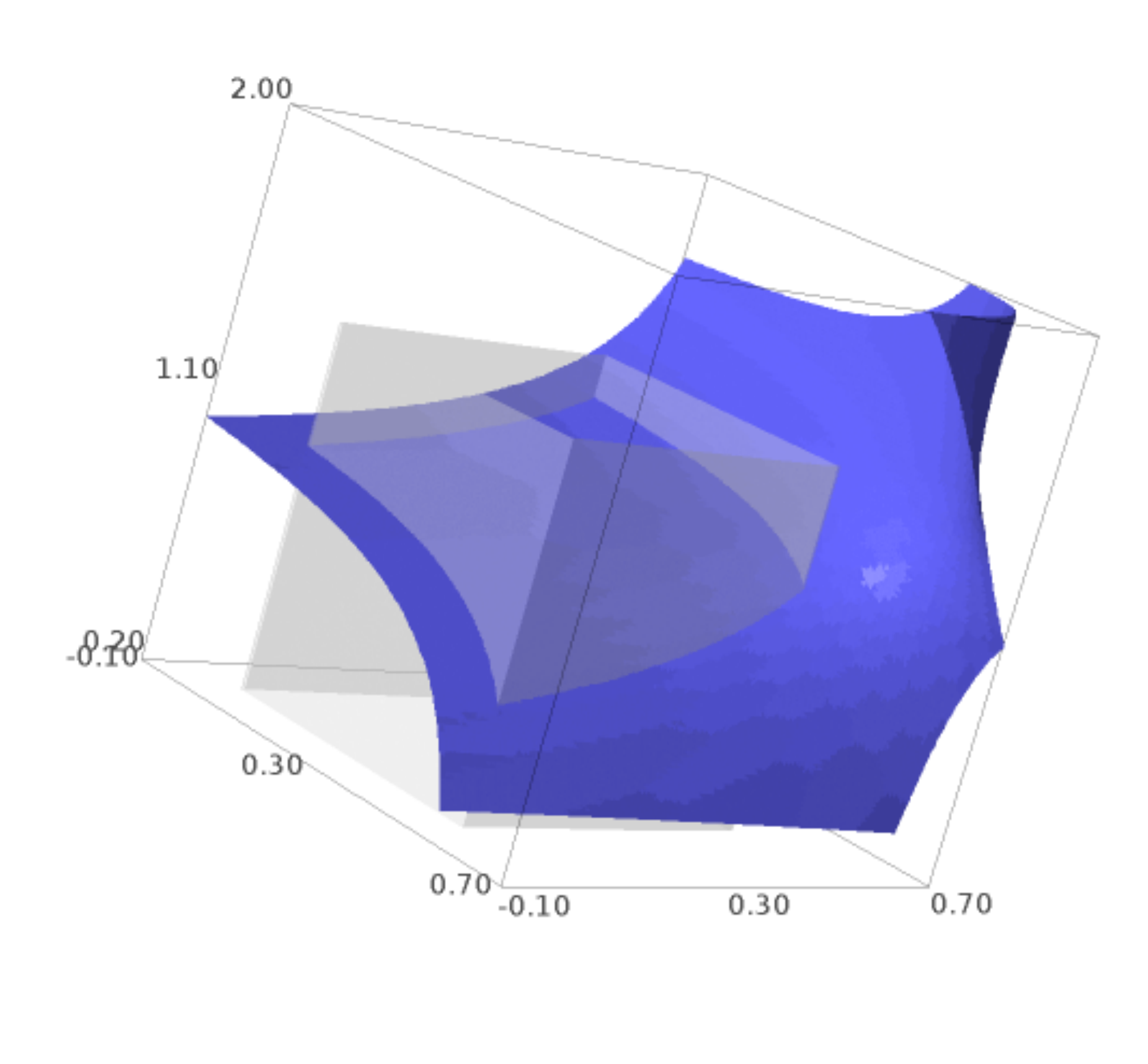}
}
\caption{Case $m=l=e_3$: Intersection of the surface $Q_2=0$ with
the set  $\{(\nu,\nu^I, s)\ |\ 0< \nu< \frac{1}{2}, 0< \nu^I< \frac{1}{2},
0<s<2\}$\label{figura3}}
\end{center}
\end{figure}

Figure \ref{figura3} shows the intersection of the surface $Q_2=0$ with
the set  $\{(\nu,\nu^I, s)\ |\ 0< \nu< \frac{1}{2}, 0< \nu^I< \frac{1}{2},
0<s<2\}$. It is evident {}from this graph that for each couple $(\nu,\nu^I)$ of Poisson
coefficients such that $0< \nu< \frac{1}{2}, 0< \nu^I< \frac{1}{2}$, there exists a positive value of
$s$ such that $Q_2(\nu,\nu^I, s)=0$.

Moreover, substituting $s=1$ in the expression of $Q_2$, one finds
\begin{equation}
\label{eq:85.14}
    Q_2(\nu,\nu^I, 1)= 8(\nu-1)(\nu-\nu^I),
\end{equation}
which has no zero when $\nu\neq \nu^I$. This implies that for each couple $(\nu,\nu^I)$ of Poisson
coefficients such that $0< \nu< \frac{1}{2}, 0< \nu^I< \frac{1}{2}$, $\nu\neq \nu^I$, there exists $\overline{s}$,
$0<\overline{s}\neq 1$ such that $Q_2(\nu,\nu^I, \overline{s})=0$, that is $Q_2(\nu,\nu^I, \overline{s}\mu^I, \mu^I)=0$ for any $\mu^I>0$.
Moreover, {}from \eqref{eq:85.14} it follows that if $\mu=\mu^I$, then for any choice of the Poisson
coefficients, such that $\nu\neq\nu^I$, then $Q_2\neq 0$ and therefore $(\Gamma_{33}^+-\Gamma_{33})(y_0,y_0) \neq 0$.

\begin{figure}
\begin{center}
\resizebox{6cm}{!}{
\includegraphics{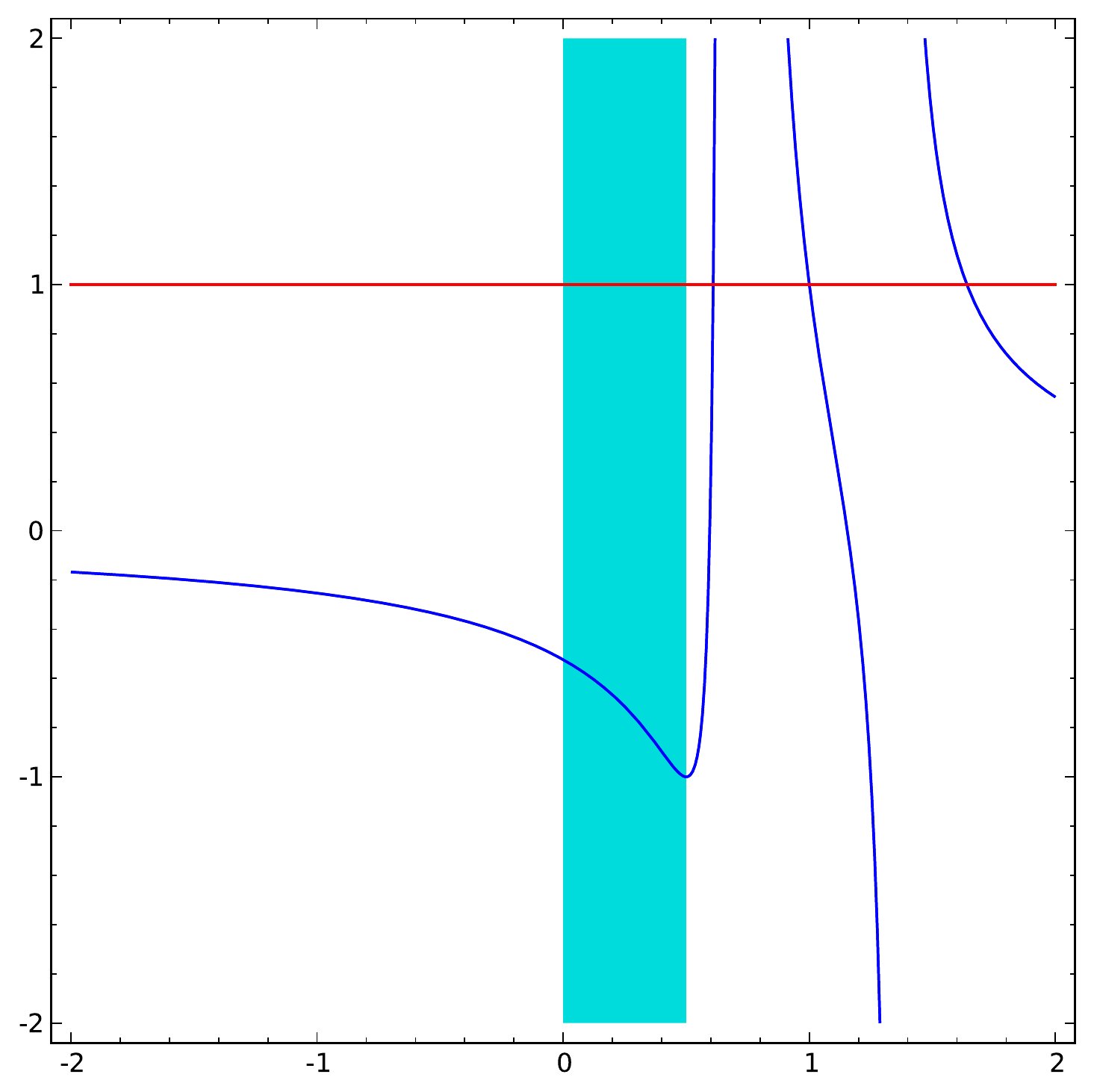}
} \caption{Case $m=l=e_3$: Representation of the curve $Q_2(\nu,\nu, s)=0$ for
$\nu\in [-2,2]$, $s\in[-2,2]$\label{figura4}}
\end{center}
\end{figure}

Next, putting $\nu^I=\nu$ in the expression of $Q_2$, one finds
\begin{multline}
\label{eq:85.15}
    Q_2(\nu,\nu, s)= (s-1)\cdot
    [(32 \nu^{3} -88 \nu^{2} +76\nu - 21)s +(-16 \nu^{2} + 28 \nu -11)],
\end{multline}
that is the intersection of the surface $Q_2=0$ with the plane
$\nu=\nu^I$, when represented in the plane $(\nu^I,s)$, splits in
the line $s=1$ and in an algebraic curve of degree 3. Figure
\ref{figura4}, which contains the graph of this curve and of the
line $s=1$, shows that in our set of interest,
$0<\nu<\frac{1}{2}$, the only solution is $s=1$, that is if the
Poisson coefficients coincide, but $\mu\neq \mu^I$ (that is $s\neq
1$), then $(\Gamma_{33}^+-\Gamma_{33})(y_0,y_0) \neq 0$.

\medskip

In case $m=l=e_2$ and
taking the limit in the complete expression of \eqref{eq:83.5} as $w_0$ tends to $y_0$, that
is choosing $t=2$, one finds
\begin{equation}
  \label{eq:85.2}
    (\Gamma_{22}^+  - \Gamma_{22})(y_0, y_0) = \frac{1}{16(1-\nu)} \cdot
    \frac{Q(\mu,\nu,\mu^I,\nu^I)}{R(\mu,\nu,\mu^I,\nu^I)},
\end{equation}
where $R(\mu,\nu,\mu^I,\nu^I)\neq 0$ for $\mu>0$, $\mu^I>0$, and
\begin{multline}
  \label{eq:85.3}
    Q(\mu,\nu,\mu^I,\nu^I)= 32 \mu^3 \nu^2 \nu^I + 64 \mu^2 \mu^I \nu^2
    \nu^I- 96 \mu (\mu^I)^2 \nu^2 \nu^I - 24 \mu^3 \nu^2 -48 \mu^3
    \nu \nu^I - \\
    - 56 \mu^2 \mu^I \nu^2 -104 \mu^2 \mu^I \nu \nu^I + 64 \mu
    (\mu^I)^2 \nu^2 + 136 \mu (\mu^I)^2 \nu \nu^I + 32 (\mu^I)^3
    \nu^2 + 36 \mu^3 \nu + \\
    + 28 \mu^3 \nu^I + 88 \mu^2\mu^I \nu + 40 \mu^2 \mu^I \nu^I
    - 92 \mu (\mu^I)^2\nu - 52 \mu (\mu^I)^2 \nu^I - \\
    - 48 (\mu^I)^3 \nu - 21 \mu^3 -35 \mu^2 \mu^I + 37 \mu
    (\mu^I)^2 + 19 (\mu^I)^3.
\end{multline}
The polynomial $Q$ is homogeneous of degree $3$ in $\mu$ and
$\mu^I$, and of degree $1$ in $\nu^I$. Setting $s =
\frac{\mu}{\mu^I}$ and dividing by $(\mu^I)^3$, we obtain
\begin{equation}
  \label{eq:85.4}
    \frac{Q(\mu,\nu,\mu^I,\nu^I)}{(\mu^I)^3}
    = Q_2(\nu,\nu^I,s),
\end{equation}
where
\begin{multline}
  \label{eq:85.5}
    Q_2(\nu,\nu^I,s)= 32\nu^{2}\nu^Is^{3} + 64\nu^{2}\nu^I s^{2} - 24 \nu^{2} s^{3} - 48 \nu
\nu^I s^{3} - 96 \nu^{2} \nu^I s - 56\nu^{2} s^{2} -\\
- 104 \nu \nu^I s^{2}
+ 36 \nu s^{3} + 28 \nu^I s^{3} + 64 \nu^{2} s
+ 136 \nu \nu^I s + 88 \nu s^{2} +\\
+40 \nu^I s^{2} + 32 \nu^{2} - 21 s^{3} - 92 \nu s
- 52 \nu^I s - 35s^{2} - 48 \nu + 37 s + 19.
\end{multline}
Solving $Q_2=0$ with respect to $\nu^I$, we have
\begin{equation}
  \label{eq:85.6}
    \nu^I=
    \frac{N^D}{D^D},
\end{equation}
where
\begin{multline}
  \label{eq:85.6BIS}
    N^D=
   3 {\left(8 \nu^{2} - 12 \nu + 7\right)} s^{3} + {\left(56
\nu^{2} - 88 \nu + 35\right)} s^{2} -\\
- {\left(64 \nu^{2} - 92 \nu +
37\right)} s
- 32 \nu^{2} + 48 \nu - 19,
\end{multline}
\begin{equation}
  \label{eq:85.6TER}
    D^D=
    4 \left({\left(8 \nu^{2}
- 12 \nu + 7\right)} s^{3} + 2{\left(8 \nu^{2} - 13 \nu +
5\right)} s^{2} - {\left(24 \nu^{2} - 34 \nu + 13\right)} s\right).
\end{equation}

{}From this expression of $\nu^I$ it is possible to determine triples of values $(\nu,\nu^I, s)$
satisfying \eqref{eq:85.6} and such that $0< \nu< \frac{1}{2}$, $0< \nu^I< \frac{1}{2}$,
$0<s\neq 1$, for instance $(\frac{1}{5},\frac{331}{663},\frac{17}{15})$, $(\frac{1}{4},\frac{1951}{47348},\frac{19}{20})$, $(\frac{7}{20},\frac{317}{1596},\frac{19}{20})$. Therefore there exist infinitely many
pairs of materials $\{\mu,\nu\}$, $\{\mu^I,\nu^I\}$ such that
$(\Gamma_{22}^+-\Gamma_{22})(y_0,y_0) = 0$.

\begin{figure}
\begin{center}
\resizebox{6cm}{!}{
\includegraphics{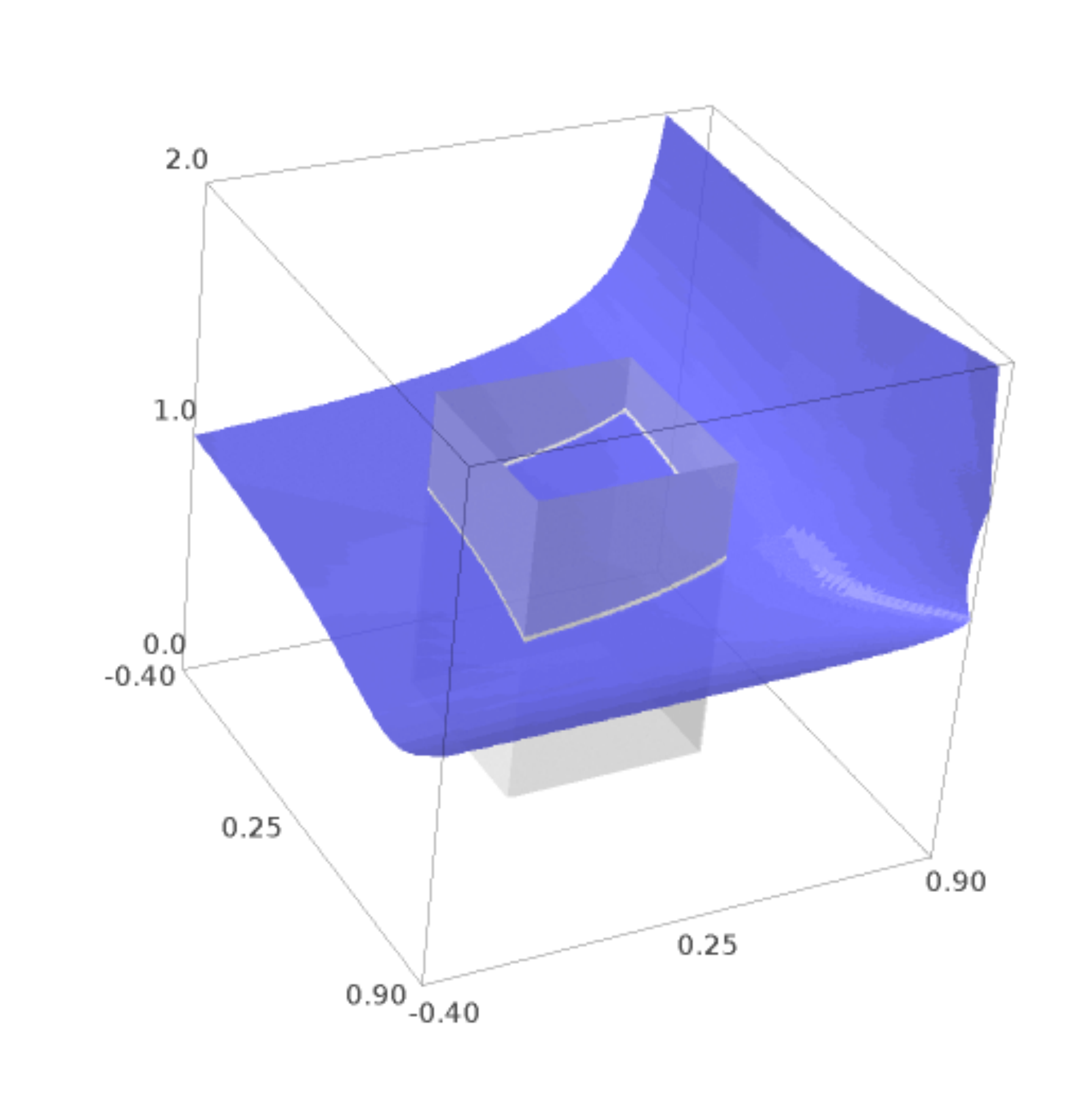}
}
\caption{Case $m=l=e_2$: Intersection of the surface $Q_2=0$ with
the set  $\{(\nu,\nu^I, s)\ |\ 0< \nu< \frac{1}{2}, 0< \nu^I< \frac{1}{2},
0<s<2\}$\label{figura1}}
\end{center}
\end{figure}

Figure \ref{figura1} shows the intersection of the surface $Q_2=0$ with
the set  $\{(\nu,\nu^I, s)\ |\ 0< \nu< \frac{1}{2}, 0< \nu^I< \frac{1}{2},
0<s<2\}$. It is evident {}from this graph that for each couple $(\nu,\nu^I)$ of Poisson
coefficients such that $0< \nu< \frac{1}{2}, 0< \nu^I< \frac{1}{2}$, there exists a positive value of
$s$ such that $Q_2(\nu,\nu^I, s)=0$.

Moreover, substituting $s=1$ in the expression of $Q_2$, one finds
\begin{equation}
\label{eq:85.7}
    Q_2(\nu,\nu^I, 1)= 16(\nu-1)(\nu-\nu^I),
\end{equation}
which has no zero when $\nu\neq \nu^I$. This implies that for each couple $(\nu,\nu^I)$ of Poisson
coefficients such that $0< \nu< \frac{1}{2}, 0< \nu^I< \frac{1}{2}$, $\nu\neq \nu^I$, there exists $\overline{s}$,
$0<\overline{s}\neq 1$ such that $Q_2(\nu,\nu^I, \overline{s})=0$, that is $Q_2(\nu,\nu^I, \overline{s}\mu^I, \mu^I)=0$ for any $\mu^I>0$.
Moreover, {}from \eqref{eq:85.7} it follows that if $\mu=\mu^I$, then for any choice of the Poisson
coefficients, such that $\nu\neq\nu^I$, then $Q_2\neq 0$ and therefore $(\Gamma_{22}^+-\Gamma_{22})(y_0,y_0) \neq 0$.

\begin{figure}
\begin{center}
\resizebox{6cm}{!}{
\includegraphics{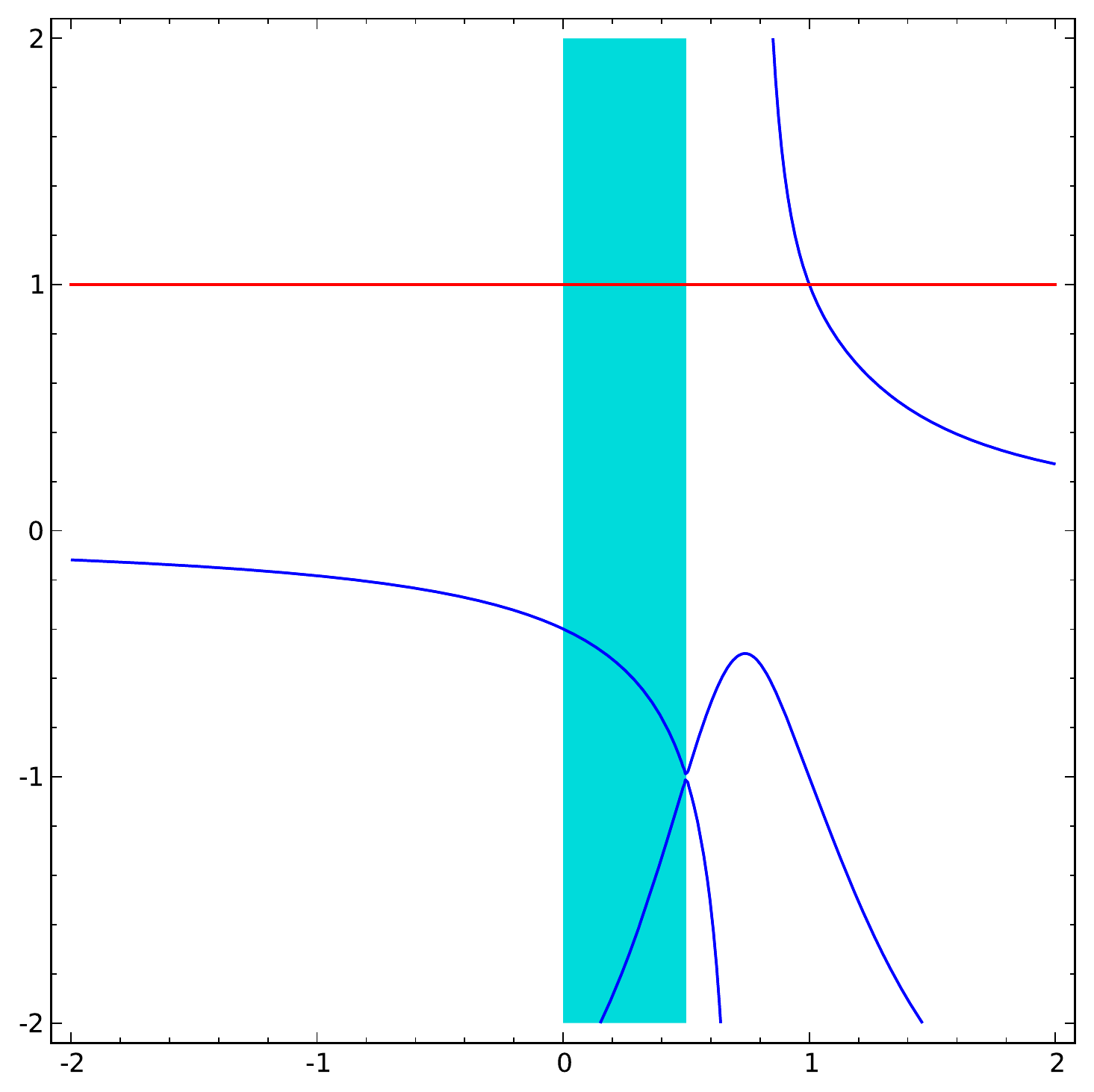}
} \caption{Case $m=l=e_2$: Representation of the curve $Q_2(\nu,\nu, s)=0$ for
$\nu\in [-2,2]$, $s\in[-2,2]$\label{figura2}}
\end{center}
\end{figure}

Next, putting $\nu^I=\nu$ in the expression of $Q_2$, one finds
\begin{multline}
\label{eq:85.8}
    Q_2(\nu,\nu, s)= (s-1)\cdot
    [32 \nu^{3} s^{2} + 96 \nu^{3} s - 72 \nu^{2} s^{2}-\\
    -232 \nu^{2} s + 64 \nu s^{2} - 32 \nu^{2} + 192 \nu s
- 21 s^{2} + 48 \nu - 56 s - 19],
\end{multline}
that is the intersection of the surface $Q_2=0$ with the plane
$\nu=\nu^I$, when represented in the plane $(\nu,s)$, splits in
the line $s=1$ and in an algebraic curve of degree 5. Figure
\ref{figura2}, which contains the graph of this curve and of the
line $s=1$, shows that in our set of interest,
$0<\nu<\frac{1}{2}$, the only solution is $s=1$, that is if the
Poisson coefficients coincide, but $\mu\neq \mu^I$ (that is $s\neq
1$), then $(\Gamma_{22}^+-\Gamma_{22})(y_0,y_0) \neq 0$.

Analogous considerations hold, for symmetry evidence, when studying  $(\Gamma_{11}^+-\Gamma_{11})(y_0,y_0)$.

\section{Metric lemmas, proofs}
\label{technical}

In order to prove Lemma \ref{lem:8.1}, we shall use the following
results.
\begin{lem}
   \label{lem:9.1} [Lemma 5.5 in \cite{ARRV09}]
Let $U$ be a Lipschitz domain in $\R^3$ with constants $\rho_0$,
$M_0$. There exists $h_0$, $0<h_0 <1$, only depending on $M_0$,
such that
\begin{equation}
  \label{eq:9.1}
   U_{h\rho_0} \hbox{ is connected for every } h, \
   0<h\leq h_0.
\end{equation}
\end{lem}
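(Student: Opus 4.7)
The plan is to establish connectedness of $U_{h\rho_0}$ by constructing, for $h$ sufficiently small, a continuous inward deformation $F: \overline{U} \times [0,1] \to \overline{U}$ that sends every point of $U$ inside $U_{h\rho_0}$ at time $1$, and then transfer connectedness from $U$ (which is a domain, hence connected) to $U_{h\rho_0}$ via $F(\cdot,1)$.

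The key geometric input I would exploit is the uniform interior cone property of Lipschitz domains: for every $P \in \partial U$ there is a rigid change of coordinates with $P=0$ and $\partial U \cap B_{\rho_0}$ equal to the graph $x_3 = \varphi(x')$, $\|\varphi\|_{\mathrm{Lip}}\leq M_0$, such that the truncated cone $C(0, -e_3, h_*, \theta_*)$ with aperture $\theta_* = \arctan(1/M_0)$ is contained in $\overline{U}$. This furnishes a canonical inward direction at each boundary point whose component along the outward normal of any nearby boundary point is at most $-\cos\theta_*$. Using this, I would build a Lipschitz unit vector field $V: \overline{U} \to \R^3$ such that, for every $x$ within distance $\rho_0/2$ of $\partial U$, the inner product $V(x) \cdot (-\nu(P)) \geq c_0(M_0) > 0$ for every $P \in \partial U$ with $|x-P| < \rho_0$. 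The construction proceeds by a Lipschitz partition of unity subordinate to a finite cover of $\partial U$ by flattening charts at scale $\rho_0$, taking $V$ in each chart to be the corresponding local inward cone axis; the convex combination retains a uniform inward component because local inward axes of overlapping charts make an angle at most $2\arctan M_0 < \pi$ with one another, controlled only by $M_0$.

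I then set $F(x,t) = x + t h \rho_0 V(x)$ and verify, by a direct application of the cone condition, that there exists $h_0 = h_0(M_0) \in (0,1)$ such that for every $h \leq h_0$ the map $F$ takes values in $\overline{U}$ and $F(\cdot, 1)$ sends $\overline{U}$ into $U_{h\rho_0}$: points at distance $<h\rho_0$ from $\partial U$ are pushed by at least $c_0 h\rho_0$ away from $\partial U$ while remaining in $U$ thanks to the cone inclusion, and points already in $U_{h\rho_0}$ only get deeper. Connectedness of $U_{h\rho_0}$ then follows immediately: given $p, q \in U_{h\rho_0}$, pick any continuous path $\gamma$ in $U$ joining them; the concatenated path $t \mapsto F(p,t)$, $s \mapsto F(\gamma(s), 1)$, $t \mapsto F(q, 1-t)$ stays entirely inside $U_{h\rho_0}$ because $\mathrm{dist}(F(x,t), \partial U)$ is nondecreasing in $t$ by the uniform inward character of $V$.

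The main obstacle will be the partition-of-unity construction producing $V$ with a \emph{uniform} inward component, since local cone axes from different charts are not literally parallel. The remedy is purely quantitative: the slope bound $M_0$ forces local inward axes of overlapping charts to differ in angle by at most $2\arctan M_0$, hence any convex combination retains a positive inner product (bounded below by $\cos(2\arctan M_0)$) with each of the constituent directions. Tracking these dependencies carefully is what fixes the admissible threshold $h_0$ appearing in the statement and makes it depend only on $M_0$, not on $\rho_0$ or on the particular domain.
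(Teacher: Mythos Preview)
The paper does not supply a proof of this lemma; it is quoted directly from \cite{ARRV09} (Lemma~5.5 there), so there is no in-paper argument to compare against. Your deformation-retract strategy via an inward-pointing vector field is a standard and correct route to the result.

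There is, however, a real slip in your final paragraph, precisely at the point you flag as the main obstacle. You argue that the convex combination $V(x)=\sum_i\lambda_i v_{P_i}$ of chart inward axes satisfies $V(x)\cdot v_{P_j}\geq\cos(2\arctan M_0)$ for each constituent $v_{P_j}$, and call this a positive lower bound. But $\cos(2\arctan M_0)=(1-M_0^2)/(1+M_0^2)$ is \emph{negative} once $M_0>1$; with two chart axes at angle close to $\pi$ and weights near $(0,1)$, the combination genuinely has negative component along $v_{P_1}$. So as written this step does not establish that $V$ points inward, and the monotonicity of $t\mapsto\mathrm{dist}(F(x,t),\partial U)$ is not justified.

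The fix is to test $V(x)$ not against the chart axes $v_{P_j}$ but against the inward normal $-\nu(P)$ at nearby boundary points $P$ (where it exists, i.e.\ a.e.). In the chart at $P_i$ the boundary is the graph of a function with slope $\leq M_0$, hence
\[
v_{P_i}\cdot(-\nu(P))=\frac{1}{\sqrt{1+|\nabla\varphi_{P_i}(p')|^2}}\geq\frac{1}{\sqrt{1+M_0^2}}
\]
for every $P=(p',\varphi_{P_i}(p'))$ in that chart. Choosing the partition of unity with supports of radius $\leq\rho_0/4$ ensures that every $P$ with $|P-x|<\rho_0/2$ lies in the chart of each $P_i$ contributing at $x$, whence $V(x)\cdot(-\nu(P))\geq 1/\sqrt{1+M_0^2}$ by convexity of the bound. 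With this correction (and cutting $V$ off to zero outside a $\rho_0/4$-neighbourhood of $\partial U$, so it need not be a unit field), your path-lifting argument goes through and yields $h_0$ depending only on $M_0$.
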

\begin{theo}
   \label{theo:9.2} [Theorem 3.6 in \cite{ABRV00}]
There exist positive constants $d_0$, $r_0$, $L_0$, $L_0 \leq
M_0$, with $ \frac{d_0}{\rho_0}$, $ \frac{r_0}{\rho_0}$ only
depending on $M_0$ and $L_0$ only depending on $\alpha$ and $M_0$,
such that if
\begin{equation}
  \label{eq:9.2}
   d_H(\partial D_1, \partial D_2) \leq d_0,
\end{equation}
then $\partial \Omega_D$ is Lipschitz with constants $r_0$ and
$L_0$. Moreover, for every $P \in \partial \Omega_D \cap \partial
D_1$, up to a rigid transformation of coordinates which maps $P$ into
the origin and $e_3=-\nu$, where $\nu$ is the outer unit normal to
$D_1$ at $P$, we have
\begin{equation}
  \label{eq:9BIS.1}
   D_i \cap B_{r_0}(P) = \left \{ x \in B_{r_0}(0) | \ x_3>
   \varphi_i(x')\right \}, \ \ i=1,2,
\end{equation}
\begin{equation}
  \label{eq:9BIS.2}
   \varphi_1(0)=0, \quad \nabla \varphi_1(0)=0,
\end{equation}
\begin{equation}
  \label{eq:9BIS.3}
   \| \varphi_i \|_{C^{0,1}(B_{r_0}'(0))} \leq L_0 r_0, \ \ i=1,2.
\end{equation}
An analogous representation holds for every $P \in \partial
\Omega_D \cap \partial D_2$.
\end{theo}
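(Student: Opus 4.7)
The plan is to exhibit $\partial\Omega_D$ locally as the graph of the pointwise minimum of two $C^{1,\alpha}$ graphs, and then exploit that the minimum of Lipschitz functions is Lipschitz with the same constant. Since $\partial\Omega_D \subset \partial D_1 \cup \partial D_2$, by symmetry I fix an arbitrary $P \in \partial\Omega_D \cap \partial D_1$ and choose coordinates so that $P=0$ and $e_3 = -\nu(P)$, where $\nu(P)$ is the outer unit normal to $D_1$ at $P$. Definition \ref{def:2.1} then gives $\varphi_1 \in C^{1,\alpha}(B'_{\rho_0})$ with $\varphi_1(0)=0$, $\nabla\varphi_1(0)=0$, $\|\varphi_1\|_{C^{1,\alpha}}\leq M_0\rho_0$, and $D_1\cap B_{\rho_0}(0) = \{x_3 > \varphi_1(x')\}\cap B_{\rho_0}(0)$. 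The normalized bound gives $\|\nabla\varphi_1\|_{L^\infty(B'_{r_0})}\leq M_0(r_0/\rho_0)^\alpha$, which is $\leq L_0/2$ once $r_0/\rho_0$ is chosen small enough depending only on $M_0$ and $\alpha$.

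The first (and main) step is to establish the analogous graph representation for $D_2$ in the same coordinates when $D_2\cap B_{r_0}(P)\neq\emptyset$. By Hausdorff closeness, there exists $P'\in\partial D_2$ with $|P-P'|\leq d_0$, and $D_2$ is locally a $C^{1,\alpha}$ graph over the tangent plane at $P'$. Let $\nu'=\nu_{D_2}(P')$. The crucial claim is a quantitative normal-alignment estimate: $|\nu-\nu'|\leq \omega(d_0/\rho_0)$ with $\omega$ a modulus depending only on $M_0$ and $\alpha$. To see this, for any unit $\tau\perp\nu$ the ray $t\mapsto P+t\tau$ satisfies $\mathrm{dist}(P+t\tau,\partial D_1)=O(t^{1+\alpha}\rho_0^{-\alpha})$ for $t\leq\rho_0$, and hence by Hausdorff closeness $\mathrm{dist}(P+t\tau,\partial D_2)\leq d_0+O(t^{1+\alpha}\rho_0^{-\alpha})$. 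On the other hand, the $C^{1,\alpha}$ graph expansion of $\partial D_2$ around $P'$ in the $\nu'$ direction forces $\mathrm{dist}(P+t\tau,\partial D_2)\geq |t(\tau\cdot\nu')|-C(d_0+t^{1+\alpha}\rho_0^{-\alpha})$; choosing $t$ to balance these contributions (e.g.\ $t\sim d_0^{1/(1+\alpha)}\rho_0^{\alpha/(1+\alpha)}$) yields $|\tau\cdot\nu'|\leq \omega(d_0/\rho_0)$ uniformly in $\tau$, hence $|\nu-\nu'|\leq\omega(d_0/\rho_0)$. With $d_0/\rho_0$ small enough, the implicit function theorem applied to the defining function of $D_2$ produces $\varphi_2\in C^{1,\alpha}(B'_{r_0})$ with $D_2\cap B_{r_0}(0)=\{x_3>\varphi_2(x')\}\cap B_{r_0}(0)$ and $\|\nabla\varphi_2\|_{L^\infty(B'_{r_0})}\leq L_0$ for an $L_0\leq M_0$ depending only on $M_0$ and $\alpha$.

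The second step is to identify $\Omega_D$ with $D_1\cup D_2$ locally. Since $P\in\partial\Omega_D$ is reachable from $\R^3\setminus\overline\Omega\subset\mathcal G$, and $r_0$ is chosen much smaller than $\rho_0$, any connected component of $B_{r_0}(0)\setminus\overline{D_1\cup D_2}$ touching $\partial B_{r_0}(0)$ from below (i.e.\ where $x_3<\min(\varphi_1,\varphi_2)$) is contained in $\mathcal G$, while any component lying above (where $x_3>\max(\varphi_1,\varphi_2)$) is inside $D_1\cup D_2$ and hence in $\Omega_D$. There is no room for a hidden pocket within $B_{r_0}(0)$ because the graph structure of both $D_i$ in this ball makes $B_{r_0}(0)\setminus\overline{D_1\cup D_2}$ a single graph region $\{x_3<\min(\varphi_1,\varphi_2)\}$. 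Therefore $\Omega_D\cap B_{r_0}(0)=\{x_3>\min(\varphi_1,\varphi_2)\}\cap B_{r_0}(0)$, and $\partial\Omega_D\cap B_{r_0}(0)$ is the graph of $\min(\varphi_1,\varphi_2)$, which is Lipschitz with constant $L_0$ since the minimum of two functions with $\|\nabla\varphi_i\|_{L^\infty}\leq L_0$ is Lipschitz with the same constant.

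The hard part is the quantitative normal-alignment estimate in the first step. Once this is established with constants depending only on $M_0$ and $\alpha$, everything else follows from elementary graph manipulations, the implicit function theorem, and the trivial Lipschitz bound for $\min$. The balancing argument sketched above is delicate because it must work uniformly over all tangent directions $\tau$ and for both cases $D_2\cap B_{r_0}(P)\neq\emptyset$ and the degenerate regime where $\varphi_1$ and $\varphi_2$ may touch tangentially; a robust formulation presumably uses the $C^{1,\alpha}$ rigidity to rule out the presence of any point of $\partial D_2$ in the forbidden wedge $\{|x_3|>(L_0/2)|x'|+\mathrm{const}\cdot d_0\}\cap B_{r_0}(0)$, which is the form actually needed downstream to conclude the graph bound.
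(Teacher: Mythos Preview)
The paper does not prove this statement at all: Theorem~\ref{theo:9.2} is quoted verbatim from \cite{ABRV00} (Theorem~3.6 there) and is used as a black box in the proof of Lemma~\ref{lem:8.1}. So there is no ``paper's own proof'' to compare against.

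That said, your outline is the right one and matches the strategy of \cite{ABRV00}: once both $\partial D_1$ and $\partial D_2$ are written, in the same frame, as Lipschitz graphs $x_3=\varphi_i(x')$ with $D_i=\{x_3>\varphi_i\}$, the identification $\partial\Omega_D\cap B_{r_0}=\{x_3=\min(\varphi_1,\varphi_2)\}$ and the Lipschitz bound for the pointwise minimum finish the job. Your argument that $\mathcal G\cap B_{r_0}=\{x_3<\min(\varphi_1,\varphi_2)\}$ is also correct once the graph representations are in hand, because that set is connected, contained in $\R^3\setminus\overline{D_1\cup D_2}$, and meets $\mathcal G$ (since $P\in\partial\Omega_D=\partial\mathcal G$).

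There is, however, a genuine gap in your first step. Your tangential probing argument (applying the Hausdorff bound along rays $P+t\tau$, $\tau\perp\nu$) only yields $|\tau\cdot\nu'|\le\omega(d_0/\rho_0)$ for all such $\tau$, i.e.\ that $\nu'$ is \emph{nearly parallel} to $\nu$; it does not distinguish $\nu'\approx\nu$ from $\nu'\approx-\nu$. You then invoke the implicit function theorem to write $D_2\cap B_{r_0}=\{x_3>\varphi_2(x')\}$, but the implicit function theorem only tells you $\partial D_2$ is a graph $x_3=\varphi_2(x')$; it does not decide on which side $D_2$ lies. In the anti-aligned scenario $D_2\cap B_{r_0}=\{x_3<\psi_2(x')\}$, the conclusions \eqref{eq:9BIS.1}--\eqref{eq:9BIS.3} would simply be false, so this case must be excluded by a separate argument. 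One clean way is global: by your alignment estimate and the connectedness of $\partial D_1$, the sign of $\nu(P)\cdot\nu'(P')$ is constant along $\partial D_1$; if it were negative everywhere, comparing the farthest points of $\overline{D_1}$ and $\overline{D_2}$ from a fixed origin and using the uniform interior cone condition (scale $\sim\rho_0$) forces $d_0\ge c(M_0)\rho_0$, contradicting the smallness of $d_0$. Alternatively, one can argue locally that in the anti-aligned case $P$ would lie in $\mathrm{int}(\overline{D_1\cup D_2})\subset\Omega_D$ whenever the ``sliver'' $\{\psi_2<x_3<\varphi_1\}$ degenerates, and otherwise the sliver is separated from $\R^3\setminus\overline\Omega$, again contradicting $P\in\partial\Omega_D$. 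Either way, this orientation step is missing from your write-up and should be made explicit.
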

\begin{proof} [Proof of Lemma \ref{lem:8.1}] Let
\begin{equation}
  \label{eq:10.1}
   d_1 = \frac{d_0}{c_0},
\end{equation}
where $c_0$ is the constant introduced in Lemma \ref{lem:6.1}, and
let
\begin{equation}
  \label{eq:10.2}
   d_2 = \min \{ d_1, h_0 \rho_0 \},
\end{equation}
where $h_0$, $0<h_0<1$, only depending on $M_0$, has been
introduced in Lemma \ref{lem:9.1}. We shall distinguish two cases.

\textit{Case i) Let $d_\mu \leq d_1$.}

Then, by Lemma
\ref{lem:6.1} we have $d_H(\partial D_1, \partial D_2)\leq d_0$.
Therefore, by Theorem \ref{theo:9.2}, $\partial \Omega_D$ is
Lipschitz with constants $r_0$, $L_0$, where $ \frac{r_0}{\rho_0}$
only depends on $M_0$, and $L_0$ only depends on $M_0$ and
$\alpha$. We may apply Lemma \ref{lem:9.1} to $\R^3 \setminus
\Omega_D$ obtaining that there exists $ \widetilde{h}_0$,
$0<\widetilde{h}_0<1$, only depending on $\alpha$ and $M_0$, such
that $(\R^3 \setminus \Omega_D)_{hr_0}$ is connected for every $h
\leq\widetilde{h}_0$.

Let $P \in \partial D_1 \cap \partial \Omega_D$ be such that
\begin{equation}
  \label{eq:10.3}
   d_\mu(D_1, D_2) =\hbox{dist}(P,
   D_2).
\end{equation}
Under the coordinate system introduced in Theorem \ref{theo:9.2},
let us consider the point $Q=P - \frac{\widetilde{h}_0
r_0}{2}e_3$. We have that
\begin{equation}
  \label{eq:11.1}
   \hbox{dist}(Q, \Omega_D) \geq \frac{\widetilde{h}_0
r_0}{2 \sqrt{1+L_0^2}  }.
\end{equation}
Let us denote $h_1 = \frac{\widetilde{h}_0}{2 \sqrt{1+L_0^2}  }$. Since $h_1 <
\widetilde{h}_0$, the set $\overline{(\R^3 \setminus
\Omega_D)_{h_1 r_0}}$ is
connected and contains $Q$. Therefore, there exists a path $\gamma
\subset \overline{(\R^3 \setminus
\Omega_D)_{h_1 r_0}}$ joining any point $P_0 \in S_{2\rho_0}$ with
$Q$. Therefore, in the above coordinate system, the set $V(\gamma)$ satisfies
\begin{equation}
  \label{eq:11.2}
   V(\gamma) \subset \R^3 \setminus \Omega_D,
\end{equation}
provided
\begin{equation}
  \label{eq:11.3}
   d= \frac{\widetilde{h}_0 r_0}{2}, \quad R=\frac{d}{\sqrt{1+L_0^2}
   }.
\end{equation}

\textit{Case ii) Let $d_\mu \geq d_1$.}

Then, trivially, $d_\mu
\geq d_2$. Let $ \widetilde{P} \in \partial D_1 \cap \partial
\Omega_D$ be such that
\begin{equation}
  \label{eq:12.1}
   d_\mu (D_1, D_2) = \hbox{dist}(\widetilde{P}, D_2).
\end{equation}
Since $d_2 \leq h_0 \rho_0$, by Lemma \ref{lem:9.1}, $(\R^3
\setminus D_2)_{d_2}$ is connected. Therefore, given any point
$P_0 \in S_{2\rho_0}$, there exists a path $\gamma$, $\gamma:[0,1]
\rightarrow (\R^3 \setminus D_2)_{d_2}$ such that $\gamma(0) \in
S_{2\rho_0}$ and $\gamma(1) = \widetilde{P}$. Let $ \overline{t} =
\inf_{t\in [0,1]} \left \{ t| \ {dist}(\gamma(t), \partial D_1)
> \frac{d_2}{2} \right \}$. By definition, $dist(\gamma(
\overline{t}), \partial D_1) = \frac{d_2}{2}$, so that there
exists $P \in \partial D_1$ satisfying $|P- \gamma (
\overline{t})|= \frac{d_2}{2}$. We have that
\begin{equation}
  \label{eq:12.2}
   \hbox{dist}(P,D_2) \geq \hbox{dist}( \gamma ( \overline{t}),
   D_2) - |\gamma ( \overline{t})-P| \geq d_2 - \frac{d_2}{2} =
   \frac{d_2}{2}.
\end{equation}
Let $ \overline{\gamma} = \gamma|_{[0, \overline{t}]}$ and let us
choose a cartesian coordinate system  with origin $O$ at $P$, and
$e_3=-\nu$, where $\nu$ is the outer unit normal to $D_1$ at $P$.
We have that
\begin{equation}
  \label{eq:12.3}
   V(\overline{\gamma}) \subset \R^3 \setminus \Omega_D,
\end{equation}
assuming
\begin{equation}
  \label{eq:12.4}
   d= \frac{d_2}{2}, \quad R=\frac{d}{\sqrt{1+M_0^2}
   }.
\end{equation}
Let
\begin{equation}
  \label{eq:12BIS.1}
   \overline{d}=\min \left \{ \frac{\widetilde{h}_0 r_0}{2}, \frac{d_0}{2c_0}, \frac{h_0
   \rho_0}{2}\right \},
\end{equation}
and let us notice that $ \frac{\overline{d}}{\rho_0}$ only depends on $M_0$,
$\alpha$.
Observing that $L_0 \leq M_0$, formula \eqref{eq:8BIS.4}
follows with $\overline{d}$ given in \eqref{eq:12BIS.1}.
Since there exists a positive constant $C$ only
depending on $M_0$, $M_1$ such that $diam(\Omega) \leq C\rho_0$,
we have that
\begin{equation}
  \label{eq:12BIS.2}
   d_\mu \leq \left ( \frac{\hbox{diam}(\Omega)    }{ \frac{d_2}{2}
   }\right ) \frac{d_2}{2} \leq \widetilde{c}_1 \frac{d_2}{2},
\end{equation}
with $\widetilde{c}_1$ only depending on $M_0$, $\alpha$ and
$M_1$. Letting $ c_1 = \min \left \{1, \frac{1}{\widetilde{c}_1}
\right \}$, inequality \eqref{eq:8BIS.2} follows.

\end{proof}
\bigskip

\textit{Acknowledgements}.
The collaboration of
Professor Alessandro Logar in preparing the numerical simulations
of the last section by means of the open source software package
Sage is gratefully acknowledged.

The second and the third author began to work on this topic during a visit at the Department of Mathematics of
Hokkaido University. They wish to thank Professor Gen Nakamura for
supporting their visit and for the warm hospitality in Sapporo.

\bigskip

\end{document}